\theoremstyle{plain}
\newtheorem{lem}{Lemma}
\newtheorem{thm}{Theorem}
\newtheorem{Def}{Definition}
\newtheorem{cor}{Corollary}
\newtheorem{con}{Construction}
\theoremstyle{definition}
\newtheorem{ex}{Example}
\newtheorem{rem}{Remark}
\def \v {\mathrm{v}}
\def \h {\mathrm{h}}
\def \cS {\mathcal{S}}
\def \cG {\mathcal{G}}
\def \cL {\mathcal{L}}
\def \cT {\mathcal{T}}
\def \cR {\mathcal{R}}
\def \cE {\mathcal{E}}
\def \cB {\mathcal{B}}
\def \cV {\mathcal{V}}
\def \cM {\mathcal{M}}
\def \cC {\mathcal{C}}
\def \E {\mathcal{E}^{\rm ex}}
\def \L {\mathcal{L}_{\rm plane}}
\def \mL {\widetilde{\mathcal{L}}_{\rm plane}}
\def \S {\mathcal{S}_t}
\def \mS {\widetilde{\mathcal{S}}_t}
\def \D {\mathcal{D}_t}
\def\be{\begin{equation}}
\def\ee{\end{equation}}
\author{Yevgeniy Kovchegov}
\address{Department of Mathematics, Oregon State University, Corvallis, OR, USA}
\email{kovchegy@math.oregonstate.edu}
\author{Ilya Zaliapin}
\address{Department of Mathematics and Statistics, University of Nevada, Reno, NV, USA}
\email{zal@unr.edu}
\title[Dynamical pruning of rooted trees]{Dynamical pruning of rooted trees with applications to 1D ballistic annihilation}
\begin{document}

\maketitle	

\begin{abstract}
We introduce {\it generalized dynamical pruning} on rooted binary trees with edge lengths.
The pruning removes parts of a tree $T$, starting from the leaves, according to a pruning function defined on subtrees within $T$. 
The generalized pruning encompasses a number of discrete and continuous pruning operations, 
including the tree erasure and Horton pruning.
The main result is invariance of a finite critical binary Galton-Watson tree with exponential edge lengths with respect 
to the generalized dynamical pruning for an arbitrary admissible pruning function.
The second part of the paper examines the continuum 1-D ballistic annihilation model 
$A+A \rightarrow \zeroslash$ for a constant particle density and
initial velocity that alternates between the values of $\pm 1$. 
The model evolution is equivalent to a
generalized dynamical pruning of the shock tree that represents dynamics of sinks 
(points of particle annihilation), with the pruning function equal to the total tree length. 
The shock tree is isometric to the level set tree of the model potential (integral of velocity).
This equivalence allows us to construct a complete probabilistic description of the 
annihilation dynamics for the initial velocity that alternates between the values of $\pm$1 
at the epochs of a stationary Poisson process.
Finally, we discuss several real tree representations of the ballistic annihilation model,
closely connected to the shock wave tree. 
\end{abstract}

\tableofcontents

\section{Introduction}%%%%%%%%%%%%%%%%%%%%%%%%%%%%%
\label{sec:intro}
Pruning of tree graphs is a natural operation that induces 
a contracting map \cite{KH97} on a suitable space of trees, with
the empty tree $\phi$ as the fixed point.
Examples of prunings studied in probability literature
include erasure from leaves at unit speed \cite{Neveu86,Evans2005,Winkel2012},
cutting the leaves \cite{BWW00,KZ17ahp,KZ18},
and eliminating nodes/edges at random \cite{AP98,AJH12}.
We consider here erasure of a tree from the leaves at a 
non-constant tree-dependent rate. 
Specifically, we introduce {\it generalized dynamical pruning} $\S(\varphi,T)$
of a rooted tree $T$ that eliminates all subtrees $\Delta_{x,T}$ 
(defined as the points descendant to point $x$ in $T$) for which
the value of a function $\varphi(\Delta_{x,T})$ is below $t$
(see Section~\ref{sec:pruning} for a formal definition).
The generalized dynamical pruning encompasses a number of
discrete and continuous pruning operations, depending on a choice of function 
$\varphi$.
For instance, the tree erasure from leaves at unit speed
\cite{Neveu86,Evans2005} corresponds to the pruning function $\varphi(T)$ equal to
the height of $T$; and the Horton pruning \cite{BWW00,KZ18}
corresponds to $\varphi(T)$ equal to the 
Horton-Strahler order of $T$.
For most selections of $\varphi(T)$, the map induced by the generalized dynamical pruning 
does not have a semigroup property, which distinguishes it from 
the operations studied in the literature.
Our main result (Section~\ref{sec:PI}, Theorem~\ref{main}) establishes invariance of the space
of finite critical binary Galton-Watson trees with i.i.d. exponential edge lengths
with respect to the generalized dynamical pruning, independently of
(an admissible) pruning function.
The invariance includes scaling of the edge lengths by the scaling constant 
equal to the survival 
probability ${\sf P}(\S(\varphi,T)\not{=}\phi)$.
The explicit form of the survival probability is established 
in Theorem~\ref{pdelta} for pruning by tree height (erasure from leaves at unit speed),
by Horton order, and by tree length.
The generalized prune invariance unifies several known invariance results (e.g., \cite{Neveu86,BWW00})
and suggests a framework for studying diverse problem-specific pruning operations.

\begin{figure}[t] %[p] [t]
\centering\includegraphics[width=0.9\textwidth]{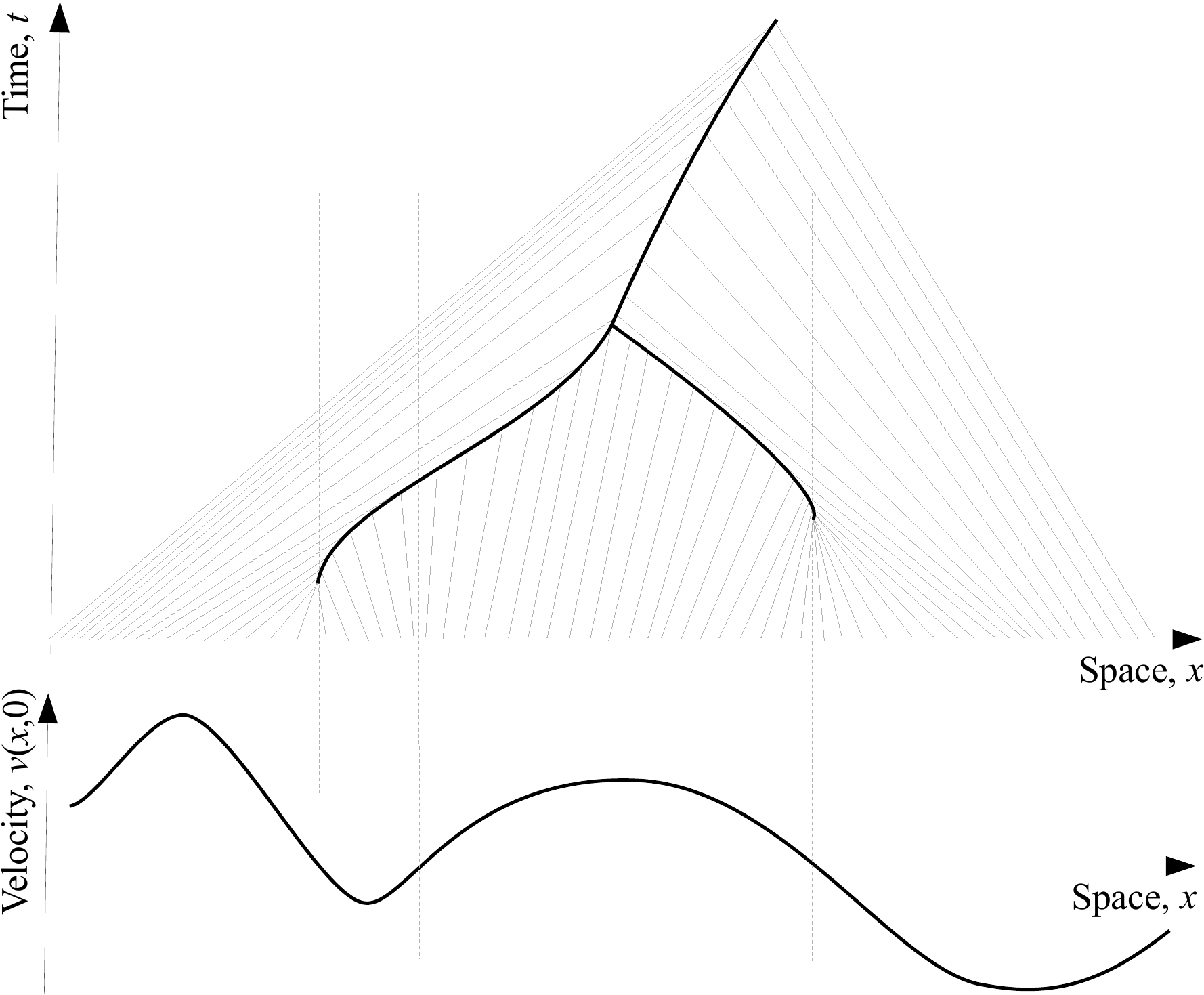}
\caption[Ballistic annihilation model: an illustration]
{Ballistic annihilation model: an illustration.
A particle with Lagrangian coordinate $x$ moves with velocity $v(x,0)$
until it collides with another particle and annihilates.
(Bottom panel): Initial velocity $v(x,0)$.
(Top panel): The space-time portrait of the system. 
The trajectories of selected particles are depicted by gray thin lines.
The shock wave that describes the motion and coalescence of sinks is shown 
by solid black line.
The sink trajectory forms an inverted Y-shaped tree. 
Vertical dashed lines show the time instants when the initial velocity changes sign. 
}
\label{fig:bam}
\end{figure} 

\subsection{Ballistic annihilation model}
As a notable application, we consider the 1-D ballistic annihilation model, 
traditionally denoted $A+A \rightarrow \zeroslash$.
This model describes the dynamics of particles on a real line:
a particle with Lagrangian coordinate $x$ moves with the velocity $v(x,0)$
until it collides with another particle, at which moment both particles annihilate,
hence the model notation.
The annihilation dynamics appears in chemical kinetics and bimolecular reactions; 
see \cite{EF85, BNRL93,Belitsky1995,Piasecki95,Droz95,BNRK96,Ermakov1998,Blythe2000,KRBN2010,Sidoravicius2017}.
The annihilation dynamics produces {\it sinks} ({\it shocks}) that correspond to the collisions 
of individual particles with consequent annihilation.
The moving {\it shock waves} represent the {\it sinks} that aggregate the annihilated particles and hence accumulate the mass of the media. 
Dynamics of these sinks resembles a coalescent process that generates a tree structure for their trajectories. 
The dynamics of a ballistic annihilation model with two coalescing sinks is illustrated in Fig.~\ref{fig:bam}.

\begin{figure}[t] %[p] [t]
\centering\includegraphics[width=0.9\textwidth]{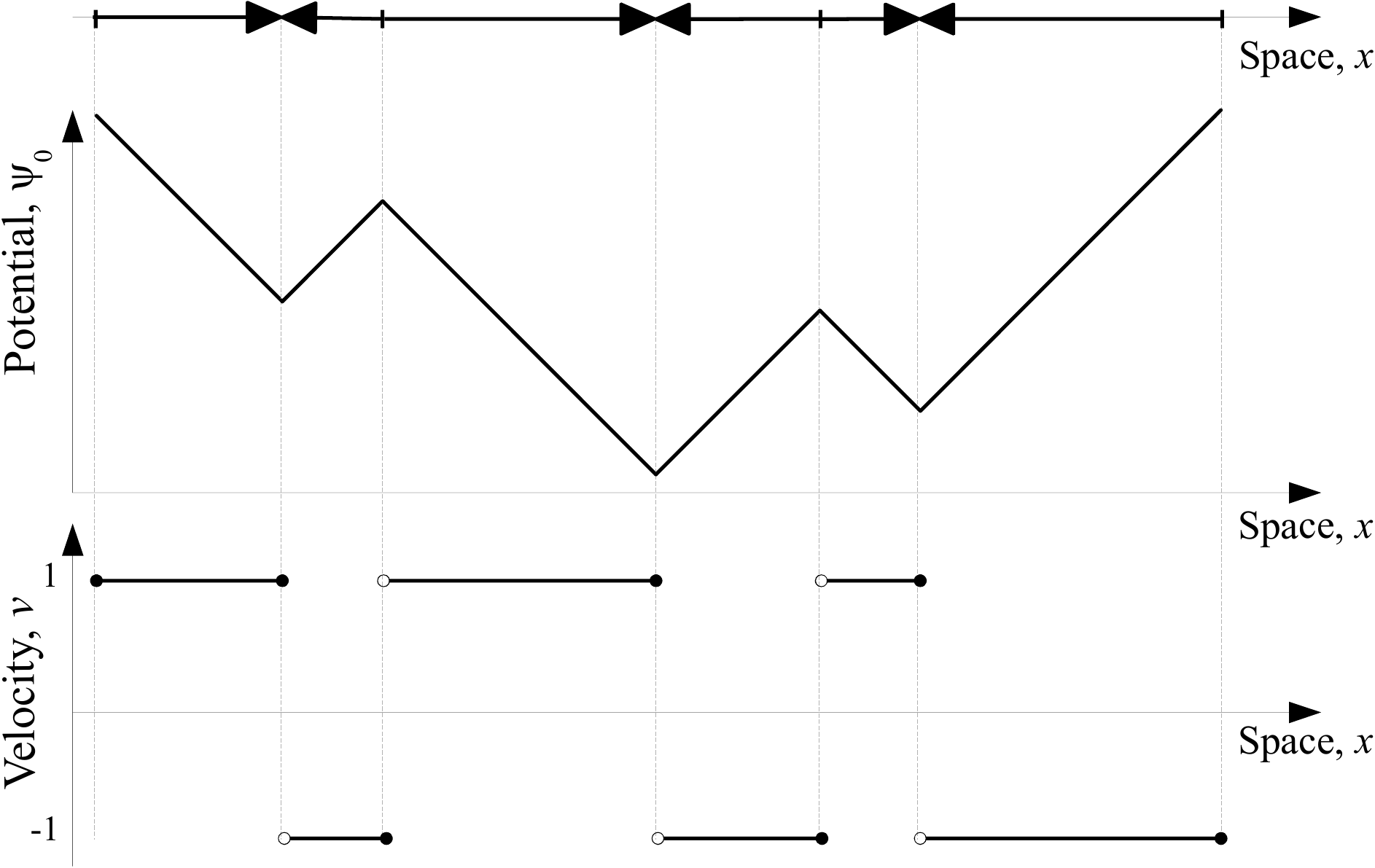}
\caption[Piece-wise linear unit slope potential]
{Piece-wise linear unit slope potential: an illustration.
(Top): Arrows indicate alternating directions of particle movement 
on an interval in $\mathbb{R}$.
(Middle): Potential $\Psi_0(x)$ is a piece-wise linear unit slope function.
(Bottom): Particle velocity alternates between values $\pm1$ within consecutive intervals.}
\label{fig:vel}
\end{figure} 

\subsection{Ballistic annihilation with two valued initial velocity}

\begin{figure}[t] %[p] [t]
\centering\includegraphics[width=0.9\textwidth]{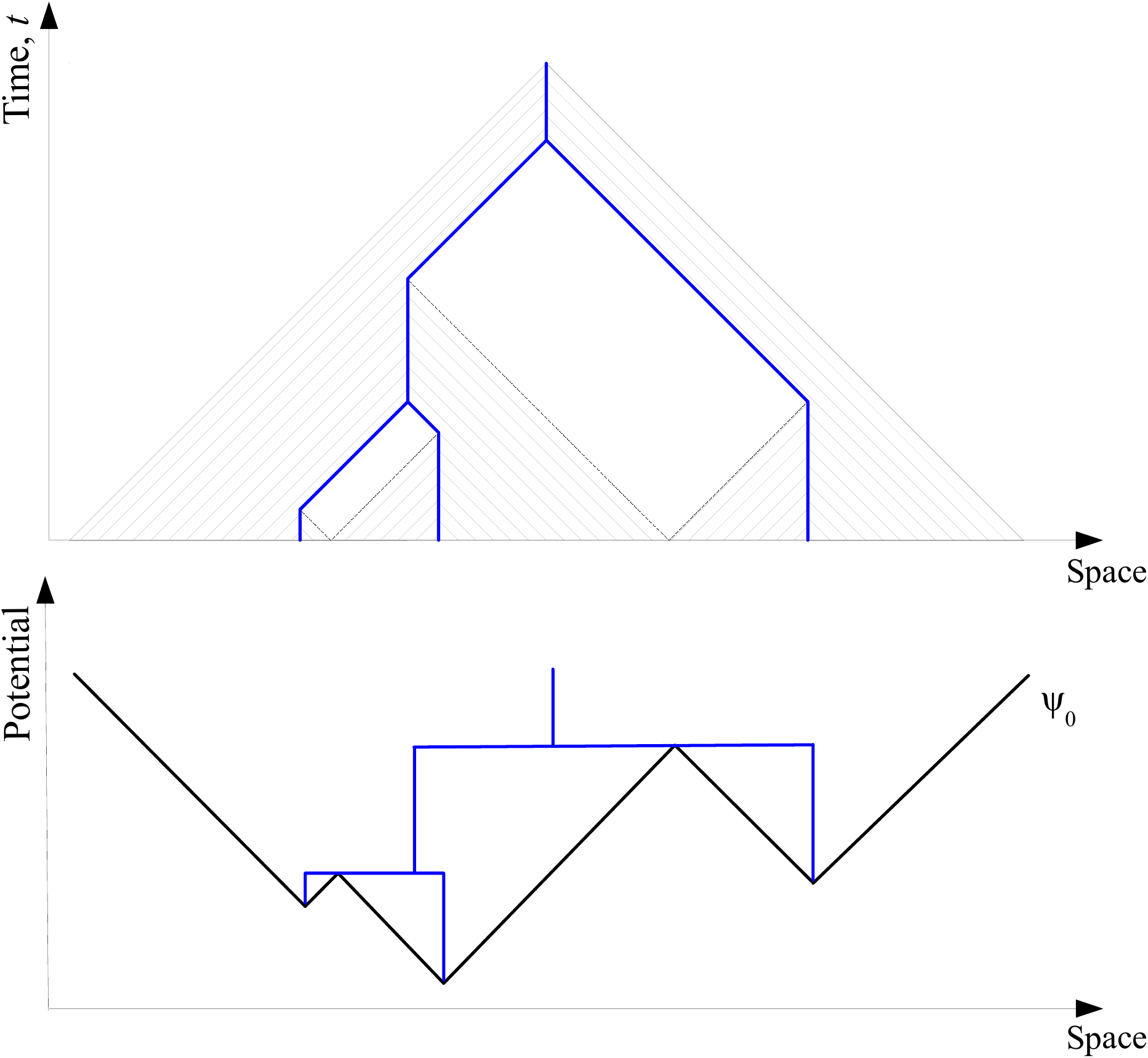}
\caption[Shock tree in a model with a unit slope potential: an illustration]
{Shock wave tree (sink tree) in a model with a unit slope potential: an illustration. 
(Top panel): Space-time dynamics of the system. Trajectories of particles
are illustrated by gray lines.
The sink trajectory (shock wave tree) is shown by blue line. 
Notice the appearance of empty regions (zero particle density) in the space-time domain.
(Bottom panel): Initial unit slope potential $\Psi_0(x)$ with three local 
minima (black line) and a graphical representation of the shock wave tree 
(blue line) in the phase space $(x,\psi(x,t))$.
}
\label{fig:shock_tree}
\end{figure} 

We consider here a model on a finite interval $[a,b]$ with a constant initial particle density 
$g(x)=g_0$ and an initial velocity field $v(x,0)$ that alternates between the values 
$\pm 1$, as illustrated in Fig.~\ref{fig:vel}. 
Equivalently, we work with potential velocity field
$v(x,t) = -\partial_x\psi(x,t)$ where the initial potential 
$\Psi_0(x) = \psi(x,0)$ is a piece-wise linear continuous function with slopes $\pm 1$.
We furthermore assume that $\Psi_0(x)$ is a negative excursion on $[a,b]$. 
This choice corresponds to a particularly tractable structure
of the shock wave tree, which is completely described in this work.
The dynamics of a system with a simple unit slope potential
is illustrated in Fig.~\ref{fig:shock_tree}.
Prior to collision, the particles move at unit speed either to the left or to the right,
so their trajectories in the $(x,t)$ space are given by lines with 
slope $\pm 1$ (Fig.~\ref{fig:shock_tree}, top panel, gray lines).
The sinks appear at $t=0$ at the local minima
of the potential $\psi(x,0)$. 
These minima correspond to the points 
whose right neighborhood moves to the left and left neighborhood
moves to the right with unit speed, hence immediately creating a sink.
The sinks move and merge to create a shock wave tree, shown
in blue in Fig.~\ref{fig:shock_tree}. 
Importantly, for our particular choice of initial potential, 
the combinatorial structure and planar embedding of the shock tree coincides 
with that of the level set tree $T=\textsc{level}(\psi(x,0))$ of the initial potential 
(Section~\ref{sec:solution}, Theorem~\ref{thm:SWT}).
The bottom panel of Fig.~\ref{fig:shock_tree} illustrates a particularly
useful embedding of the shock wave tree into the phase space $(x,\psi(x,t))$
of the system; this embedding is discussed in detail in Sect.~\ref{sec:solution}.

\begin{figure}[t] %[p] [t]
\centering\includegraphics[width=0.9\textwidth]{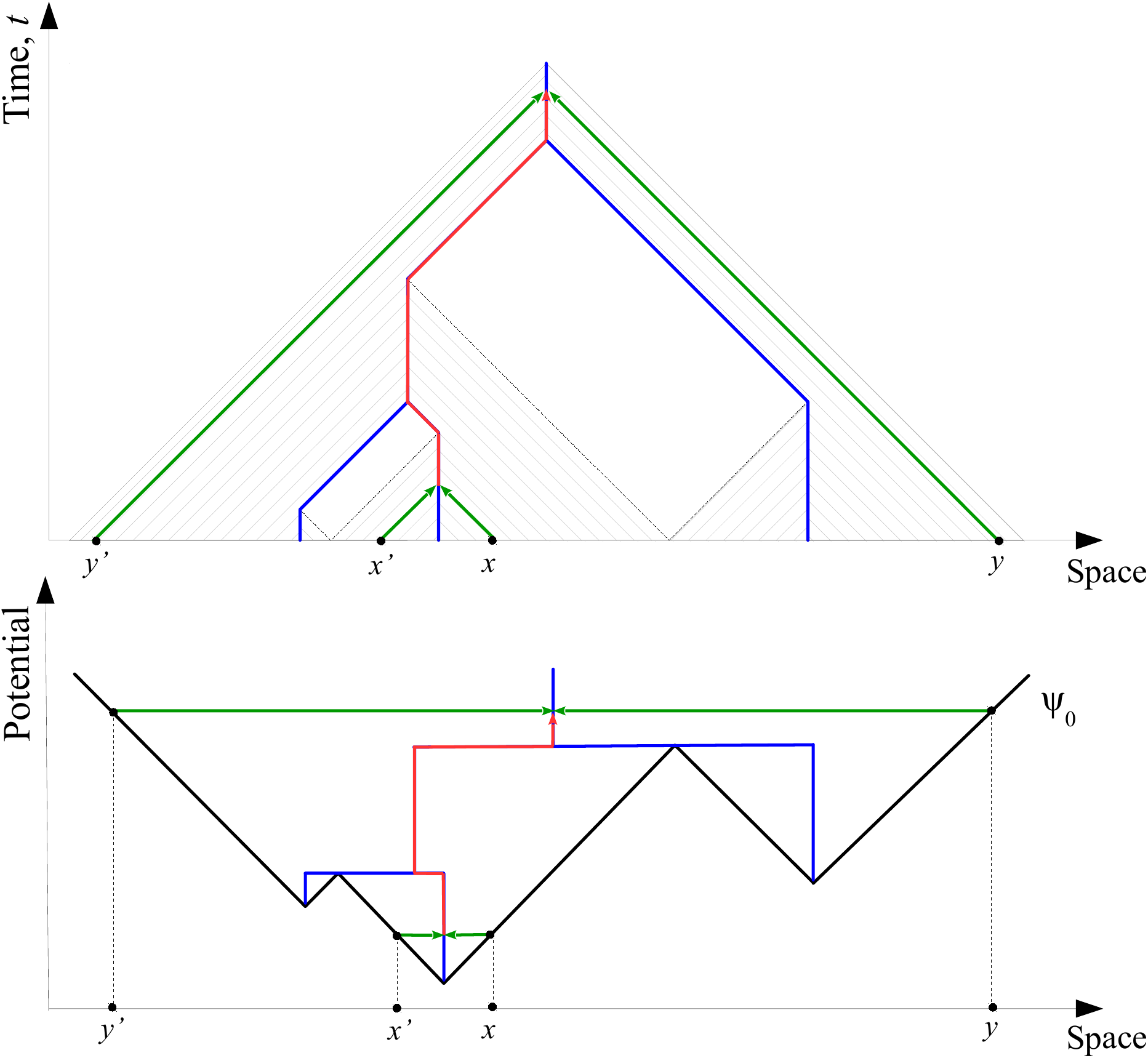}
\caption[Real tree representation of a model with a unit slope potential: an illustration]
{$\mathbb{R}$-tree representation of a ballistic annihilation model with a unit slope 
potential: an illustration. 
Figure illustrates dynamics of four points, $x,x',y,$ and $y'$, marked in the horizontal space axis.
The pairs of points $\{x,x'\}$ and $\{y,y'\}$ collide and annihilate with each 
other.
Green arrows correspond to ballistic runs of points $x,x',y,y'$,
and hence to leaves of tree $\mathbb{T}(\Psi_0)$.
Red line corresponds to the trajectory of points $x,x'$ after their 
collision, within a sink.
The rest of notations are the same as in Fig.~\ref{fig:shock_tree}.
}
\label{fig:Rtree}
\end{figure} 

%%%%%%%%%%%%%%%%%%%%%%%%%%%%%%%%%%%%%%%%%%%%%%%%%%%%%%%%%%%%%%%%%%%
\subsection{Ballistic annihilation as dynamical pruning}
The main application result of our work (Section~\ref{sec:Bdyn}, Theorem~\ref{thm:pruning})
states that the ballistic annihilation dynamics in case of a unit slope 
potential is equivalent to the
generalized dynamical pruning of the shock wave tree with pruning 
function $\varphi(\tau)$ equal to the total length of $\tau$.
The pruned tree in this construction describes the potential
restricted to the domain of particles that did not annihilate until instant $t$.
To retain information about sinks and empty intervals, 
we equip a tree with {\it massive points}, placed at the tree {\it cuts} --
the boundary of the pruned tree parts (Section~\ref{sec:Bdyn}, Definition~\ref{def:cuts}).
A complete description of ballistic annihilation dynamics is then given in terms of
mass-equipped trees, which involves a suitably modified 
definition of pruning (Section~\ref{sec:Bdyn}).
In particular, we establish a one-to-one correspondence between
pruned mass-equipped trees and time-advanced potentials $\psi(x,t)$ 
with massive sinks (Section~\ref{sec:Bdyn}, Constructions~\ref{con1},
\ref{con2}).
Theorem~\ref{thm:annihilation} describes the ballistic annihilation dynamics 
for the initial velocity field
that alternates between $\pm1$ at epochs of a stationary Poisson point process 
on $\mathbb{R}$.
The respective potential corresponds to the Harris path of a critical binary 
Galton-Watson tree with i.i.d. exponential edge lengths. 
This equivalence allows one to use a suit of results available for the 
exponential Galton-Watson tree to study the ballistic annihilation;
in particular, this
connects the ballistic annihilation dynamics with the invariance results 
of Theorems~\ref{main}, \ref{pdelta}.
We use this connection to derive the time-dependent mass distribution of a 
random sink in an infinite potential (Section~\ref{sec:rand_mass}, Theorem~\ref{thm:mass}).

%%%%%%%%%%%%%%%%%%%%%%%%%%%%%%%%%%%%%%%%%%%%%%%%%%%%%%%%%%%%%%%%%%
\subsection{Real tree representation of ballistic annihilation}
The applied part of this work examines the shock wave tree of ballistic annihilation,
which is a finite tree with edge lengths considered as a metric space. 
Section~\ref{sec:Rtree} discussed a natural extension of this construction
to {\it real trees} (or $\mathbb{R}$-trees) that are tightly connected to the shock wave tree
and possess key information about the annihilation dynamics.

Recall that an $\mathbb{R}$-tree is a generalization of 
the concept of a finite tree with edge lengths to infinite spaces \cite{Evans2005};
see Sect.~\ref{sec:Rsetup} for a formal setup.
We construct (Sect.~\ref{sec:Rfull}) an $\mathbb{R}$-tree $\mathbb{T}=\mathbb{T}(\Psi_0)$ that describes the 
entire model dynamics as coalescence of particles
and sinks; this tree is sketched by gray lines in the top panel of Fig.~\ref{fig:shock_tree}. 
Specifically, the tree consists of points $(x,t)$  such that there exist either 
a particle or a sink with coordinate $x$ at time $t$.
There is one-to-one correspondence between the initial particles $(x,0)$ 
and leaf vertices of $\mathbb{T}$.
Each leaf edge of $\mathbb{T}$ corresponds (one-to-one) to
the free (ballistic) run of a corresponding particle before annihilating in a sink.
Four of such free runs are depicted by green arrows in Fig.~\ref{fig:Rtree}.
The shock wave tree (movement and coalescence of sinks) corresponds to the non-leaf
part of the tree $\mathbb{T}$; it is shown by blue lines in Figs.~\ref{fig:shock_tree},
\ref{fig:Rtree}.
We adopt a convention that the motion of a particle consists of
two parts: an initial ballistic run at unit speed, and subsequent motion within 
a respective sink.
For example, the within-sink motion of particles $x$ and $x'$ is shown by red line in Fig.~\ref{fig:Rtree}.
This interpretation extends motion of all particles to the same 
time interval $[0, t_{\rm max}]$, with $t_{\rm max}$ being the 
time of appearance of the final sink that accumulates the total mass
on the initial interval.
This final sink serves as the tree root.
Section~\ref{sec:Rfull} introduces a proper metric on this space 
so that the model is represented by a time oriented rooted $\mathbb{R}$-tree.
In particular, the metric induced by this tree on 
the initial particles $(x,0)$ becomes an {\it ultrametric},
with the distance between any two particles equal to
the time until their collision (as particles or as respective sinks).

Section~\ref{sec:Rdomain} discusses two metric space representations
of the system's domain $[a,b]$, 
one is an $\mathbb{R}$-tree and the other is not, that
describe the ballistic annihilation dynamics and are readily constructed
from the initial potential $\Psi_0(x)$. 
One of these spaces, which is an $\mathbb{R}$-tree, 
establishes an equivalence between the pairs of points that collide
with each other, like the pairs $(x,x')$ and $(y,y')$ in Fig.~\ref{fig:Rtree}.
This tree is isometric to the level set tree $\textsc{level}(\Psi_0)$ of the 
initial potential that is used in this work to describe the shock wave tree
(Theorem~\ref{thm:SWT}); it is known in the literature 
as a {\it tree in continuous path} \cite[Definition 7.6]{Pitman},\cite[Example 3.14]{Evans2005}.

The tree metrics and prunings considered in this work are connected to the
dynamics of ballistic annihilation with particular initial conditions. 
In Sect.~\ref{sec:Rprune} we briefly discuss a natural way of introducing 
alternative prunings on $\mathbb{R}$-trees and show that a typical pruning does not have 
the semigroup property.

\medskip

The rest of the paper is organized as follows.
The generalized dynamical pruning is introduced in Section~\ref{pruning}. 
Section~\ref{sec:GWT} collects necessary results on level set trees
and proves the invariance theorems for critical binary Galton-Watson trees with i.i.d. exponential edge lengths.
The shock wave tree for the dynamics of 1-D ballistic annihilation $A+A \rightarrow \zeroslash$ with piece-wise
unit slope potential is analyzed in Section~\ref{sec:annihilation}.
Section~\ref{sec:Rtree} discusses a real tree representation of ballistic annihilation.
Sections~\ref{sec:BDEE},\ref{sec:rand_mass} examine a unit slope 
potential with exponential segments 
durations (Poisson epoch velocity alterations), for a finite and infinite 
domain, respectively.
Section~\ref{sec:discussion} concludes.

\begin{figure}[t] %[p] [t]
\centering\includegraphics[width=0.9\textwidth]{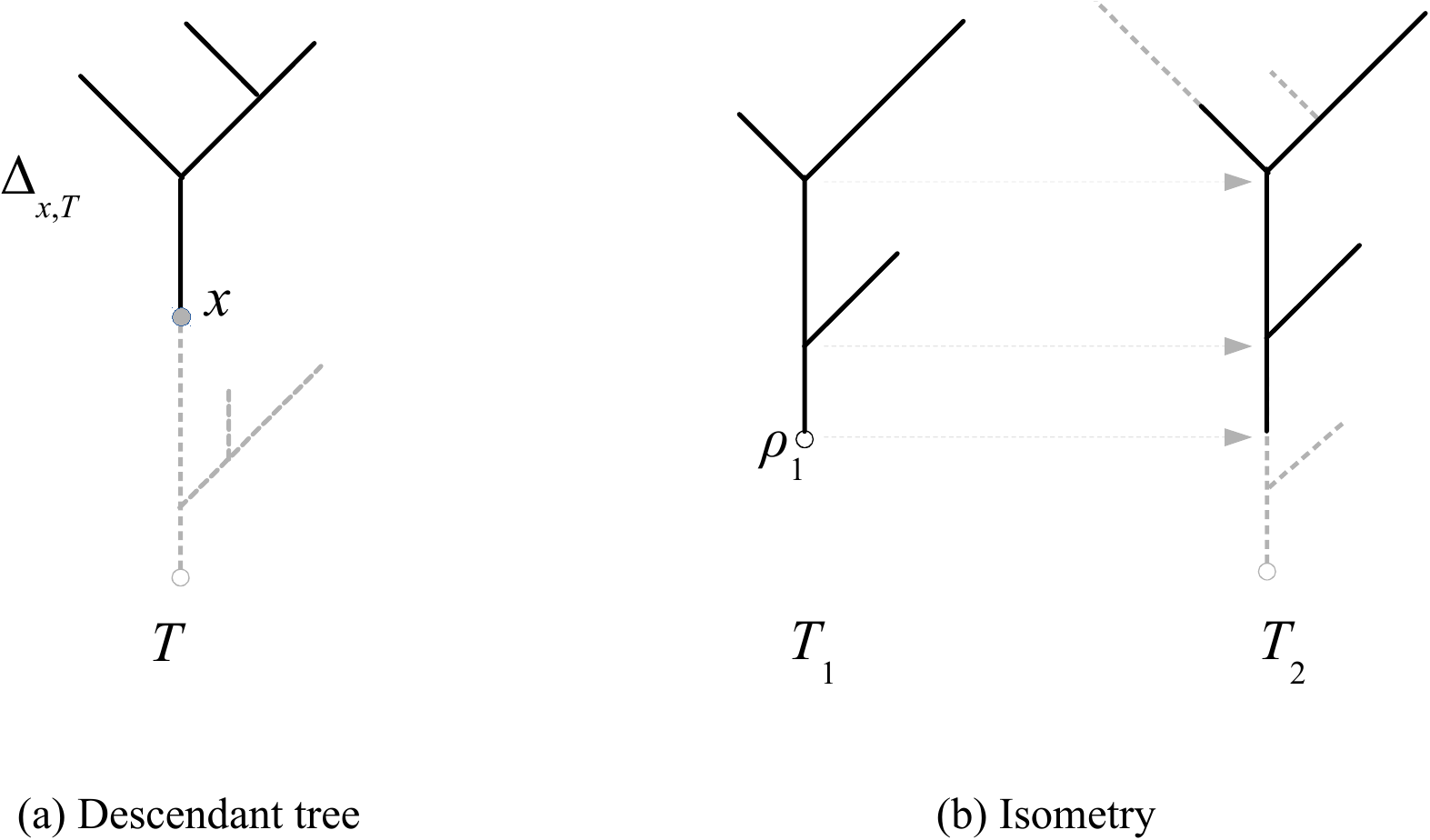}
\caption[Descendant subtree and isometry: an illustration.]
{\small Descendant subtree and tree isometry: an illustration.
(a) Subtree $\Delta_{x,T}$ (solid black lines) descendant to a point $x$ (gray circle)
in a tree $T$ (union of dashed gray and soling black lines).
(b) Isometry of trees. Tree $T_1$ (left) is mapped to tree $T_2$ (right).
The image of $T_1$ within $T_2$ is shown by black lines, the rest of $T_2$
is shown by dashed gray lines. 
Here, tree $T_1$ is less than tree $T_2$, $T_1 \preceq T_2.$}
\label{fig:isometry}
\end{figure}

%%%%%%%%%%%%%%%%%%%%%%%%%%%%%%%%%%%%%%%%%%%%%%%%%%%%%%%%%%%%%%%%%%%%%%%%%
\section{Generalized dynamical pruning}\label{pruning} %%%%%%%%%%%%%%%%%%
\subsection{Trees}
Consider a space $\L$ of finite unlabeled rooted reduced binary trees with edge 
lengths and planar embedding. 
The space includes the {\it empty tree} $\phi$ comprised of a
root vertex and no edges.
A binary tree is called {\it rooted} if one of its vertices of degree 1 or 2 is selected 
as the tree root $\rho$.
The existence of a root vertex imposes the parent-offspring relation between each pair of connected vertices in a tree $T\in\L$: the one closest to the root is called {\it parent}, 
and the other -- {\it offspring}.
The tree root is the only vertex that does not have a parent.
Formally, a binary tree $T=\rho\cup\{v_i,e_i\}_{1\le i \le \#T}$ is comprised of the root $\rho$
and a collection of non-root vertices $v_i$, each of which is connected to its 
unique parent $v_{{\sf parent}(i)}$
by the parental edge $e_i$, $1\le i \le \#T$.
Here $\#T$ denotes the number of non-root vertices, equal to the number of edges, in a tree $T$.
Unless indicated otherwise, the vertices are indexed in order of the depth-first search, starting from the root.
A tree is called {\it reduced} if it has no vertices of degree 2, with the 
root as the only possible exception.
The operation of {\it series reduction} removes each degree-two non-root vertex 
by merging its adjacent edges into one and adding the respective lengths.  
Planar embedding is equivalent to introducing a relative orientation (right/left)
for every pair of siblings.

A non-empty rooted tree is called {\it planted} if its root has degree 1;
in this case the only edge connected to the root is called {\it stem}.
Otherwise the root has degree 2 and a tree is called {\it stemless}.
We denote by $\L^{|}$ and $\L^{\vee}$ the subspaces of planted and stemless
trees, respectively.
Hence $\L=\L^{|}\cup\L^{\vee}$ and $\L^{|}\cap\L^{\vee}=\{\phi\}$. 
Fig.~\ref{fig:planted} shows examples of a planted and a stemless tree.
Most of discussion in this work refers to planted trees.

Let $l_T=(l_1,\dots,l_{\#T})$ be the vector of edge lengths.
The length of a tree $T$ is the sum of the lengths of its edges:
\[\textsc{length}(T) = \sum_{i=1}^{\#T} l_i.\]
A tree $T\in\L$ is naturally equipped with a length metric $d(x,y)$ for points $x,y \in T$.
The distance $d(x,y)$ equals the length of the minimal path within $T$ between $x$ and $y$.
The height of a tree $T$ is the maximal distance between the root
and a vertex:
\[\textsc{height}(T) = \max_{1\le i \le \#T} d(v_i,\rho).\]

Sometimes we focus on the combinatorial tree ${\textsc{shape}(T)}$, 
which retains the branching structure of $T$ while omitting its edge lengths and embedding.
Similarly, ${\textsc{p-shape}(T)}$ retains the branching structure of $T$ and planar embedding,
and omits the edge length information.
The space of finite unlabelled rooted reduced binary planted combinatorial (planar) 
trees is denoted by $\cT$ ($\cT_{\rm plane}$).

\begin{figure}[t] %[p] [t]
\centering\includegraphics[width=0.7\textwidth]{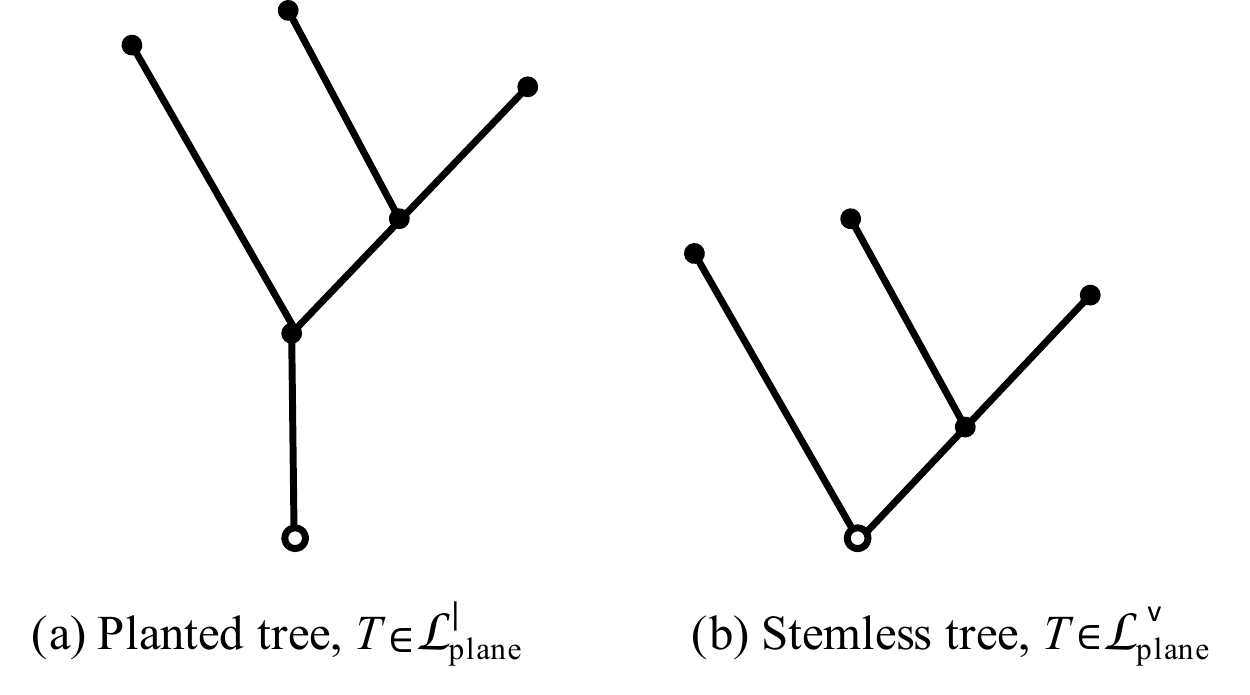}
\caption[Planted and stemless trees.]
{\small Examples of planted (a) and stemless (b) trees.}
\label{fig:planted}
\end{figure}

\subsection{Generalized dynamical pruning}
\label{sec:pruning}
Given a tree $T \in \L$ and a point $x \in T$, let $\Delta_{x,T}$ be the {\it descendant tree} 
of $x$: it is comprised of all points of $T$ descendant to $x$, including $x$; see Fig.~\ref{fig:isometry}a. 
Then $\Delta_{x,T}$ is itself a tree in $\L$ with root at $x$. 
Let $(T_1,d)$ and $(T_2,d)$ be two metric rooted trees, and let $\rho_1$ denote the root of $T_1$. 
A function $f: (T_1,d) \rightarrow (T_2,d)$ is said to be an {\it isometry} if 
${\sf Image}[f] \subseteq \Delta_{f(\rho_1),T_2}$ and for all pairs $x,y \in T_1$,
$$d\big(f(x),f(y)\big)=d(x,y).$$
The tree isometry is illustrated in Fig.~\ref{fig:isometry}b. 
We use the isometry to define a {\it partial order} in the space $\L$ as follows.  
We say that $T_1$ is {\it less than or equal to} $T_2$ and write $T_1 
\preceq T_2$ if 
and only if there is an isometry $f: (T_1,d) \rightarrow (T_2,d)$. 
The relation $\preceq$ is a partial order as it satisfies the 
reflexivity, antisymmetry, and transitivity conditions. 
Moreover, a variety of  other properties of this partial order can be 
observed, including order denseness and semi-continuity. 

We say that a function $\varphi:\L \rightarrow \mathbb{R}$ is {\it monotone non-decreasing} 
with respect to the partial order $\preceq $ if
$\varphi(T_1) \leq \varphi(T_2)$ whenever $T_1 \preceq T_2.$
Consider a monotone non-decreasing function $\varphi:\L \rightarrow \mathbb{R}^+$. 
We define the {\it generalized dynamical pruning} operator $\S(\varphi,T):\L\rightarrow\L$ induced by $\varphi$ 
at any $t\ge 0$ as
$$\S(\varphi,T):=\rho\cup\Big\{x \in T\setminus\rho ~:~\varphi\big(\Delta_{x,T}\big)\geq t \Big\}.$$
Informally, the operator $\S$ cuts all subtrees $\Delta_{x,T}$ for which the value of $\varphi$
is below threshold $t$, and always keeps the tree root.
Extending the partial order to $\L$
by assuming $\phi \preceq T$ for all $T \in \L$, we observe for any $T\in\L$ that 
$S_s(T) \preceq S_t(T)$ whenever $s \geq t$.

The dynamical pruning operator $\S$ encompasses and 
unifies a range of problems, depending on a choice of $\varphi$,
as we illustrate in the following examples.

\begin{ex}[{\bf Tree height}]
\label{ex:height}
Let the function $\varphi(T)$ equal the height of tree $T$:
\begin{equation}\label{phi_hight}
\varphi(T) = \textsc{height}(T).
\end{equation}
In this case the operator $\S$ satisfies {\bf continuous semigroup property}:
$$\cS_t\circ\cS_s=\cS_{t+s} ~\text{ for any }~t,s\ge 0.$$ 
It coincides with the continuous pruning (tree erasure) studied in Neveu \cite{Neveu86}, who
established invariance of a critical and sub-critical binary
Galton-Watson processes with i.i.d. exponential edge lengths with respect to this operation.

It is readily seen that for a coalescent process, the dynamical pruning $\S$ of the corresponding coalescent tree with $\varphi(T)$ as in (\ref{phi_hight}) 
replicates the coalescent process.
\end{ex}

\begin{ex}[{\bf Horton-Strahler order}]
\label{ex:H}
Let the function $\varphi(T)+1$ equal the Horton-Strahler order ${\sf k}(T)$ of a tree $T$:
\begin{equation}\label{phi_Horton}
\varphi(T) = {\sf k}(T)-1.
\end{equation}
The Horton-Strahler order \cite{Pec95,BWW00,KZ16} 
is closely related to the operation $\cR$ of leaf pruning
with consecutive series reduction in a planted rooted tree,
This operation is known as {\it Horton pruning}; it is illustrated in Fig.~\ref{fig:HST}.
The pruning induces a contracting map on $\L$. 
The trajectory of each tree $T$ under $\cR(\cdot)$ is uniquely
determined and finite:
\be\label{TRR0}
T\equiv\cR^0(T)\to \cR^1(T) \to\dots\to\cR^k(T)=\phi,
\ee
with the empty tree $\phi$ as the (only) fixed point \cite{KZ18}.
The Horton-Strahler order ${\sf k}(T)$ of a planted tree from $\L^{|}$ 
is the minimal number of prunings necessary to eliminate a tree $T$.
The Horton-Strahler order ${\sf k}(T)$ of an unplanted tree from $\L^{\vee}$
is the minimal number of prunings necessary to eliminate a tree $T$ plus one.
The Horton-Strahler order is also known as the {\it register number} 
\cite{FRV79}, as it equals the minimum number of memory registers 
necessary to evaluate an arithmetic expression described by a tree $T$.

With the choice \eqref{phi_Horton} the dynamical pruning operator coincides with the
Horton pruning:
$\S=\mathcal{R}^{\lfloor t \rfloor}$. 
It is readily seen that $\S$ satisfies {\bf discrete semigroup property}:
$$\cS_t\circ\cS_s=\cS_{t+s} ~\text{ for any }~t,s\in \mathbb{N}_0.$$ 

It has been shown in \cite{BWW00} that a critical binary Galton-Watson tree
is invariant with respect to Horton pruning; moreover, this is the only prune-invariant distribution 
from the Galton-Watson family. 
Study \cite{KZ18} introduced a one-parameter family of prune-invariant trees, which includes
critical binary Galton-Watson distribution as a special case.
The Horton prune invariance is also empirically found in multiple
observed and modeled systems (e.g., \cite{Pec95,ZK12,ZZF13}). 

\begin{figure}[h] %[p] [t]
\centering\includegraphics[width=0.9\textwidth]{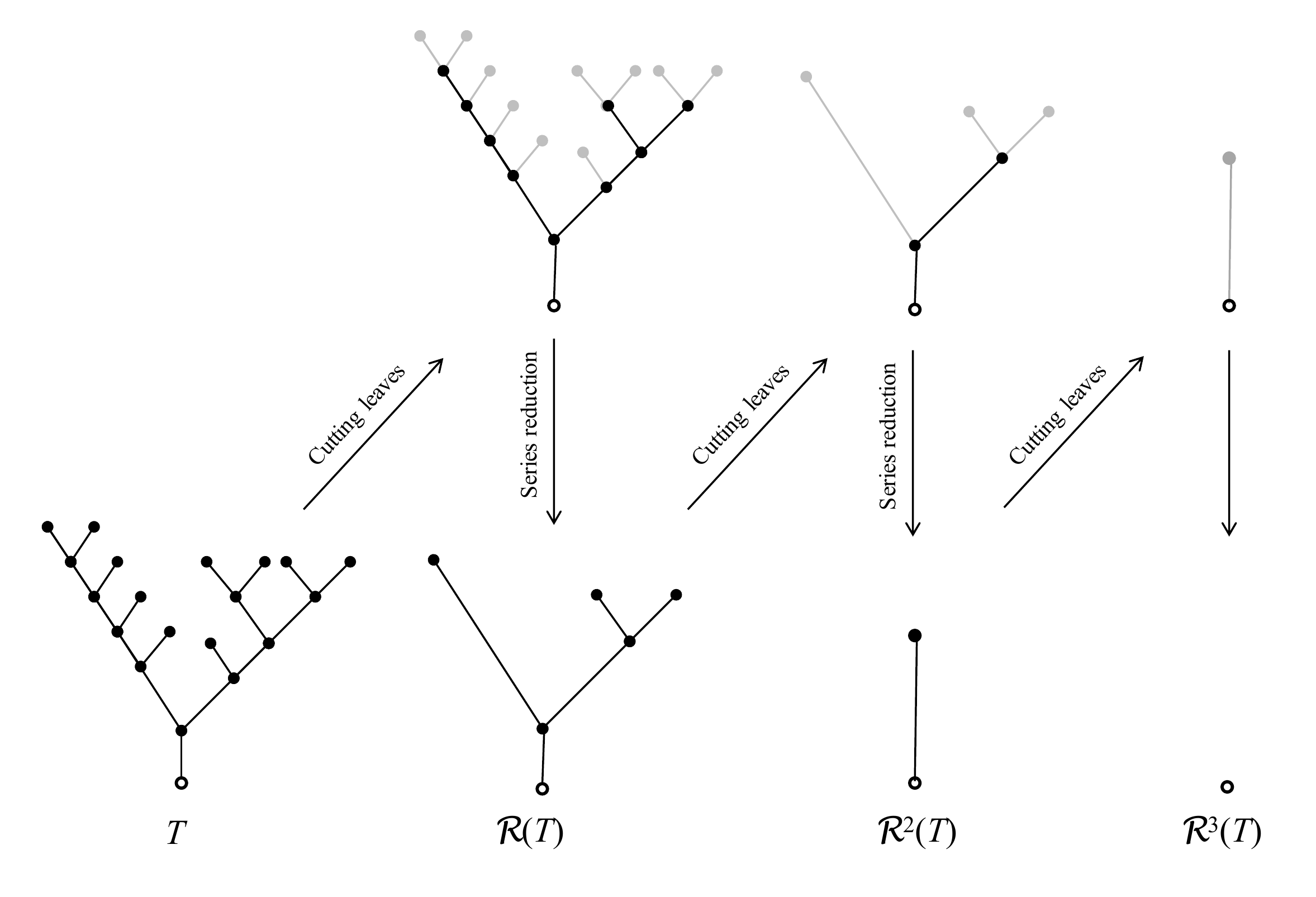}
\caption[Example of Horton-Strahler indexing]
{\small Horton pruning and Horton-Strahler ordering: an example.
The order of the tree is ${\sf k}(T)=3$, since the tree $T$ is eliminated in three prunings.
Each pruning consists of cutting leaves (top row) and consecutive series reduction (bottom row).
The pruning trajectory $T \to \cR(T) \to \cR^2(T) \to \cR^3(T)=\phi$ is shown in the bottom row of panels.
}
\label{fig:HST}
\end{figure}

A widespread empirical constraint related to the Horton-Strahler orders is so-called
{\it Horton law} -- a geometric decay of the number of branches of a given order in a
finite tree; see \cite{Kirchner93,Pec95,Tarboton96,DR99,KZ18a} and references therein.
Sufficient conditions for the Horton law in an asymptotically increasing tree 
were found in \cite{KZ16}, hence providing 
rigorous foundations for the celebrated regularity that has escaped a formal 
explanation for a long time.
A weak form of Horton law was proved for Kingman coalescent and the level set tree 
of a sequence of i.i.d. random variables \cite{KZ17ahp}. 
\end{ex}

\begin{ex}[{\bf Total tree length}]
\label{ex:L}
Let the function $\varphi(T)$ equal the total lengths of $T$:
\begin{equation}\label{phi_length}
\varphi(T) = \textsc{length}(T).
\end{equation}
The dynamical pruning by the tree length is illustrated in Fig.~\ref{fig:LP} for
a Y-shaped tree that consists of three edges.

Importantly, in this case $\S$ {\bf does not satisfy} the semigroup property.
To see this, consider an internal vertex point $x \in T$ (see Fig.~\ref{fig:LP},
where the only internal vertex is marked by a gray ball). 
Then $\Delta_{x,T}$ consists of point $x$ as its root, 
the left subtree of length $a$ and the right subtree of length $b$. 
Observe that the whole left subtree is pruned away by time $a$, and the whole right 
subtree is pruned away by time $b$.
However, since $$\varphi(\Delta_{x,T}) = \textsc{length}(\Delta_{x,T})=a+b,$$
the junction point $x$ will not be pruned until time instant $a+b$. 
Thus, $x$ will be a leaf of  $\S(\varphi,T)$ for all $t$ such that
$$\max\{a,b\} \leq t \leq a+b.$$
This situation corresponds to Stage IV in Fig.~\ref{fig:LP}, where
each of the left and right subtrees stemming from point $x$, marked by
a gray ball, consists of a single edge.

\begin{figure}[t] %[p] [t]
\centering\includegraphics[width=0.8\textwidth]{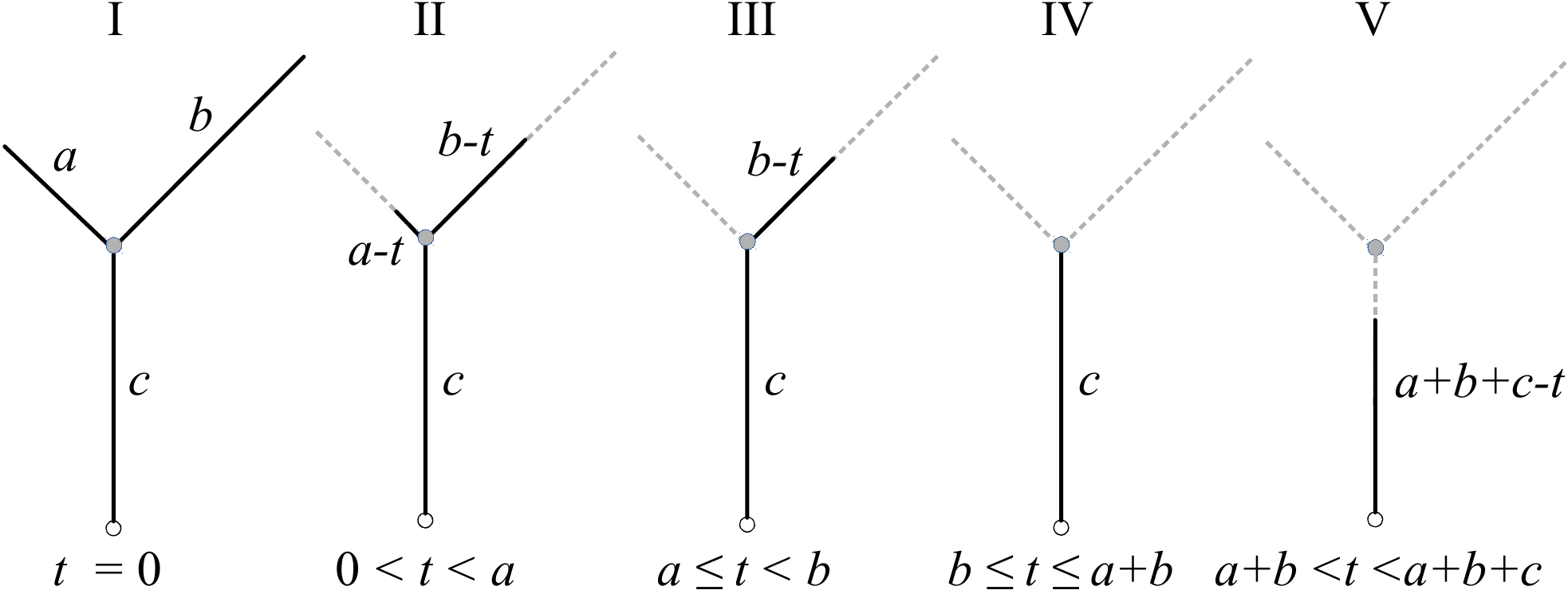}
\caption[Pruning by length: an illustration]
{\small Pruning by tree length: an illustration. 
Figure shows five generic stages in the dynamical pruning of a Y-shaped tree $T$,
with pruning function $\varphi(T) = \textsc{length}(T)$.
The pruned tree $\S$ is shown by solid black lines; the pruned parts of the initial 
tree are shown by dashed gray lines.\\
{\bf Stage I}: Initial tree $T$ consists of three edges, with lengths $a,b,c$
indicated in the panel; without loss of generality we assume $a<b$.\\
{\bf Stage II}: For any $t<a$ the pruned tree $\S$
has a Y-shaped form with leaf edges truncated by $t$. \\
{\bf Stage III}: For any $a\le t < b$ the pruned tree $\S$
consists of a single edge of length $c+b-t$.\\
{\bf Stage IV}: For any $b\le t \le a+b$ the pruned tree $\S$
consists of a single edge of length $c$. Notice that during this stage 
the tree $\S$ does not change with $t$; this loss of memory 
causes the process to violate the semigroup property.\\
{\bf Stage V}: For any $a+b<t<a+b+c$ the pruned tree $\S$
consists of a single edge of length $a+b+c-t$.}
\label{fig:LP}
\end{figure}

The semigroup property in this example can be introduced by considering 
{\bf mass-equipped trees}.
Informally, we replace each pruned subtree $\tau$ of $T$ with a point of mass equal
to the total length of $\tau$.
The information encoded in the massive points allows one to reconstruct
some of the information lost during the pruning process, and hence establish
the semigroup property.
Specifically, by time $a$, the pruned away left subtree turns into a massive point of mass $a$ 
attached to $x$ on the left side. 
Similarly, by time $b$, the pruned away right subtree turns into a massive point of mass $b$ 
attached to $x$ on the right side. 
For $\max\{a,b\} \leq t \leq a+b$, the construction keeps truck of the quantity $a+b-t$ 
associated with point $x$, and when the quantity $a+b-t$ decreases to $0$, the two massive points coalesce into one. 
If a single massive point seats at a leaf, its mass is $t$. 
If a double massive point seats at a leaf, further pruning of the leaf's parental edge is prevented 
until the two massive points coalesce.
Keeping track of all such quantities makes $\S$ satisfy the continuous semigroup property.
This construction is formally introduced in Section~\ref{sec:annihilation}.

\medskip
\noindent
Notably, in this case the pruning operator $\S$ coincides with 
the potential dynamics of continuum mechanics formulation of the 1-D ballistic annihilation model,
$A+A \rightarrow \zeroslash$, as discussed 
below in Section~\ref{sec:annihilation}. 
\end{ex}

\begin{ex}[{\bf Number of leaves}]
\label{ex:numL}
Let the function $\varphi(T)$ equal the number of leaves in a tree $T$.
This choice is closely related to the mass-conditioned dynamics of
an aggregation process. 
Specifically, consider $N$ singletons (particles with unit mass)
that appear in a system at instants $t_n\ge 0$, $1\le n\le N$. 
The existing clusters merge into consecutively larger clusters by 
pair-wise mergers. 
The cluster mass is additive: a merger of two clusters of masses
$i$ and $j$ results in a cluster of mass $i+j$.
We consider a time-oriented tree $T$ that describes this process. 
The tree $T$ has $N$ leaves and $(N-1)$ internal vertices.
Each leaf corresponds to an initial particle, each internal 
vertex corresponds to a merger of two clusters, and the edge lengths represent
times between the respective mergers. 
The action of $\S$ on such a tree coincides with a conditional aggregation process
state that only considers clusters of mass $\ge t$. 
A well-studied special case is a coalescent process with a kernel $K(i,j)$,
where all particles appear at instant $t=0$ and each pair of clusters with masses $i,j$ merges with 
intensity proportional to $K(i,j)=K(j,i)$, independently of all other pairs.
\end{ex}

\subsection{Pruning for $\mathbb{R}$-trees}
The generalized dynamical pruning introduced in Sect.~\ref{sec:pruning}
is readily applied to non-binary and {\it real trees} (see Sect.~\ref{sec:Rsetup}
for definitions), although this is not
the focus of our work. 
We notice that the total tree length (Example~\ref{ex:L}) and number of leaves 
(Example~\ref{ex:numL}) might be undefined (infinite) for an $\mathbb{R}$-tree. 
We introduce in Sect.~\ref{sec:Rprune} a {\it mass} function that can serve as 
a natural general analog of these and other finite tree functions. 
We show, in particular, that pruning my mass is equivalent to the pruning
by the total tree lengths in the particular situation of ballistic
annihilation model discussed in this paper.
Accordingly, our results are not limited to finite trees and should be straightforwardly
extended to $\mathbb{R}$-trees that appear, for instance, with other initial potentials.

\subsection{Relation to other generalizations of pruning}
After completing the initial stage of this work, we have learned that a 
pruning operation similar in spirit to our generalized dynamical pruning 
(defined in Subsection \ref{sec:pruning}) 
was considered in a preprint by Duquesne and Winkel \cite{Winkel2012} that extended a formalism by Evans \cite{Evans2005} and Evans et al.~\cite{EPW06}. 
We notice that the two definitions of pruning, ours in Subsection \ref{sec:pruning} and that in \cite{Winkel2012}, are principally different, despite their similar appearance.  
In essence, the work \cite{Winkel2012} assumes the Borel measurability with respect to the Gromov-Hausdorff metric (\cite{Winkel2012}, Section 2), which implies the semigroup property of the respective pruning (\cite{Winkel2012}, Lemma 3.11).
On the contrary, the generalized dynamical pruning of this study may have the semigroup property only 
under very particular choices of $\varphi(T)$; see Examples 1 and 2 above. 
The majority of natural choices of $\varphi(T)$, including the tree length $\varphi(T) = \textsc{length}(T)$
(Example 3) or the number of leaves in a tree (Example 4), do not have the semigroup property, and hence
are not covered by the pruning of \cite{Winkel2012}.
The main application results of this work (Section~\ref{sec:annihilation}) refer to the pruning by $\varphi(T)=\textsc{length}(T)$ 
that has no semigroup property.

Curiously, for the above two examples with no semigroup property, i.e., when $\varphi(T) = \textsc{length}(T)$ and when $\varphi(T)$ equals the number of leaves in $T$, the following discontinuity property holds with respect to the Gromov-Hausdorff metric  $d_{\sf GH}$ defined in \cite{Evans2005,EPW06,Winkel2012}. For any $\epsilon>0$ and any $M>0$, there exist trees $T$ and $T'$ in $\L$ such that 
$$|\varphi(T) -\varphi(T')|>M ~~\text{ while }~~ d_{\sf GH}(T,T') <\epsilon .$$
Indeed, if $\varphi(T) = \textsc{length}(T)$, we consider a tree $T$ with the number of leaves exceeding $M/\epsilon$, and let $T'$ be the tree obtained from $T$ by elongating each of its leaves by $\epsilon$. Similarly, if  $\varphi(T)$ is the number of leaves in $T$, we construct $T'$ from $T$ by attaching at least $M/\epsilon$ new leaves, each of length $\epsilon$.

\subsection{Invariance with respect to generalized dynamical pruning}
\label{sec:pi}
Consider a tree $T\in\L$ with edge lengths given
by a positive vector $l_T=(l_1,\dots,l_{\#T})$.
The edge length vector $l_T$ can be specified by
distribution $\chi(\cdot)$ of a point $x_T=(x_1,\dots,x_{\#T})$ on the standard simplex
\[\Delta^{\#T}=\left\{x_i:\sum_i^{\#T} x_i = 1, 0<x_i\le 1\right\},\] 
and conditional distribution $F(\cdot|x_T)$ of the tree length
$\textsc{length}(T)$, so that
\[l_T = x_T\times\textsc{length}(T).\]
Accordingly, a tree $T$ can be completely specified by its planar shape,
a vector of proportional edge lengths, and the total tree length:
\[T=\left\{\textsc{p-shape}(T),x_T,\textsc{length}(T)\right\}.\]
A measure $\eta$ on $\L$ is a joint distribution of 
these three components:
\[\eta(T\in\{\tau,d\bar x,d\ell\}) = 
\mu(\tau)\times \chi_{\tau}(d\bar x)\times  F_{\tau,\bar x}(d\ell),\]
where the tree planar shape is specified by
\[\mu(\tau)={\sf Law }\left(\textsc{p-shape}(T)=\tau\right),\quad \tau\in\cT_{\rm plane},\]
the relative edge lengths is specified by 
\[\chi_\tau(\bar x) 
={\sf Law }\left(x_T=\bar{x} \,|\,\textsc{p-shape}(T)=\tau \right),\quad \bar x\in \Delta^{\#T},\]
and the total tree length is specified by
\[ F_{\tau, \bar x} (\ell) ={\sf Law }\left(\textsc{length}(T)=\ell \,|\,x_T=\bar{x}, ~ \textsc{p-shape}(T)=\tau \right),\quad \ell\ge0.\]
Let us fix $t\ge 0$ and a function $\varphi:\L \rightarrow \mathbb{R}$ that is monotone 
non decreasing with respect to the partial order $\preceq$.
We denote by $\S^{-1}(\varphi,T)$ the preimage of a tree $T\in\L$ under the generalized dynamical
pruning:
\[\S^{-1}(\varphi,T)=\{\tau\in\L:\S(\varphi,\tau)=T\}.\]

\noindent Consider the distribution of edge lengths induced by the pruning:
$$\Xi_\tau(\bar x) 
={\sf Law }\left(x_{\tilde T}=\bar{x}  \,|\,\textsc{p-shape}\big(\tilde T\big)=\tau \right)$$
and
$$\Phi_{\tau, \bar x} (\ell)
={\sf Law }\left(\textsc{length}\big(\tilde T\big)=\ell \,|\,x_{\tilde T}=\bar{x}, ~\textsc{p-shape}\big(\tilde T\big)=\tau \right),$$
where the notation $\tilde T:=\S(\varphi,T)$ is used for brevity. 

\begin{Def}[{{\bf Prune invariance}}]\label{def:pi}
Let fix $t\ge 0$ and a function $\varphi:\L \rightarrow \mathbb{R}^+$ that is 
monotone non decreasing with respect to the partial order $\preceq$.
We call a measure $\eta$ on $\L$ {\it invariant with respect to pruning $\S(\varphi,\cdot)$} 
(or simply prune invariant) if the following conditions hold 
\begin{itemize}
\item[(i)]
The measure is prune invariant in planar shapes.
This means that for $\nu=\mu\circ\S^{-1}(\varphi,\cdot)$ we have
\[\mu(\tau)=\nu(\tau|\tau\ne\phi).\]
\item[(ii)]
The measure is prune invariant in edge lengths.
This means that for any 
combinatorial planar tree $\tau\in\cT_{\rm plane}$
\[\Xi_\tau(\bar x)=\chi_\tau(\bar x)\]
and there exists a {\it scaling exponent} $\zeta\equiv \zeta(\varphi,t)>0$ such that for any 
 relative edge length vector
$\bar x\in\Delta^{\#\tau}$ we have 
\[\Phi_{\tau, \bar x} (\ell)=\zeta^{-1}F_{\tau, \bar x} \left(\frac{\ell}{\zeta}\right).\]
\end{itemize}
\end{Def}

This definition unifies multiple invariance properties examined in the literature.
For example, the classical work by Neveu~\cite{Neveu86} establishes prune invariance 
of critical Galton-Watson trees with i.i.d. exponential edge lengths with respect to
tree erasure from leaves at a unit rate, which is equivalent to the
generalized dynamical pruning with function $\varphi(T)=\textsc{height}(T)$ 
(see Example~\ref{ex:height}).
Prune invariance with respect to the Horton pruning (see Example~\ref{ex:H}
and Fig.~\ref{fig:HST}) has been established by Burd et al.~\cite{BWW00}
for the combinatorial binary critical Galton-Watson trees with no edge lengths.
Duquesne and Winkel \cite{Winkel2012} established prune-invariance of 
critical Galton-Watson trees with i.i.d. exponential edge lengths with
respect to so-called {\it hereditary property}, which includes the
tree erasure of Example~\ref{ex:height} and Horton pruning of Example~\ref{ex:H}.
The authors recently introduced a one-parameter family of 
{\it critical Tokunaga trees} \cite{KZ18} that are prune invariant with 
respect to the Horton pruning; this model includes the critical binary Galton-Watson tree
with i.i.d. exponential lengths as a special case.
Section~\ref{sec:PI} below establishes prune invariance of critical binary Galton-Watson trees
with i.i.d. exponential edge lengths with respect to arbitrary generalized pruning.

%%%%%%%%%%%%%%%%%%%%%%%%%%%%%%%%%%%%%%%%%%%%%%%%
\section{Dynamical pruning for exponential critical Galton-Watson trees}
\label{sec:GWT}
We review here the results on tree representation of continuous functions, 
following \cite{LeGall93,NP,Pitman,ZK12,KZ18}.

\subsection{Harris path}
\label{sec:Harris}
The {\it Harris path} of a tree $T\in\L$ 
is defined as a piece-wise linear function \cite{Harris,Pitman} 
\[H_T(t)\,:\,[0,2\cdot\textsc{length}(T)]\to\mathbb{R}\]
that equals the distance from the root traveled along the tree $T$ 
in the depth-first search, as illustrated in Fig.~\ref{fig:Harris}.
For a tree $T$ with $n$ leaves, the Harris path 
$H_T(t)$ is a piece-wise linear positive excursion
that consists of $2n$ linear segments with alternating slopes $\pm 1$, see \cite{Pitman}.

\begin{figure}[h] %[p] [t]
\centering\includegraphics[width=0.8\textwidth]{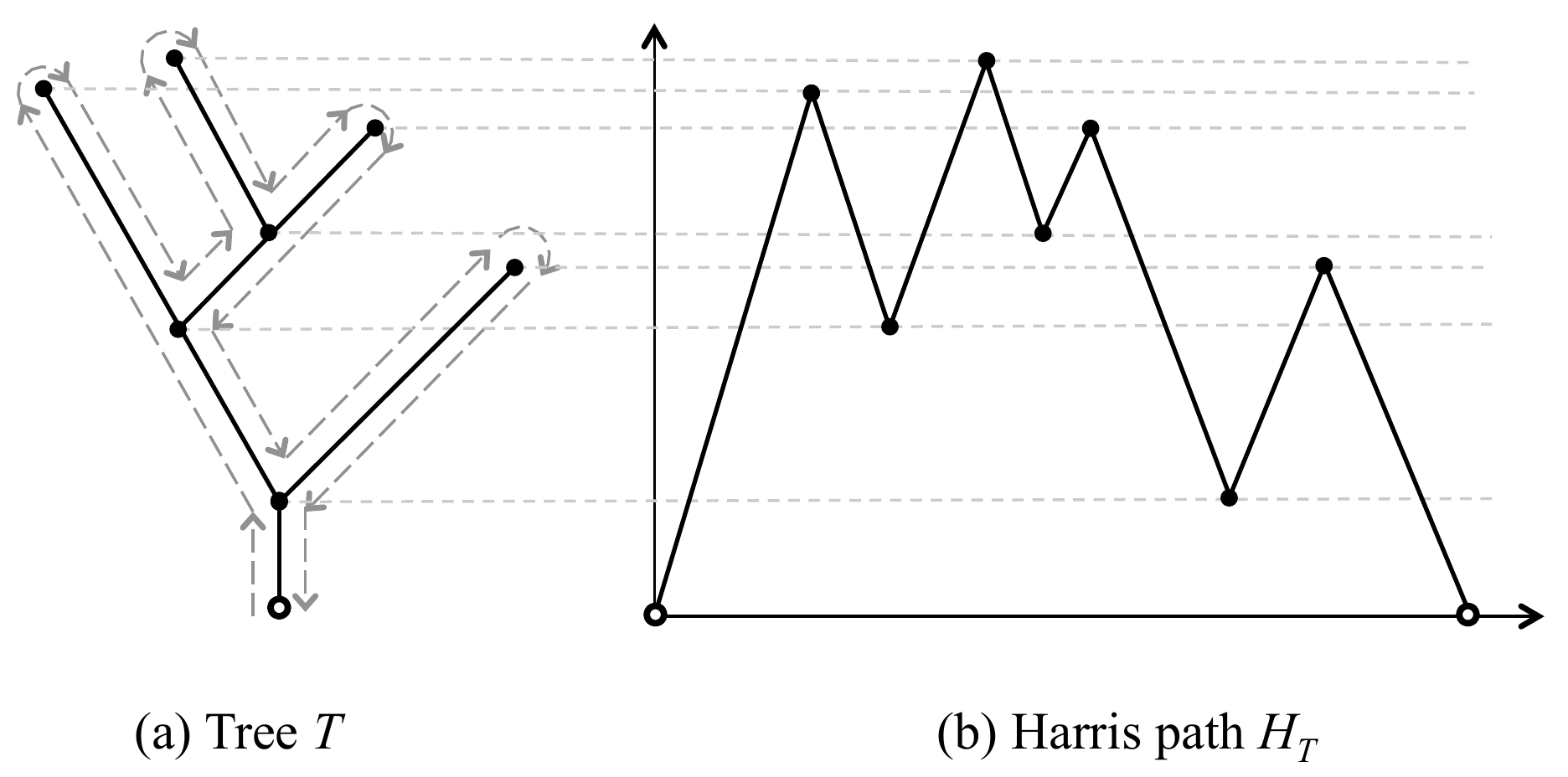}
\caption[Harris path]
{(a) Tree $T$ and its depth-first search illustrated by dashed arrows.
(b) Harris path $H_T(t)$ for the tree $T$ of panel (a). }
\label{fig:Harris}
\end{figure} 

\subsection{Level set tree}
\label{sec:level}
Consider a continuous function $X_t$, $t\in[a,b]$ with a finite number of
local minima.
The level set $\mathcal{L}_{\alpha}\left(X_t\right)$ is defined 
as the pre-image of the function values above $\alpha$: 
\[\mathcal{L}_{\alpha}\left(X_t\right) = \{t\,:\,X_t\ge\alpha\}.\]
The level set $\mathcal{L}_{\alpha}$ for each $\alpha$ is
a union of non-overlapping intervals; we write 
$|\mathcal{L}_{\alpha}|$ for their number.
Notice that 
$|\mathcal{L}_{\alpha}| = |\mathcal{L}_{\beta}|$ 
as soon as the interval $[\alpha,\,\beta]$ does not contain a value of
local extrema of $X_t$; and  
$0\le |\mathcal{L}_{\alpha}| \le n$, where $n$ is the number 
of the local maxima of $X_t$.
As the threshold $\alpha$ decreases, the new intervals appear
and the existing intervals merge. 
The {\it level set tree} $\textsc{level}(X_t)\in\cL_{\rm plane}$ is a 
tree that describes the topology of the level sets $\mathcal{L}_{\alpha}$ 
as a function of threshold $\alpha$, as illustrated in Fig.~\ref{fig:LST}.

\begin{figure}[h] %[p] [t]
\centering\includegraphics[width=0.8\textwidth]{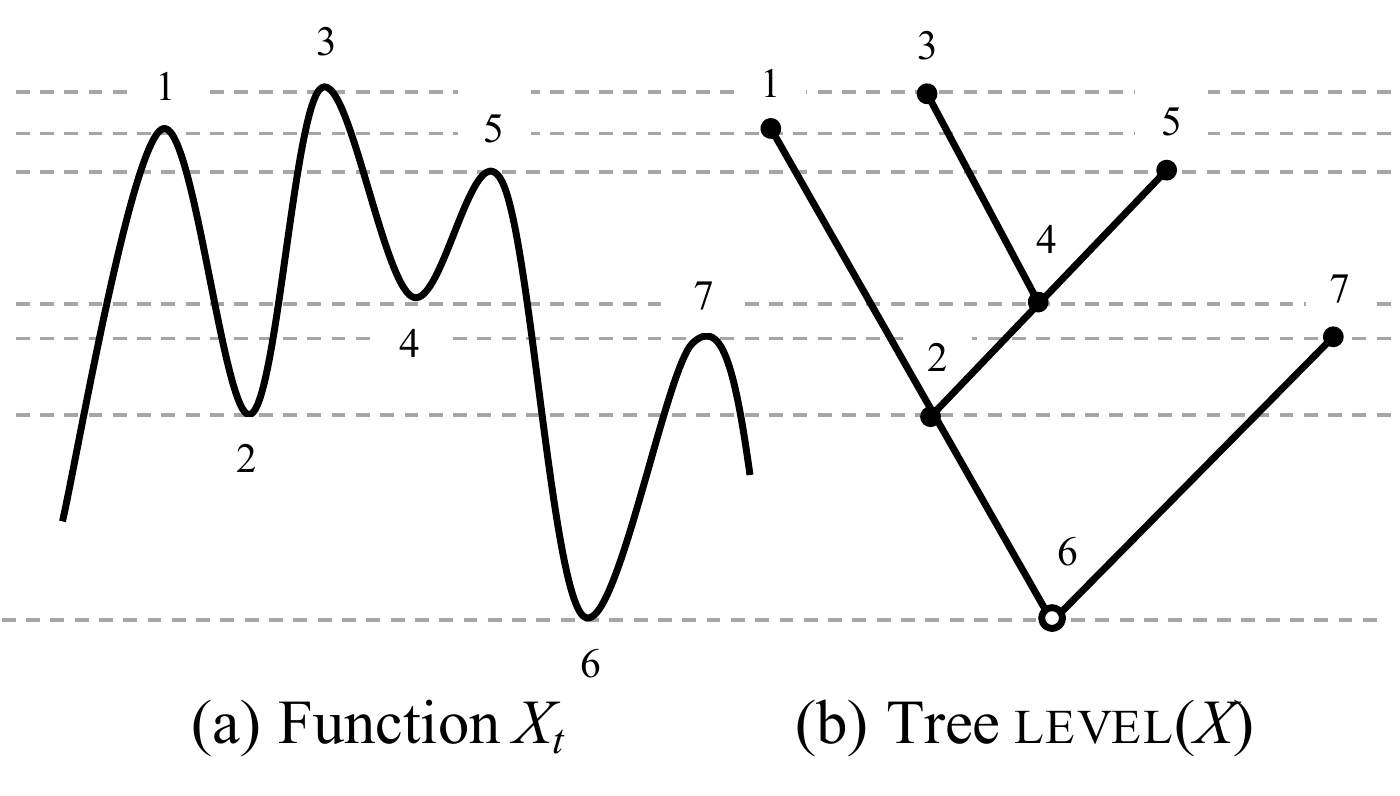}
\caption{Function $X_t$ (panel a) with a finite number of local
extrema and its level-set tree $\textsc{level}(X)$ (panel b).
The local extrema of $X_t$ and the respective vertices of $\textsc{level}(X_t)$
are labelled by numbers 1 to 7.
Local maxima (odd numbers) correspond to leaves,
and local minima (even numbers) correspond to internal vertices of $\textsc{level}(X_t)$.
The global minima of $X_t$ coincides with one of the internal local minima (label 6),
so the level set tree is unplanted (root has degree 2). 
}
\label{fig:LST}
\end{figure}

Next we formally specify the combinatorial and metric structure of 
the level set tree, as well as its planar embedding.

{\bf Combinatorial structure of} $\textsc{level}(X_t)$.
Assume that there exist $n$ local maxima of $X_t$,
including possible local maxima at the boundaries of the interval $[a,b]$.
Then there exist $(n-1)$ internal minima, excluding possible local minima
at the boundaries.
Denote by $t_1<t_2<\dots<t_{2n-1}$ the ordered times of these local extrema. 
The level set tree $\textsc{level}(X_t)$ has $2n-1$ vertices: 
$n$ leaves that correspond (one-to-one) to the local maxima of $X_t$,
and 
$n^{\rm int}\le n-1$ non-root internal vertices that correspond (one-to-one) 
to the {\it chains} of equally-valued local minima of $X_t$.
Formally, we say that two consecutive local minima at $t_i$ and $t_{i+2}$ belong to the 
same chain if $X_{t_i}=X_{t_{i+2}}$.

The tree root corresponds to the global minimum of $X_t$ on $[a,b]$.
If the global minimum is reached at the boundary ($t=a$ or $t=b$), then the 
root has degree 1 and the level set tree is planted;
in this case the vertex that corresponds to the lowest
local minimum within $(a,b)$ is connected to the root.
Otherwise, when the global minimum coincides with one of the local minima, 
the root corresponds to that local minimum, it has degree 2 and the 
level set tree is unplanted. 
This situation is illustrated in Fig.~\ref{fig:LST}.

For every vertex $i$, except the one that corresponds to the lowest internal minimum of $X_t$
and was discussed above,
the parental vertex ${\sf parent}(i)$ 
corresponds to the maximal of the two local minima adjacent to and below $X_{t_i}$.
Formally, let $s_j$, $1\le j\le n-1$, denote the times of local minima within $(a,b)$.
For every local extrema $i$, except the lowest internal minimum, we define 
its right and left lower adjacent local minima: 
\[r_i = \min_{1\le j\le n-1}\{s_j: s_j>t_i \text{ and }X_{s_j}<X_{t_i}\},\]
\[l_i = \max_{1\le j\le n-1}\{s_j: s_j<t_i \text{ and } X_{s_j}<X_{t_i}\}.\]
It is understood that one of $r_i, l_i$ can be empty. In particular,
$r_i=\emptyset$ for the rightmost internal minimum,
$l_i=\emptyset$ for the leftmost internal minimum of $X_t$.
We write $j\in {\sf chain}(k)$ to denote that local minimum at $s_j$ belongs 
to chain $k$ for some $1\le k\le n^{\rm int}$.
We now define
\[{\sf parent}(i) = \{k: s_j = \arg\max(X_{r_i},X_{l_i}) \text{ and } j\in {\sf chain}(k) \},\]
with a convention that $X_{\emptyset}=\emptyset$ and $\max(a,\emptyset)=a$ for any $a\in\mathbb{R}$. 

The chains of equally-valued local minima correspond to non-binary
trees: the degree of a vertex that corresponds to a chain that includes $k$ local 
minima is $k+1$.
The functions with distinct values of local minima correspond to binary trees.

{\bf Metric structure of} $\textsc{level}(X_t)$.
We specify the metric structure by assigning the edge lengths
\[|e_i|\equiv l_i = |X_{t_i} - X_{t_{{\sf parent}(i)}}|\]
to all edges $e_i$.

{\bf Planar embedding of} $\textsc{level}(X_t)$.
The planar embedding (ordering) of the offspring of the same parent vertex 
coincides with that of the time instants $t_i$ of the respective local 
extrema of $X_t$.

\bigskip

By construction, the level-set tree $\textsc{level}(X_t)$ is completely
determined by the sequence of the values of local extrema of $X_t$.
In particular, if $g(t)$ is a continuous and monotone increasing function on 
$[a,b]$, then the time transformation of $X_t$ by function $g(t)$ does not
disturb the level set tree:
\[\textsc{level}(X_t)=\textsc{level}\left(X_{g(t)}\right).\]
Hence, without loss of generality we can focus on the level set trees
of continuous functions with alternating slopes $\pm 1$.
To ensure that the level set tree of a function is binary, we need 
to eliminate the chains of equally-valued minima.
This, for instance, is achieved if the distribution 
of lengths of linear segments has no atoms.
We denote by $\cE^{\rm ex}$ the space of finite piece-wise linear positive 
continuous excursions with alternating slopes $\pm 1$, atomless 
segment length distribution, and a finite number of segments.

By construction, the level set tree and Harris path are reciprocal
to each other as described in the following statement.

\begin{lem}[{{\bf Reciprocity of Harris path and level set tree}}]
\label{lem:recip}
The Harris path
$H: \L^{|}\to \cE^{\rm ex}$ and the level set tree
${\textsc{level}}: \cE^{\rm ex}\to\L^{|}$ are reciprocal
to each other.
This means that for any $T\in\L^{|}$ we have
$\textsc{level}(H_T(t))\equiv T,$
and for any $Y_t\in\cE^{\rm ex}$ we have
$H_{\textsc{level}(Y_t)}(t)\equiv Y_t.$
\end{lem}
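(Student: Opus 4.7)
The plan is to establish both identities by induction on the number of leaves of $T$ (equivalently, the number of local maxima of $Y_t$), exploiting the recursive stem-plus-two-subtrees decomposition of a planted reduced binary tree and the matching decomposition of its Harris path at the interior global minimum. The base case is a one-edge stem of length $\ell$, whose Harris path is the triangular excursion $H(t) = \min(t, 2\ell - t)$ on $[0, 2\ell]$; the level-set tree of this excursion is itself a one-edge stem of length $\ell$, and both identities are immediate.

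For the forward direction $\textsc{level}(H_T) \equiv T$, I would write a planted tree $T \in \L^{|}$ as a stem of length $\ell_0$ joining the root $\rho$ to the first branching vertex $v$, which carries left and right descendant subtrees $T_L, T_R$. By the definition of depth-first traversal, $H_T$ decomposes as a linear ascent from $0$ to $\ell_0$, followed by the translated excursion $\ell_0 + H_{T_L}$ taking values $\ge \ell_0$, followed by the translated excursion $\ell_0 + H_{T_R}$, followed by a linear descent from $\ell_0$ back to $0$. The unique interior local minimum of $H_T$ at height $\ell_0$ is attained at the junction between the two translated sub-excursions, and every other interior local extremum lies strictly above $\ell_0$. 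The construction of Section~\ref{sec:level} therefore produces a root connected by a stem of length $\ell_0$ to the vertex corresponding to this lowest internal minimum, from which the level-set trees of the two translated sub-excursions hang as left and right subtrees; by the inductive hypothesis these coincide, as planted metric planar trees, with $T_L$ and $T_R$.

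For the reverse direction $H_{\textsc{level}(Y)} \equiv Y$, I would mirror the argument on the excursion side. For $Y \in \cE^{\rm ex}$ with at least one interior local minimum, the atomless segment-length condition guarantees that the interior infimum $m$ of $Y$ is attained at a unique point, which splits $Y$ into two shifted sub-excursions $Y_L$ and $Y_R$, each starting and ending at height $m$. The level-set construction yields a stem of length $m$ terminating at a branching vertex $v$, from which $\textsc{level}(Y_L)$ and $\textsc{level}(Y_R)$ hang as left and right subtrees; by induction $H_{\textsc{level}(Y_L)} = Y_L$ and $H_{\textsc{level}(Y_R)} = Y_R$, and re-assembling these Harris paths via the stem-plus-subtrees rule recovers $Y$ verbatim.

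The step requiring the most care is the verification in the forward direction that the parent-child rule used in the definition of $\textsc{level}(H_T)$ --- selecting the maximum of the two local minima of $H_T$ adjacent to and below a given extremum --- really reproduces the parent relation in $T$. The key local picture is: for any non-root vertex $u$ of $T$, the associated local extremum of $H_T$ lies at height $d(u, \rho)$, and moving left and right in time, the nearest local minima of $H_T$ with value strictly below $d(u, \rho)$ correspond to ancestors of $u$. A short case analysis (depending on whether $u$ is a left or right child, and whether its ancestors are themselves first or second children) shows that one of these adjacent lower minima sits at height $d({\sf parent}(u), \rho)$ while the other, if non-empty, is strictly smaller and belongs to a more distant ancestor; hence the maximum-rule selects ${\sf parent}(u)$. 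With this identification and the DFS-induced planar embedding installed, matching of edge lengths is immediate from the definitions of $H_T$ and $\textsc{level}$.
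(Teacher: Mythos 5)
Your argument is sound, but note that the paper itself offers no proof of this lemma: it is asserted to hold ``by construction'' from the definitions of the Harris path and the level set tree, so there is no in-paper argument to compare against. Your induction on the number of leaves --- decomposing a planted tree into a stem plus two subtrees and, dually, splitting the excursion at its lowest interior local minimum --- is the standard way to make that assertion rigorous, and both directions go through; the step you rightly single out as delicate (that the adjacent-lower-minima rule in the definition of $\textsc{level}$ reproduces the parent relation of $T$) is indeed the only place where a genuine verification is needed. Two points deserve tightening. In the reverse direction, the splitting level is the lowest \emph{interior local minimum} of $Y_t$, not the ``interior infimum'' (which is $0$ for an excursion); the pieces $Y_L,Y_R$ are the portions of $Y_t$ above that level $m$, and you must also observe that the part of $Y_t$ below level $m$ consists of a single initial ascent and a single final descent (any local maximum there would force an interior local minimum below $m$), so that stem $+$ $Y_L$ $+$ $Y_R$ $+$ descent reassembles to $Y_t$ verbatim; uniqueness of the splitting point uses distinctness of the local-minima values, which is what the atomless-segment-length clause in the definition of $\cE^{\rm ex}$ is meant to guarantee. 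In the forward direction, you should also check that $\textsc{level}(H_T)$ is binary, i.e.\ that no chain of equal-valued \emph{consecutive} local minima arises: the two local minima adjacent to the local maximum of $H_T$ corresponding to a leaf $u$ are the most recent common ancestors of $u$ with its depth-first neighbours, which are distinct comparable ancestors of $u$ and hence lie at different depths. With these remarks added, your proof is complete.
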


\subsection{Exponential critical binary Galton-Watson tree ${\sf GW}(\lambda)$}
\label{erw}

Recall that a (combinatorial) critical binary Galton-Watson tree $T\in\cT$ describes a 
trajectory of the Galton-Watson branching process.
The process starts with a single progenitor (tree root) at time $t=0$. 
At each discrete time step every existing population member terminates and 
produces, equiprobably, either no or two offspring, independently of the other members. 
We denote the resulting tree distribution on $\cT$ by $\mathcal{GW}^{\rm crit}$.

\begin{Def} [{{\bf Exponential critical binary Galton-Watson tree}}]
\label{def:binary}
We say that a random tree 
$T\in\L^{|}$ is
an exponential critical binary Galton-Watson tree with parameter $\lambda>0$, 
and write $T\stackrel{d}{=}{\sf GW}(\lambda)$,
if  
\begin{itemize}
\item[(i)] \textsc{shape}($T$) is a critical binary Galton-Watson tree $\mathcal{GW}^{\rm crit}$,
\item[(ii)] the orientation for every pair of siblings in $T$ is random and symmetric (e.g., in each pair of siblings, 
a randomly and uniformly selected sibling is assigned a right orientation, and the other is assigned a left orientation) 
\item[(iii)] given \textsc{shape}($T$), the edges of $T$ are sampled as independent 
exponential random variables with parameter $\lambda$, i.e., with density
\be
\label{exp}
\phi_\lambda (x)=
\lambda e^{-\lambda x} {\bf 1}_{\{x\ge0\}}.\\ 
\ee
\end{itemize}
\end{Def}

The following result is well-known.
\begin{thm}{\rm \cite[Lemma 7.3]{Pitman},\cite{LeGall93,NP}}
\label{Pit7_3}
Consider a random excursion $X_t\in\cE^{\rm ex}$.
The level set tree $\textsc{level}(X_t)$ 
is an exponential critical binary Galton-Watson tree ${\sf GW}(\lambda)$ 
if and only if the rises and falls of $X_t$, excluding the last fall, are 
distributed as independent exponential random variables with parameter $\lambda/2$.
\end{thm}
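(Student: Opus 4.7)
The plan is to match joint densities on the excursion space $\E$ via the level-set-tree/Harris-path bijection of Lemma~\ref{lem:recip}. I would parametrize an element $X_t \in \E$ with $n$ rises by the vector $(R_1, F_1, R_2, F_2, \ldots, R_{n-1}, F_{n-1}, R_n)$ of its linear segment lengths, since the terminal fall is then constrained by the excursion condition to $F_n = \sum_i R_i - \sum_{i<n} F_i$. This vector ranges over the subset of $\mathbb{R}_+^{2n-1}$ cut out by positivity of intermediate partial sums, and this region further decomposes into disjoint open cones $C_{\tilde\tau}$ indexed by the planar shapes $\tilde\tau$ with $n$ leaves.

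For the direction $(\Rightarrow)$, fix such a $\tilde\tau$. Following the depth-first traversal of a tree $T \in \L^{|}$ with $\textsc{p-shape}(T)=\tilde\tau$, each $R_i$ and $F_i$ becomes an explicit linear combination of the $2n-1$ edge lengths of $T$; I would verify that (i) the resulting linear map from edge lengths to the rise/fall vector is a bijection $\mathbb{R}_+^{2n-1} \to C_{\tilde\tau}$ with Jacobian $\pm 1$, and (ii) $\sum_i R_i$ equals the total tree length $\sum_j \ell_j$ because the Harris path climbs each edge exactly once. Since conditional on the shape the edges of ${\sf GW}(\lambda)$ are i.i.d.\ $\exp(\lambda)$, the conditional density of $(R_1,\ldots,R_n)$ is $\lambda^{2n-1}\exp\!\bigl(-\lambda\sum_i R_i\bigr)$ on $C_{\tilde\tau}$.

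Combining with the combinatorial identity ${\sf P}(\textsc{p-shape}(T)=\tilde\tau) = (1/2)^{2n-1}$, uniform in planar $\tilde\tau$ with $n$ leaves (critical GW weight $(1/2)^{2n-1}$ per unordered shape times the uniform sibling orientations; the automorphism factor $2^m$ from $m$ symmetric branchings exactly cancels the $2^{n-1-m}$ planar embeddings), the unconditional density on the $n$-rise excursion region becomes $(\lambda/2)^{2n-1}\exp\!\bigl(-\lambda\sum_i R_i\bigr)$. A short calculation shows this is also the density arising from the construction ``take $\xi_1,\xi_2,\ldots$ i.i.d.\ $\exp(\lambda/2)$, interleave as alternating rises and falls, stop at the first index $2n$ where $\xi_{2n}\ge S_{2n-1}$, and truncate $\xi_{2n}$ to $S_{2n-1}$'': integrating out the truncated tail supplies a factor $e^{-(\lambda/2)S_{2n-1}}$, and the algebraic identity $\sum_{j=1}^{2n-1}\xi_j + S_{2n-1} = 2\sum_i R_i$ collapses the exponent to $-\lambda\sum_i R_i$. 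Since the two models yield the same density on every $n$-rise event, they coincide as laws on $\E$, proving both directions of the equivalence simultaneously.

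The principal obstacle is the uniform bookkeeping over shapes: checking the Jacobian-$\pm 1$ bijectivity of the edge-length/rise-fall map for every $\tilde\tau$, and reconciling the unordered GW weights with the uniform orientations (including subtle cancellations at symmetric subtrees) to obtain the flat planar weight $(1/2)^{2n-1}$. A clean route is induction on $n$: splitting $T$ at its first branching below the stem of length $a \sim \exp(\lambda)$ into two independent ${\sf GW}(\lambda)$ subtrees, whose shifted Harris paths concatenate to form $H_T$ with $a$ absorbed into the first rise and the last fall, and then applying the inductive hypothesis to the subtrees to deduce the joint density for $T$.
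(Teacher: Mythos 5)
The paper does not actually prove Theorem~\ref{Pit7_3}: it is stated as a known result and attributed to \cite[Lemma 7.3]{Pitman} and \cite{LeGall93,NP}, so there is no in-paper argument to compare yours against. Judged on its own merits, your density-matching proof is correct and is essentially the classical argument written out self-containedly. The three ingredients all check out: (i) the depth-first search traverses each edge exactly once upward and once downward, so the rises partition the edge set (giving $\sum_i R_i=\textsc{length}(T)$), and the change of variables from edge lengths to $(R_1,F_1,\dots,R_n)$ is indeed unimodular --- the cleanest verification is to pass through the vector of local-extrema values of the Harris path, against which both parametrizations are triangular, respectively tree-incidence, integer maps with determinant $\pm1$; (ii) every planar shape with $n$ leaves has probability $(1/2)^{2n-1}$ under Definition~\ref{def:binary}, which is exactly the weight $C_n/2^{2n+1}$ (for $n+1$ leaves) that the paper itself uses in Lemma~\ref{lem:ell}, so your automorphism cancellation, though compressed, lands on the right answer; (iii) the exponent identity $\sum_{j\le 2n-1}\xi_j+S_{2n-1}=2\sum_iR_i$ collapses both densities to $(\lambda/2)^{2n-1}e^{-\lambda\sum_iR_i}$ on each cone $C_{\tilde\tau}$, and Lemma~\ref{lem:recip} converts equality of excursion laws into the stated equivalence in both directions. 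Two points deserve explicit treatment in a final write-up: the unimodularity claim in (i), which you currently only promise to verify, and the precise meaning of the hypothesis on the excursion law --- your stopped-walk reading (i.i.d.\ $\exp(\lambda/2)$ alternating segments with the terminal fall truncated at the first return to zero) is the intended one and is what makes the statement consistent with the Corollary on excursions of the symmetric exponential random walk that follows the theorem.
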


Consider a random walk $\{X_k\}_{k\in\mathbb{Z}}$ with a 
homogeneous transition kernel $p(x,y)\equiv p(x-y)$, for any $x,y\in\mathbb{R}$,
given by a mixture of exponential jumps (Laplace distribution): 
\be
\label{laplace}
p(x)={\phi_{\lambda}(x)+\phi_{\lambda}(-x)\over 2} = 
{\lambda \over 2}e^{-\lambda|x|},\quad-\infty<x<\infty.
\ee
This process is called a {\it symmetric exponential random walk} with parameter $\lambda$.
Each symmetric exponential random walk with parameter $\lambda$ corresponds to a 
piece-wise linear continuous function $\{X_t\}_{t\in\mathbb{R}}$ with slopes $\pm1$
whose alternating rises and falls, taken from $\{X_k\}_{k\in\mathbb{Z}}$, 
have independent exponential lengths with parameter $\lambda/2$.
Specifically, consider a piece-wise linear function that interpolates the local
extrema of $X_k$; then transform the time in such a way that the slopes of 
the linear interpolation are $\pm 1$. 
There is one-to-one correspondence between the infinite sequences of the values 
of local extrema of $\{X_t\}_{t\in\mathbb{R}}$ and $\{X_k\}_{k\in\mathbb{Z}}$.
We refer to such a function as a symmetric exponential random walk with
parameter $\lambda/2$ on $\mathbb{R}$.

\begin{cor}
The Harris path $H_{{\sf GW}(\lambda)}$ of an exponential critical binary 
Galton-Watson tree with parameter $\lambda$ is an excursion of a symmetric 
exponential random walk  $\{X_t\}_{t\in\mathbb{R}}$ with parameter $\lambda/2$.
\end{cor}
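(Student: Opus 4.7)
The plan is to extract this corollary as a direct consequence of Theorem~\ref{Pit7_3} (the ``only if'' direction of the Pitman characterization) combined with the reciprocity Lemma~\ref{lem:recip}. Since ${\sf GW}(\lambda)$ is critical, it is almost surely finite, so its Harris path $H_T$ is an a.s.\ well-defined element of $\cE^{\rm ex}$, and the result will follow from matching this path against the definition of a symmetric exponential random walk on $\mathbb{R}$ given just above the corollary.

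First I would invoke Lemma~\ref{lem:recip} with $T\stackrel{d}{=}{\sf GW}(\lambda)$ to obtain the identity $\textsc{level}(H_T)\equiv T$, so that the level set tree of the (random) Harris path is itself distributed as ${\sf GW}(\lambda)$. Next I would apply Theorem~\ref{Pit7_3} in the direction that goes from the level set tree distribution to the law of rises and falls: because $\textsc{level}(H_T)\stackrel{d}{=}{\sf GW}(\lambda)$, the successive rises and falls of the excursion $H_T$, excluding the final fall, are independent and exponentially distributed with parameter $\lambda/2$. The final fall is then determined by the constraint that $H_T$ must return to $0$ at time $2\cdot\textsc{length}(T)$, which is precisely the excursion constraint used in the paragraph defining the symmetric exponential random walk on $\mathbb{R}$.

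It then remains only to check that this coincides with the notion of an excursion of a symmetric exponential random walk $\{X_t\}_{t\in\mathbb{R}}$ with parameter $\lambda/2$ as constructed in the excerpt: by definition that walk is the piece-wise linear $\pm 1$-slope interpolation whose consecutive rise and fall lengths are i.i.d.\ exponential with parameter $\lambda/2$. An excursion of such a walk is by construction a positive $\pm 1$-slope piece-wise linear path starting and ending at $0$ whose interior rise and fall lengths share exactly this i.i.d.\ exponential$(\lambda/2)$ law (with the last fall conditioned to return the path to $0$). Comparing with the previous step identifies $H_T$ in distribution with such an excursion.

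There is no real obstacle here; the only subtle point, worth a brief sentence, is to justify why the ``excluding the last fall'' qualifier in Theorem~\ref{Pit7_3} is consistent with the description of the symmetric exponential random walk excursion, rather than a genuine discrepancy. This is resolved by the elementary observation that in any piece-wise linear $\pm 1$-slope positive excursion the last fall length is a deterministic function of the preceding rises and falls (namely, whatever height remains above $0$), so conditioning on returning to $0$ leaves the interior increments with their unconditional i.i.d.\ exponential$(\lambda/2)$ law, matching both the hypothesis of Theorem~\ref{Pit7_3} and the definition of a symmetric exponential random walk excursion.
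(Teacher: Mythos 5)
Your proposal is correct and follows the route the paper clearly intends (the corollary is stated without proof as an immediate consequence of Theorem~\ref{Pit7_3}, Lemma~\ref{lem:recip}, and the definition of the symmetric exponential random walk given just above it). Your closing remark about the last fall being determined by the preceding rises and falls is exactly the right observation to reconcile the ``excluding the last fall'' clause of Theorem~\ref{Pit7_3} with the excursion description.
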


%%%%%%%%%%%%%%%%%%%%%%%%%%%%%%%%%%%%%%%%%%%%
\subsection{Length of a random tree ${\sf GW}(\lambda)$} 
Recall the modified Bessel functions of the first kind
$$I_\nu(z)=\sum\limits_{n=0}^\infty {\left({z \over 2}\right)^{2n+\nu} \over \Gamma(n+1+\nu)\, n!}.$$

\begin{lem}\label{lem:ell}
Suppose $T\stackrel{d}{=}{\sf GW}(\lambda)$ is an exponential critical binary Galton-Watson tree
with parameter $\lambda$.
The {\it total length} of the tree $T$ has the probability density function 
\begin{equation} \label{eq:pdfs}
\ell(x) = {1 \over x}  e^{-\lambda x}  I_1 \big(\lambda x\big), \quad x>0. 
\end{equation}
\end{lem}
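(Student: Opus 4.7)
The plan is to condition on the number of leaves $N$ of $T$ and mix the resulting Gamma densities.

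\textbf{Leaf count.} For the critical binary Galton--Watson shape with equiprobable offspring in $\{0,2\}$, a fixed shape with $n$ leaves has $n-1$ branching vertices and thus probability $2^{-(2n-1)}$; since the number of such (planar) shapes equals the Catalan number $C_{n-1}=\binom{2n-2}{n-1}/n$, I would obtain ${\sf P}(N=n)=C_{n-1}/2^{2n-1}$ for $n\ge 1$. Normalization $\sum_{n\ge 1}C_{n-1}/2^{2n-1}=1$ serves as a sanity check via the Catalan generating function $(1-\sqrt{1-4z})/(2z)$ evaluated at $z=1/4$.

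\textbf{Conditional length.} Under the planted convention of Definition~\ref{def:binary}, when $N=n$ the tree $T$ carries $2n-1$ edges (a stem together with the $2n-2$ edges of the branching part), each an independent $\text{Exp}(\lambda)$ variable by item (iii). Hence
\[
\textsc{length}(T)\,\mid\,N=n \;\sim\; \text{Gamma}(2n-1,\lambda),
\]
with density $\lambda^{2n-1}x^{2n-2}e^{-\lambda x}/(2n-2)!$.

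\textbf{Series collapse.} Summing and substituting $k=n-1$, then applying the identity $C_k/(2k)! = 1/\bigl(k!(k+1)!\bigr)$, the mixture collapses to
\[
\ell(x) \;=\; \frac{\lambda\, e^{-\lambda x}}{2}\sum_{k\ge 0}\frac{1}{k!(k+1)!}\left(\frac{\lambda x}{2}\right)^{2k} \;=\; \frac{e^{-\lambda x}}{x}\, I_1(\lambda x),
\]
by the defining series $I_1(y) = \sum_{k\ge 0}(y/2)^{2k+1}/\bigl(k!(k+1)!\bigr)$.

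\textbf{Main obstacle and alternative route.} The only delicate point is the edge count: it is $2n-1$ (not $2n-2$) under the planted convention of Definition~\ref{def:binary}, and that off-by-one is precisely what aligns the mixture with $I_1$ rather than $I_0$. An equivalent approach that avoids the combinatorial bookkeeping is via Laplace transforms: the branching decomposition $\textsc{length}(T)\stackrel{d}{=}\text{Exp}(\lambda) + {\bf 1}_{\rm split}(L'+L'')$, with $L',L''$ i.i.d.\ copies of $\textsc{length}(T)$ and split probability $1/2$, yields the quadratic $\lambda\phi^2 - 2(\lambda+s)\phi + \lambda = 0$ for $\phi(s) = {\sf E}[e^{-s\textsc{length}(T)}]$, whose decaying root $\phi(s) = \bigl(\lambda+s - \sqrt{s(s+2\lambda)}\bigr)/\lambda$ coincides with the classical Laplace transform of $e^{-\lambda x}I_1(\lambda x)/x$.
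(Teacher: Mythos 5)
Your proof is correct and follows essentially the same route as the paper's: condition on the number of leaves via the Catalan count ${\sf P}(N=n)=C_{n-1}2^{-(2n-1)}$, identify the conditional length as a Gamma$(2n-1,\lambda)$ variable, and collapse the resulting mixture into the series for $I_1$ (the paper indexes by $n+1$ leaves and $2n+1$ edges, which is the same bookkeeping). The Laplace-transform recursion you sketch as an alternative also appears in the paper, immediately after the lemma, as equations \eqref{recursionEll}--\eqref{recursionLaplace}.
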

\begin{proof}
The number of different combinatorial shapes of a planar
binary tree with $n+1$ leaves, and therefore $2n+1$ edges, is given by 
the {\it Catalan number} $C_n={1 \over n+1}\binom{2n}{n}={(2n)! \over (n+1)! n! }$.  
The total length of $2n+1$ edges is a gamma random variable with parameters 
$\lambda$ and $2n+1$ and density function
\[\gamma_{\lambda,2n+1}(x)={\lambda^{2n+1} x^{2n} e^{-\lambda x} \over \Gamma(2n+1)}, \quad x>0.\]
Hence, the total length of the tree $T$ has the probability density function
\begin{align}\label{eq:ell}
\ell(x) &= \sum\limits_{n=0}^\infty {C_n \over 2^{2n+1}} \cdot {\lambda^{2n+1} x^{2n} e^{-\lambda x} \over (2n)!} 
= \sum\limits_{n=0}^\infty {\lambda^{2n+1} x^{2n} e^{-\lambda x} \over 2^{2n+1} (n+1)! n!}  \nonumber \\
&=  {1 \over x}  e^{-\lambda x} \sum\limits_{n=0}^\infty {\left({\lambda x \over 2}\right)^{2n+1} \over \Gamma(n+2)\, n!} = {1 \over x}  e^{-\lambda x}  I_1 \big(\lambda x\big).
\end{align}
\end{proof}

\medskip
\noindent
Next, we compute the Laplace transform of $\ell(x)$. By the summation formula in (\ref{eq:ell}),
\begin{align*}
\mathcal{L}\ell(s) &= \int\limits_0^\infty  \sum\limits_{n=0}^\infty {C_n \over 2^{2n+1}} \cdot {\lambda^{2n+1} x^{2n} e^{-(\lambda +s) x} \over (2n)!}  \,dx\\
&=\sum\limits_{n=0}^\infty {C_n \over 2^{2n+1}} \cdot \left({\lambda \over \lambda +s}\right)^{2n+1}\int\limits_0^\infty {(\lambda +s)^{2n+1} x^{2n} e^{-(\lambda +s) x} \over (2n)!}  \,dx\\
&=\sum\limits_{n=0}^\infty {C_n \over 2^{2n+1}}\cdot \left({\lambda \over \lambda +s}\right)^{2n+1} =Z \cdot c(Z^2),
\end{align*}
where we let $Z={\lambda \over 2(\lambda +s)}$, and the characteristic function of Catalan numbers $$c(z)=\sum\limits_{n=0}^\infty C_n z^n={2 \over 1+\sqrt{1-4z}}$$ is well known.
Therefore
\begin{equation}\label{eq:LaplaceL}
\mathcal{L}\ell(s)=Z \cdot c(Z^2)={\lambda \over \lambda + s+\sqrt{(\lambda +s)^2 -\lambda^2}}.
\end{equation}

Note that the Laplace transform $\mathcal{L}\ell(s)$ could be derived from the total probability formula
\begin{equation}\label{recursionEll}
\ell(x)={1 \over 2}\phi_\lambda (x)+{1 \over 2}\phi_\lambda \ast \ell \ast \ell(x),
\end{equation}
where $\phi_\lambda (x)$ is the exponential p.d.f. (\ref{exp}). Thus, $\mathcal{L}\ell(s)$ solves
\begin{equation}\label{recursionLaplace}
\mathcal{L}\ell(s)={1 \over 2} {\lambda \over \lambda +s}\Big(1+\big(\mathcal{L}\ell(s)\big)^2\Big).
\end{equation}

\begin{cor}
The probability density function $f(x)$ of the length of an excursion
in an exponential symmetric random walk with parameter $\lambda$
is given by
\begin{equation}
f(x)={1 \over 2} \ell(x/2).
\end{equation}
\end{cor}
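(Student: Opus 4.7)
The plan is to reduce the computation of $f(x)$ to the tree-length density $\ell(x)$ from Lemma~\ref{lem:ell} by exploiting the bijection between excursions of the symmetric exponential random walk and Harris paths of exponential critical binary Galton--Watson trees, followed by a single change of variables.

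First I would invoke the preceding corollary (equivalently, Theorem~\ref{Pit7_3} combined with the Harris-path/level-set reciprocity of Lemma~\ref{lem:recip}) to identify an excursion $X_t$ of the symmetric exponential random walk with parameter $\lambda$ with the Harris path $H_T$ of a tree $T \stackrel{d}{=} {\sf GW}(\lambda)$. Next, I would recall that by construction $H_T$ performs a depth-first traversal of $T$ in which every edge is crossed exactly twice, once on the way down and once on the way back up. Consequently, the domain of $H_T$ — which is precisely the length of the excursion — has total length $2\cdot\textsc{length}(T)$.

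Denoting the length of the excursion by $L$, the two observations above yield $L \stackrel{d}{=} 2\,\textsc{length}(T)$. Since $\textsc{length}(T)$ has density $\ell(x)$ by Lemma~\ref{lem:ell}, a one-line change of variables produces
\[
f(x) \;=\; \frac{d}{dx}\,{\sf P}\!\left(\textsc{length}(T)\le x/2\right) \;=\; \tfrac{1}{2}\,\ell(x/2),
\]
as claimed. The argument is entirely structural and presents no genuine obstacle; the only point requiring some care is tracking the parameter convention between the discrete walk $\{X_k\}$ with Laplace jumps of density $\tfrac{\lambda}{2}e^{-\lambda|x|}$ and its piecewise linear, slope-$\pm 1$ interpolation, so as to confirm that the Galton--Watson tree associated to the excursion is indeed ${\sf GW}(\lambda)$ and not a rescaling thereof.
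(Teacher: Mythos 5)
Your proposal is correct and is essentially the paper's own proof, which consists of the single observation that the excursion has twice the length of a tree ${\sf GW}(\lambda)$. You merely make explicit the identification of the excursion with a Harris path, the factor of two from the depth-first traversal, and the resulting change of variables.
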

\begin{proof}
Observe that the excursion has twice the length of a tree ${\sf GW}(\lambda)$.
\end{proof}
\bigskip

\subsection{Prune invariance of ${\sf GW}(\lambda)$}%%%%%%%%%%%%%%%%%%%%%%%%%%%%%%%%%%%%%%%%%%%%%
\label{sec:PI}
This section establishes prune invariance of exponential Galton-Watson trees 
with respect to arbitrary generalized pruning.

\begin{thm} \label{main}
Let $T\stackrel{d}{=}{\sf GW}(\lambda)$ be an exponential critical binary Galton-Watson tree 
with parameter $\lambda>0$.
Then, for any monotone non-decreasing function $\varphi:\L\rightarrow\mathbb{R}^+$ and
any $\Delta>0$ we have
\[T^\Delta:=\{\cS_\Delta(\varphi,T)|\cS_\Delta(\varphi,T) \not= \phi\} \stackrel{d}{=} 
{\sf GW}(\lambda p_{\Delta}(\lambda,\varphi)),\]
where $p_{\Delta}(\lambda,\varphi)={\sf P}(\cS_\Delta(\varphi,T) \not= \phi)$. 
That is, the pruned tree $T^\Delta$ conditioned on surviving is an exponential 
critical binary Galton-Watson tree with parameter
$$\mathcal{E}_\Delta(\lambda,\varphi)=\lambda p_{\Delta}(\lambda,\varphi).$$
\end{thm}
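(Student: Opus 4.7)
The plan is to verify that $T^\Delta := \cS_\Delta(\varphi,T)\mid \cS_\Delta(\varphi,T)\neq\phi$ obeys the defining recursion of ${\sf GW}(\lambda p)$: namely (i) its stem length is exponential with rate $\lambda p$, (ii) its progenitor is a leaf with probability $1/2$ and branches with probability $1/2$, and (iii) conditional on branching, the two subtrees hanging off the progenitor are independent copies of $T^\Delta\mid T^\Delta\neq\phi$. Two structural facts drive the argument: the recursive decomposition of ${\sf GW}(\lambda)$ into a stem of length $L_0\sim\mathrm{Exp}(\lambda)$, a progenitor $v_0$, and either no further branches (probability $1/2$) or two independent subtrees $\bar T_1,\bar T_2$ (each distributed as ${\sf GW}(\lambda)$) hanging off $v_0$; and the locality of pruning, $\Delta_{x,T}=\Delta_{x,\bar T_i}$ for $x\in \bar T_i$, which implies that $\bar T_1^\Delta$ and $\bar T_2^\Delta$ are independent.

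The core of the proof is a case decomposition at $v_0$: (A) $v_0$ is a leaf of $T$; (B1) $v_0$ branches and both $\bar T_i^\Delta\neq\phi$; (B2) $v_0$ branches and exactly one $\bar T_i^\Delta\neq\phi$; (B3) $v_0$ branches and neither $\bar T_i^\Delta\neq\phi$. In each non-pass-through case (A), (B1), (B3), I would show that the stem of $T^\Delta$, conditioned on being in that case and on survival, is exponential with rate $\lambda$. Case (B1) is immediate since the stem of $T^\Delta$ equals the stem of $T$, of length $L_0$. In cases (A) and (B3), the stem of $T^\Delta$ is truncated at the unique point on the stem of $T$ where $\varphi(\Delta_{x,T})=\Delta$; monotonicity of $\varphi$ with respect to $\preceq$ ensures that $\varphi(\Delta_{x,T})$ depends on $x$ along the stem only through $L_0-d(\rho,x)$, so the cut occurs at distance $L_0-s^*$ from $\rho$, where $s^*$ is a random offset determined by the part of $T$ below $v_0$ and therefore independent of $L_0$. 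The memoryless property of $L_0$ then yields $L_0-s^*\mid L_0>s^*\sim\mathrm{Exp}(\lambda)$.

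Case (B2) is a pass-through: $v_0$ is kept but has degree $2$ in the kept region, so it is absorbed by series reduction, and the stem of $T^\Delta$ extends from $\rho$ through $v_0$ into the unique surviving subtree, whose own stem is exponential with rate $\lambda p$ by self-similarity. Aggregating the four cases, the stem of $T^\Delta$ conditioned on survival is a geometric sum of i.i.d.\ $\mathrm{Exp}(\lambda)$ segments, with stopping probability
\[
q_{\mathrm{stop}}=\frac{\mathbb{P}(\text{non-(B2)}\,\cap\,\text{survival})}{\mathbb{P}(\text{survival})}=\frac{p-p(1-p)}{p}=p,
\]
using that (B2) has probability $p(1-p)$ and always implies survival. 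The classical fact that a geometric sum of i.i.d.\ $\mathrm{Exp}(\lambda)$ variables with geometric parameter $q$ is itself $\mathrm{Exp}(\lambda q)$ then gives $L^\Delta\mid T^\Delta\neq\phi\sim\mathrm{Exp}(\lambda p)$. The progenitor of $T^\Delta$ is a branching vertex precisely in case (B1), whose conditional probability given survival and stopping equals $(p^2/2)/p^2=1/2$, matching the critical binary offspring law. Independence of $\bar T_1^\Delta,\bar T_2^\Delta$ in (B1) together with self-similarity then closes the recursion and identifies $T^\Delta\mid T^\Delta\neq\phi$ with ${\sf GW}(\lambda p)$.

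The principal difficulty is case (B3), where $v_0$ can survive as a leaf of $T^\Delta$ even though both $\varphi(\bar T_i)<\Delta$---for instance, when $\varphi=\textsc{length}$, one has $\varphi(\Delta_{v_0,T})=\textsc{length}(\bar T_1)+\textsc{length}(\bar T_2)$, which can exceed $\Delta$ while each $\varphi(\bar T_i)$ does not. The crucial observation is that the cut offset $s^*$ in (B3) is a measurable function of the structure of $T$ below $v_0$, which is independent of $L_0$, so the memoryless argument still applies; monotonicity of $\varphi$ with respect to $\preceq$ is indispensable to define the cut unambiguously. A secondary technicality is closing the recursion in the self-similarity step, which can be handled either by induction on the number of internal vertices of $T$ (almost surely finite in the critical regime), or by observing that the recursive distributional equation derived above admits ${\sf GW}(\lambda p)$ as its unique solution.
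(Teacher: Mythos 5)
Your proposal is correct and follows essentially the same route as the paper: both decompose $T$ at the first vertex $v_0$ below the root, classify by whether $v_0$ is a leaf or branches and by how many of the two descendant subtrees survive pruning, identify the ``exactly one survives'' case as a pass-through that extends the stem, and compute the branching probability $q=1/2$ of the pruned progenitor from the same ratio $\tfrac{1}{2}p_\Delta^2\big/p_\Delta^2$. The only real difference is the last step for the stem length: the paper writes the resulting renewal identity as an integral equation $f=p_\Delta\phi_\lambda+(1-p_\Delta)\,\phi_\lambda*f$ for the density of $Y$ and solves it by characteristic functions, whereas you recognize the same structure directly as a geometric$(p_\Delta)$ sum of i.i.d.\ $\mathrm{Exp}(\lambda)$ segments (justified by memorylessness and the independence of the cut offset $s^*$ from $L_0$) and invoke the standard geometric-thinning fact; this is a purely cosmetic, if somewhat more probabilistic, variant of the same argument.
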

\begin{proof}
Let $X$ denote the length of the edge of $T$ adjacent to the root of $T$, and let $Y$ denote the length of the edge of $T^\Delta$ adjacent to the root of $T^\Delta$. 
Let $x$ be the  descendent vertex (a junction or a leaf) to the root in $T$. 
Then $X$, which is an exponential random variable with parameter $\lambda$, represents the distance from the root of $T$ to $x$. 
Let ${\sf deg}_T(x)$ denote the degree of $x$ in tree $T$ and ${\sf deg}_{T^\Delta}(x)$ denote the degree of $x$ in tree $T^\Delta$.
If $T^\Delta=\phi$, then $Y=0$. 
Let $$F(h)={\sf P}(Y \leq h~|~ \cS_\Delta(\varphi,T) \not= \phi).$$  
The event $\{Y \leq h\}$ is partitioned into the following non-overlapping sub-events S$_1,\dots$ S$_4$ illustrated in Fig.~\ref{fig:thm2}:

\begin{figure}[hbt]
	\centering
	\includegraphics[width=0.9\textwidth]{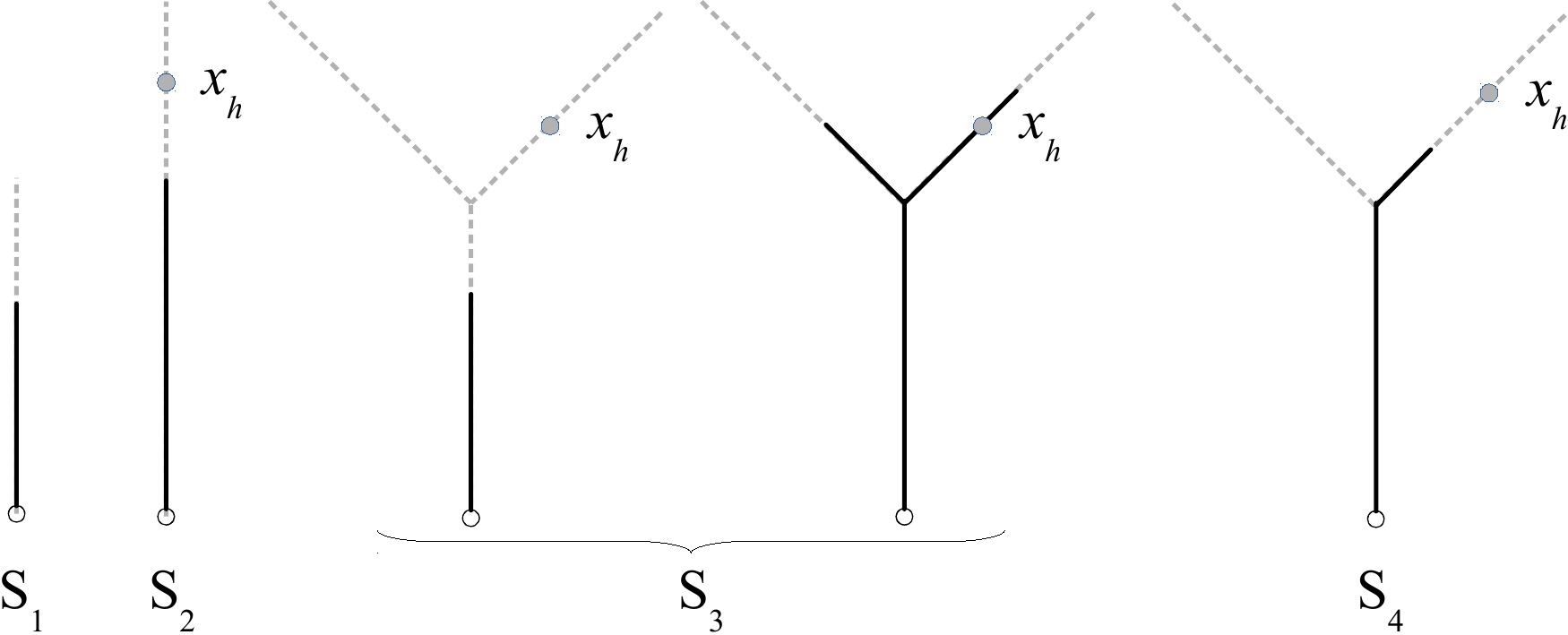}
	\caption{Sub-events used in the proof of Theorem~\ref{main}.
	Gray dashed line shows (a part of) initial tree $T$.
	Solid black line shows (a part of) pruned tree $T^{\Delta}$. 
	We denote by $x_h$ a point in $T$ located at distance $h$ from the root, if it exists.}
	\label{fig:thm2}
\end{figure}
\begin{itemize}
  \item[(S$_1$)] The event 
  $\{{\sf deg}_T(x)=1\text{ and }X \leq h\}$ has probability
  $${1 \over 2}(1-e^{-\lambda h}).$$
  
  \item[(S$_2$)] The event 
  $\{X>h$ and all points of $T$ descendant to $x_h$ do not belong to $T^\Delta\}$ 
  has probability
  $$e^{-\lambda h}(1-p_\Delta).$$
  
  \item[(S$_3$)] The event $\{X \leq h$ and ${\sf deg}_T(x)=3$ and either both subtrees of $T$ descending from $x$ are pruned away
  completely (not intersecting $T^\Delta$) or $\{x \in T^\Delta,~{\sf deg}_{T^\Delta}(x)=3\}\}$
   has probability
  $${1 \over 2}(1-e^{-\lambda h})\big((1-p_\Delta)^2+p_\Delta^2 \big).$$
  
  \item[(S$_4$)] The event $\{\{X \leq h$, ${\sf deg}_T(x)=3$\} and 
  $\{x \in T^\Delta, ~{\sf deg}_{T^\Delta}(x)=2\}$ and $Y \leq h\}$  has probability\footnote{Here, ${\sf deg}_{T^\Delta}(x)=2$ means $x$ is neither a junction nor a leaf in $T^\Delta$.}
  $${1 \over 2}\int\limits_0^h \lambda e^{-\lambda t} \cdot 2p_\Delta(1-p_\Delta)\cdot F(h-t)\, dt=p_\Delta(1-p_\Delta) \int\limits_0^\infty \lambda e^{-\lambda t} F(h-t)\, dt.$$
\end{itemize}
Using this we have two representations for the probability ${\sf P}(Y\le h)$:
\begin{align*}
{\sf P}(Y\le h) =& (1-p_\Delta)+p_\Delta F(h)  \\
=& {1 \over 2}(1-e^{-\lambda h})+e^{-\lambda h}(1-p_\Delta)\\
&+{1 \over 2}(1-e^{-\lambda h})\big((1-p_\Delta)^2+p_\Delta^2 \big)\\
&+p_\Delta(1-p_\Delta) \int\limits_0^\infty \lambda e^{-\lambda t} F(h-t)\, dt,
\end{align*}
which simplifies to
$$(1-p_\Delta)+p_\Delta F(h) =(1-p_\Delta+p^2_\Delta )-e^{-\lambda h} p_\Delta +p_\Delta(1-p_\Delta) \int\limits_0^\infty \lambda e^{-\lambda t} F(h-t)\, dt.$$
Differentiating the above equality we obtain the following equation for the probability density function $f(y)={d\over dy}F(y)$ of $Y$:  
$$f(h)=p_\Delta \phi_\lambda (h) +(1-p_\Delta) \phi_\lambda \ast f(h),$$
where as before $\phi_\lambda$ denotes the exponential density with parameter $\lambda$.
Applying integral transformation on both sides of the equation, we obtain the characteristic function $\hat{f}(s)=E[e^{isY}]$ of $Y$,
$$\hat{f}(s)={\lambda p_\Delta \over \lambda p_\Delta -is}=\hat{\phi}_{\lambda p_\Delta}(s).$$
Thus, we conclude that $Y$ is an exponential random variable with parameter $\lambda p_\Delta$.

\bigskip
Next, let $y$ be the  descendent vertex (a junction or a leaf) to the root in $T^\Delta$. 
If $T^\Delta = \phi$, let $y$ denote the root.
Let $$q={\sf P}({\sf deg}_{T^\Delta}(y)=3 ~|~ S_\Delta(T) \not= \phi).$$
Then,
\begin{align*}
p_\Delta q=& {\sf P}({\sf deg}_{T^\Delta}(y)=3)\\
=& {\sf P}({\sf deg}_T(x)=3) \cdot\Big\{{\sf P}\big({\sf deg}_{T^\Delta}(x)=3~|~{\sf deg}_T(x)=3\big)\\
&+{\sf P}\big({\sf deg}_{T^\Delta}(x)=2~|~{\sf deg}_T(x)=3\big)\cdot q \Big\}\\
=& {1 \over 2}\Big\{p^2_\Delta + 2p_\Delta(1-p_\Delta) q  \Big\}
\end{align*}
implying
$$q={1 \over 2} p_\Delta+(1-p_\Delta) q,$$
which in turn yields $q={1 \over 2}$.

\bigskip
We saw that conditioning on $\cS_\Delta(\varphi,T) \not= \phi$, the pruned tree $T^\Delta$ has the edge connecting the root to its descendent  vertex $y$ distributed exponentially 
with parameter $\lambda p_\Delta$. 
Then, with probability $q={1 \over 2}$, the pruned tree $T^\Delta$ branches at $y$ into two independent subtrees, each distributed as
$\{T^\Delta ~|~T^\Delta\ne \phi\}$. 
Thus, we recursively obtain that $T^\Delta$ is a critical binary Galton-Watson tree with i.i.d. exponential edge length with parameter $\lambda p_\Delta$.
\end{proof}

Next, we find an exact form of the survival probability $p_\Delta(\lambda,\varphi)$ for three particular choices of $\varphi$, thus obtaining
$\mathcal{E}_\Delta(\lambda,\varphi)$.

\begin{thm} \label{pdelta}
In the settings of Theorem \ref{main}, we have 
\begin{description}
  \item[(a)] If $\varphi(T)$ equals the total length of $T$ $(\varphi = \textsc{length}(T))$, 
  then
  $$\mathcal{E}_\Delta(\lambda,\varphi)=\lambda e^{-\lambda \Delta}\Big[ I_0(\lambda \Delta)+ I_1(\lambda \Delta) \Big].$$
  
  \item[(b)] If $\varphi(T)$ equals the height of $T$ $(\varphi = \textsc{height}(T))$, then
  $$\mathcal{E}_\Delta(\lambda,\varphi)={2\lambda  \over \lambda \Delta +2}.$$
  
  \item[(c)] If $\varphi(T)+1$ equals the Horton-Strahler order of the tree $T$, then 
  $$\mathcal{E}_\Delta(\lambda,\varphi)=\lambda 2^{-\lfloor \Delta \rfloor},$$
  where $\lfloor \Delta \rfloor$ denotes the maximal integer  $\le \Delta$.
\end{description}
\end{thm}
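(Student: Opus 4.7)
\textbf{Proof plan for Theorem~\ref{pdelta}.}
The plan rests on first reducing the survival event to $\{\varphi(T)\ge\Delta\}$, which works uniformly across all three parts. Indeed, monotonicity of $\varphi$ with respect to $\preceq$ gives $\varphi(\Delta_{x,T})\le\varphi(T)$ for every $x\in T$, so $\varphi(T)<\Delta$ already forces $\cS_\Delta(\varphi,T)=\phi$. Conversely, picking $x$ on the stem arbitrarily close to the root produces a subtree $\Delta_{x,T}$ with $\varphi(\Delta_{x,T})=\varphi(T)$ in each of the three cases (for length and height by continuity as $x\to\rho$, and for the Horton--Strahler order because ${\sf k}$ is invariant under shortening the stem). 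Hence $p_\Delta(\lambda,\varphi)={\sf P}(\varphi(T)\ge\Delta)$ in every case, and each part becomes an explicit computation of the distribution of $\varphi(T)$.

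For (a), I would integrate the density $\ell(x)=\tfrac{1}{x}e^{-\lambda x}I_1(\lambda x)$ of Lemma~\ref{lem:ell}, guided by the Laplace transform computed in \eqref{eq:LaplaceL}. The natural ansatz $F(\Delta)=e^{-\lambda\Delta}\bigl[I_0(\lambda\Delta)+I_1(\lambda\Delta)\bigr]$ can be verified by differentiation: applying the Bessel identities $I_0'=I_1$ and $I_1'(z)=I_0(z)-I_1(z)/z$ makes the $I_0$ and $I_1$ contributions cancel in pairs, leaving $F'(\Delta)=-\ell(\Delta)$. Combined with $F(0)=1$ and $F(\infty)=0$ (from the asymptotics $I_n(z)\sim e^z/\sqrt{2\pi z}$), this identifies $p_\Delta$ with $F(\Delta)$, and multiplication by $\lambda$ gives the claimed form of $\mathcal{E}_\Delta$.

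For (b), I would condition on the stem length $X\sim\mathrm{Exp}(\lambda)$ and the nature of its endpoint vertex, which is a leaf or a binary branching point with equal probability $1/2$. Using that the two subtrees at a branching are i.i.d.\ copies of ${\sf GW}(\lambda)$ yields the renewal-type equation
\[p(\Delta)=e^{-\lambda\Delta}+\tfrac12\int_0^\Delta \lambda e^{-\lambda x}\bigl(2p(\Delta-x)-p(\Delta-x)^2\bigr)\,dx,\]
since survival past height $\Delta$ after consuming stem length $x$ requires at least one of the two subtrees to survive past height $\Delta-x$. Multiplying by $e^{\lambda\Delta}$ and differentiating collapses the equation to the Riccati ODE $p'(\Delta)=-\tfrac{\lambda}{2}p(\Delta)^2$ with $p(0)=1$, whose unique solution is $p(\Delta)=2/(\lambda\Delta+2)$.

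For (c), the identification $\cS_t=\cR^{\lfloor t\rfloor}$ from Example~\ref{ex:H} reduces survival to $\{{\sf k}(T)>\lfloor\Delta\rfloor\}$, a purely combinatorial event depending only on $\textsc{shape}(T)$. I would derive $q_n:={\sf P}({\sf k}(T)\ge n)$ by conditioning at the descendent vertex of the root (leaf or binary branching, each with probability $1/2$) and applying the combining rule ${\sf k}(T)=\max({\sf k}_1,{\sf k}_2)+\mathbf{1}_{\{{\sf k}_1={\sf k}_2\}}$. Inclusion--exclusion on the events ``some subtree has order $\ge n$'' and ``both subtrees have order $\ge n-1$'' collapses to the recursion $q_n=\tfrac12 q_{n-1}$ with $q_1=1$, giving $q_n=2^{1-n}$ and hence $p_\Delta=2^{-\lfloor\Delta\rfloor}$. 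The main obstacle is the Bessel-function bookkeeping in part (a), where one must guess the correct closed-form antiderivative (inverting \eqref{eq:LaplaceL}) before it can be verified by differentiation; parts (b) and (c) are routine self-similar recursions once the reduction $p_\Delta={\sf P}(\varphi(T)\ge\Delta)$ is in place.
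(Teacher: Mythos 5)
Your proposal is correct, and the preliminary reduction $p_\Delta={\sf P}(\varphi(T)\ge\Delta)$ --- which the paper uses only implicitly --- is worth making explicit as you do: monotonicity gives $\varphi(\Delta_{x,T})\le\varphi(T)$, while taking $x$ on the stem near the root gives $\varphi(\Delta_{x,T})\to\varphi(T)$ (with equality in the Horton--Strahler case), and the boundary event $\{\varphi(T)=\Delta\}$ is null for length and height. For part (a) you follow essentially the paper's route: both reduce to $1-\int_0^\Delta\ell(x)\,dx$ with $\ell$ from Lemma~\ref{lem:ell}; the paper closes the integral by citing formula 11.3.14 of \cite{AS1964}, whereas you verify the antiderivative directly from $I_0'=I_1$ and $I_1'(z)=I_0(z)-I_1(z)/z$, which is a harmless substitution. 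Part (b) is where you genuinely diverge: the paper passes to the Harris-path representation, views the height as the maximum of an excursion of a symmetric exponential random walk, and applies the Optional Stopping Theorem (using the memoryless overshoot at the two barriers); you instead condition on the stem length and the type of its endpoint to obtain the renewal equation $p(\Delta)=e^{-\lambda\Delta}+\tfrac12\int_0^\Delta\lambda e^{-\lambda x}\bigl(2p(\Delta-x)-p(\Delta-x)^2\bigr)\,dx$, which collapses to the Riccati equation $p'=-\tfrac{\lambda}{2}p^2$ with $p(0)=1$. Your route is more elementary and self-contained (no Harris-path correspondence or martingale machinery is needed), it agrees with the classical extinction-probability computation for the associated critical birth--death process, and it adapts more readily to other offspring or edge-length laws; the paper's route is shorter once Theorem~\ref{Pit7_3} is in hand. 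For part (c) the paper simply cites \cite[Corollary 1]{KZ18}, while you rederive $q_n={\sf P}({\sf k}(T)\ge n)=2^{1-n}$ from the combining rule ${\sf k}(T)=\max({\sf k}_1,{\sf k}_2)+\mathbf{1}_{\{{\sf k}_1={\sf k}_2\}}$; the inclusion--exclusion does collapse to $q_n=\tfrac12 q_{n-1}$ as you claim, so this is a correct self-contained proof of the cited fact. All three final expressions check out.
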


\begin{proof}
{\bf Part (a).} By Lemma \ref{lem:ell}, 
\begin{align}\label{eq:pDelta1}
p_\Delta =& 1-\int\limits_0^{\Delta} \ell(x) \,dx =1-\int\limits_0^{\lambda \Delta}{1 \over x}  e^{-x}  I_1 \big(x\big) \,dx \nonumber \\
&= e^{-\lambda \Delta}\Big[ I_0(\lambda \Delta)+ I_1(\lambda \Delta) \Big],
\end{align}
where for the last equality we used formula 11.3.14 in \cite{AS1964}.

\bigskip
\noindent
{\bf Part (b).} 
Using the representation of an exponential critical binary Galton-Watson tree 
${\sf GW}(\lambda)$ via its Harris path as in Theorem~\ref{Pit7_3}, we consider 
a symmetric exponential random walk $Y_k$ with jump distribution given by 
\eqref{laplace}, and notice that $Y_k$ is a martingale.

The probability $p_\Delta$ now can be interpreted for the Harris path as the probability that a given excursion of $Y_k$ is above $\Delta$ for some $k$.  
This would guarantee that the depth of a tree exceeds $\Delta$. 
Specifically, let $T_0=\min\{k>1 ~:~ Y_k \leq 0\}$. 
We condition on $Y_1>0$, and consider an excursion  
$Y_0,Y_1,\hdots, Y_{T_0}$. 
Here
$$p_\Delta={\sf P}\left(\max\limits_{j:~ 0< j <T_0} Y_j >\Delta ~\Big|~Y_1>0 \right).$$

\medskip
\noindent
The problem of finding $p_\Delta$ is solved using the Optional Stopping Theorem. Let 
$$T_\Delta=\min\{k >0~:~Y_k \geq \Delta\} \qquad \text{ and } \qquad T:=T_\Delta \wedge T_0.$$
Next, observe that
$$p_\Delta={\sf P}(T=T_\Delta ~|~Y_1>0).$$
For a fixed $y \in (0,\Delta)$, by the Optional Stopping Theorem, we have
\begin{eqnarray*}
y & = & E[Y_T ~|~Y_1=y]\\
& = & E[Y_T ~|~T=T_0, Y_1=y] {\sf P}(T=T_0 ~|~ Y_1=y)\\
& & \qquad +E[Y_T ~|~T=T_\Delta, Y_1=y] {\sf P}(T=T_\Delta ~|~ Y_1=y)\\
& = & E[Y_T ~|~Y_T \leq 0, Y_1=y] {\sf P}(T=T_0 ~|~ Y_1=y)\\
& & \qquad +E[Y_T ~|~Y_T \geq \Delta, Y_1=y] {\sf P}(T=T_\Delta ~|~ Y_1=y)\\
& = & -{1 \over \lambda}{\sf P}(T=T_0 ~|~ Y_1=y)+
\left(\Delta+{1 \over \lambda}\right){\sf P}(T=T_\Delta ~|~ Y_1=y)\\
& = & \left(\Delta+{2 \over \lambda}\right){\sf P}(T=T_\Delta ~|~ Y_1=y)-{1 \over \lambda}.\\
\end{eqnarray*}

\noindent
Hence, $${\sf P}(T=T_\Delta ~|~ Y_1=y)={y+{1 \over \lambda} \over \Delta +{2 \over \lambda}}.$$
Thus,
\begin{eqnarray*}
{\sf P}\Big(T=T_\Delta,  0<Y_1<\Delta ~|~Y_1>0\Big) & = &\int\limits_0^\Delta {\sf P}(T=T_\Delta ~|~ Y_1=y)~\lambda e^{-\lambda y} dy\\
& = & \int\limits_0^\Delta {y+{1 \over \lambda} \over \Delta +{2 \over \lambda}}~\lambda e^{-\lambda y} dy\\
& = & {2 \over \lambda \Delta +2}-e^{-\lambda \Delta},
\end{eqnarray*}

\noindent
and therefore,
\begin{eqnarray*}
p_\Delta & = & {\sf P}\left(\max\limits_{j:~ 0< j <K } Y_j >\Delta ~|~Y_1>0 \right)\\
& = & {\sf P}\Big(T=T_\Delta,  0<Y_1<\Delta ~|~Y_1>0\Big)+{\sf P}\Big(T=T_\Delta,  Y_1 \geq \Delta ~|~Y_1>0\Big)\\
& = & {2 \over \lambda \Delta +2}-e^{-\lambda \Delta}+{\sf P}\Big(Y_1 \geq \Delta ~|~Y_1>0\Big)\\
& = & {2 \over \lambda \Delta +2}.
\end{eqnarray*}

\bigskip
\noindent
{\bf Part (c).} 
Follows from \cite[Corollary 1]{KZ18}. 
\end{proof}

\begin{rem}
Let ${\mathcal E}_\Delta (\lambda,\varphi)={2\lambda \over \lambda \Delta +2}$ as in Theorem \ref{pdelta}(b).  
Here $~{\mathcal E}_0\lambda=\lambda$
and ${\mathcal E}_\Delta (\lambda,\varphi)$ is a linear-fractional transformation 
associated with matrix
$${\mathcal A}_\Delta=\begin{pmatrix}
     1 & 0   \\
     {\Delta \over 2}  &  1
\end{pmatrix}.$$
Since ${\mathcal A}_\Delta$ form a subgroup in $SL_2(\mathbb{R})$, the transformations $\left\{{\mathcal E}_\Delta \right\}_{\Delta \geq 0}$ satisfy the semigroup property
$${\mathcal E}_{\Delta_1} {\mathcal E}_{\Delta_2} ={\mathcal E}_{\Delta_1+\Delta_2}$$
for any pair $\Delta_1, \Delta_2 \geq 0$.

We notice also that the operator ${\mathcal E}_\Delta (\lambda,\varphi)$ in part (c) of Theorem~\ref{pdelta} satisfies only the discrete semigroup property for nonnegative integer times. 
Finally, one can check that ${\mathcal E}_\Delta (\lambda,\varphi)$ in part (a) does not satisfy the semigroup property.
\end{rem}

%%%%%%%%%%%%%%%%%%%%%%%%%%%%%%%%%%%%%%%%%%%%%%%%%%%%%%%%%%%%%%%%%%%
\section{Continuum ballistic annihilation model, $A+A \rightarrow \zeroslash$}
\label{sec:annihilation}
Here we turn to the dynamics of particles governed by 1-D ballistic annihilation model.
Section~\ref{sec:shock} introduces the model and describes the natural emergence of sinks (shocks).
The model initial conditions are given by a particle velocity distribution and particle density 
on $\mathbb{R}$.
We consider here a constant density and initial velocity distribution 
with alternating values $\pm1$, or, equivalently,
initial piece-wise linear potential $\psi(x,0)$ with alternating slopes $\pm1$ 
(Fig.~\ref{fig:vel}).
Section~\ref{sec:basics} summarizes the basic constraints on the model dynamics.
Section~\ref{sec:solution} provides a construction of the
shock tree (Lemma~\ref{lem:SWT}) and its graphical 
embedding into the phase space $(x,\psi(x,t))$ and space-time domain $(x,t)$.
Theorem~\ref{thm:SWT} states that the shock tree is the level set tree
for the initial potential $\psi(x,0)$.
Theorem~\ref{thm:pruning} in Section~\ref{sec:Bdyn} establishes equivalence of 
the ballistic annihilation dynamics and a generalized dynamical pruning of 
a mass-equipped shock tree. 

\subsection{Continuum model, sinks, and shock trees}
\label{sec:shock}
The ballistic annihilation model describes the dynamics of a set of 
particles that move with given initial velocities and annihilate at collision, 
hence the model notation $A+A \rightarrow \zeroslash$  
\cite{EF85, BNRL93,Belitsky1995,Piasecki95,Droz95,BNRK96,Ermakov1998,Blythe2000,KRBN2010,Sidoravicius2017}.
The model dynamics is illustrated in Fig.~\ref{fig:bam}.
We introduce here a continuum mechanics formulation of ballistic annihilation, analyzing the dynamics of a  mass distributed on the real line. 
This formulation makes sense when a large number of particles densely populates the system.

Formally, we are given a Lebesgue measurable initial density $g(x) \geq 0$ of 
particles on a line $\mathbb{R}$. 
The initial particle velocities are given by $v(x,0)=v(x)$.
Prior to collision and subsequent annihilation, a particle located at $x_0$ at time $t=0$ 
moves according to its initial velocity, so its coordinate $x(t)$ changes as
\begin{equation}
\label{eq:tangency_point}
x(t)=x_0+tv(x_0).
\end{equation}
When the particle collides with another particle, it annihilates.
Accordingly, two particles with initial coordinates and velocities $(x_-,v_-)$ and $(x_+,v_+)$
collide and annihilate at time $t$ when they meet at the same new position, 
\[x_-+tv_-=x_++tv_+,\]
given that neither of the particles annihilated prior to $t$.
In this case the annihilation time is given by
\be
\label{t_sink}
t = -\frac{x_+-x_-}{v_+-v_-}.
\ee

Let $v(x,t)$ be the Eulerian specification of the velocity field at coordinate $x$
and time instant $t$; we define the corresponding 
{\it potential function} 
\[\psi(x,t)=-\int_a^x v(y,t)dy,\] 
so that $v(x,t)=-\partial_x \psi(x,t)$. 
Let $\psi(x,0)=\Psi_0(x)$ be the initial potential. 

We call a point $\sigma(t)$ {\it sink} (or {\it shock}), if there exist two particles that
annihilate at coordinate $s$ at time $t$.
Suppose $v(x) \in \mathcal{C}^1(\mathbb{R})$. 
The equation \eqref{t_sink} implies that appearance of a sink is associated with
a negative local minima of $v'(x^*)$; 
%such that there exists a neighborhood of $x^*$ contained in the support of $g(x)$; 
we call such points {\it sink sources}.
Specifically, if $x^*$ is a sink source, then a sink will appear at {\it breaking time} 
$t^*=-1/v'(x^*)$ with the location given by 
\[\sigma(t^*)=x^*+t^*v(x^*)=x^*-{v(x^*) \over v'(x^*)}\]
provided there exists a  punctured neighborhood 
$$N_\delta(x^*)=\{x:~0<|x-x^*|<\delta\} \subseteq \mathrm{supp}\{g(x)\}$$ 
such that none of the particles with the initial coordinates in $N_\delta(x^*)$ is annihilated before time $t^*$. 
%unless the particle with initial coordinate $x^*$ annihilated before time $t^*$. 

Sinks, which originate at sink sources, can move and coalesce (see Fig.~\ref{fig:bam}). 
We refer to a sink trajectory as a {\it shock wave}. 
We impose the conservation of mass condition by defining the {\it mass of a sink} 
at time $t$ to be the total mass of particles annihilated in the sink between 
time zero and time $t$. 
When sinks coalesce, their masses add up.
It will be convenient to assume that sinks do not disappear
when they stop accumulating mass.
Informally, we assume that the sinks are being pushed by the system particles.
Formally, there exists several cases depending on the occupancy of a
neighborhood of $\sigma(t)$.
If there exists an empty neighborhood around the sink coordinate $\sigma(t)$, the 
sink is considered 
at rest (its coordinate does not change). 
If only the left neighborhood of $\sigma(t)$ is empty, and the right adjacent velocity
is negative:
\[v(\sigma_+,t):=\lim_{x\downarrow \sigma(t)} v(x,t)<0,\]
the sink at $\sigma(t)$ moves with velocity $v(\sigma_+,t)$.
A similar rule is applied to the case of right empty neighborhood.
The appearance, motion, and subsequent coalescence of sinks can be described 
by a time oriented {\it shock tree}.
In particular, the coalescence of sinks under initial conditions with a finite 
number of sink sources is described by a finite tree.

The dynamics of ballistic annihilation, either in discrete or 
continuum versions, can be quite intricate and is lacking
a general description.
The existing analyses
focus on the evolution of selected statistics under particular initial conditions. 
In the next section, we outline some basic principles that can facilitate
the analysis of the continuum ballistic annihilation.
We then give a complete description of the dynamics in case
of two-valued initial velocity and constant particle density.
Specifically, we describe the evolution of a constant-slope 
potential $\psi(t,x)$ within a finite spatial domain for $t>0$.

\subsection{Basic constraints on ballistic annihilation dynamics}
\label{sec:basics}
Suppose $x_-<x_+$ are such that  $v(x_-)>v(x_+)$. 
Assume that density $g(x)$ is positive, and suppose there is only 
one sink source $x^* \in (x_-, x_+)$. 
In order for $x_-$ and $x_+$ to annihilate each other in the sink originated 
at $x^*$ at time $t>t^*$ we need the following:
\begin{itemize}
  \item[(i)] Collision at time $t$: $$x_- +tv(x_-)=x_+ +tv(x_+).$$
  \item[(ii)] The mass between $x_-$ and $x^*$  annihilates the mass between $x^*$ and $x_+$:
  $$\int\limits_{x_-}^{x^*} g(x) \, dx=\int\limits_{x^*}^{x_+} g(x) \, dx.$$
  \item[(iii)]  Neither $x_-$ nor $x_+$  is annihilated before time $t$.
\end{itemize} 
From conditions (i) and (ii), we obtain the location of the sink at time $t$:
\begin{equation}\label{eq:sinkspeed}
\sigma(t)={v(x_-){g(x_-) \over 1+tv'(x_-)}+v(x_+){g(x_+) \over 1+tv'(x_+)} 
\over {g(x_-) \over 1+tv'(x_-)}+{g(x_+) \over 1+tv'(x_+)}}.
\end{equation}

\medskip
\noindent
This sink dynamics description is not restricted to $v(x) \in \mathcal{C}^1(\mathbb{R})$, and can be extended to the case of piecewise smooth $v(x)$.
\begin{ex}[{\bf Two velocities, single sink}]
Suppose $g(x)$ is positive.
Let $v(x)=\begin{cases}
      v_- & \text{ if }x\le x^* \\
      v_+ & \text{ if }x>x^*
\end{cases}$, where the constants $v_- >v_+$ and $x^*$ are given. 
Naturally, $x^*$ is the only sink source and the only sink appears 
at the sink source at time $t=0$. 
Moreover, analogously to (\ref{eq:sinkspeed}), one can derive the dynamics of the sink at time $t$:
\begin{equation}\label{eq:sinkspeedEX}
\sigma(t)={v_- g(x_-) +v_+ g(x_+) \over g(x_-) +g(x_+)},
\end{equation}
where $x_- <x^*$ is the only root of 
\[G_t(y)=\int\limits_{x^*}^{y+t(v_- -v_+)} g(x) \, dx -\int\limits_{y}^{x^*} g(x) \, dx,\]
and $x_+=x_- +t(v_- -v_+)$.
Note that $G_t(x)$ is continuous and strictly increasing, and that $G_t(x^*)>0>G_t\big(x^*-t(v_- -v_+)\big)$.
\end{ex}

%%%%%%%%%%%%%%%%%%%%%%%%%%%%%%%%%%%%%%%%%%%%%%%%%%%%%%%%%%%%%%%%%%%%%%%%%%%%%%%
\subsection{Piece-wise linear potential with unit slopes}
\label{sec:solution}
The discrete 1-D ballistic annihilation model with two possible velocities $\pm v$ was considered in
\cite{EF85,Belitsky1995,BNRK96,Ermakov1998,Blythe2000}; 
the three velocity case ($-1$, $0$, and $+1$) appeared in \cite{Droz95,Sidoravicius2017}. 
We study a continuum version of the 1-D ballistic annihilation with two possible initial 
velocities and constant initial density, i.e.  $v(x)=\pm v$ and $g(x)\equiv g_0$.
Since we can scale both space and time, without loss of generality we let $v(x)=\pm 1$ and $g(x)\equiv 1$.
More formally, we consider an initial potential 
$\psi(x,0)=\Psi_0(x)$ that 
is a negative piece-wise linear excursion  on a 
finite interval with segment slopes $\pm 1$; see Fig.~\ref{fig:vel}.
We denote the space of such functions $\widetilde{\E}$, and
write $\widetilde{\E}([a,b])$ if the domain $[a,b]\subset\mathbb{R}$ is explicitly given.
This space bears a lot of symmetries that facilitate our analysis.

Observe that the domain $[a,b]$ is partitioned into non-overlapping subintervals with
boundaries $x_j$ such
that the initial particle velocity assumes alternating values of $\pm 1$ within
each interval, with boundary values $v(a,0)=1$ and $v(b,0)=-1$. 
For a finite interval $[a,b]$, there exists a finite time $t_{\rm max} = (b-a)/2$
at which all particles aggregate into a single sink of mass
$m = (b-a)=2\,t_{\rm max}$; see discussion below.
We only consider the solution on the time interval $[0,t_{\rm max}]$, and
assume that the density of particles vanishes outside of $[a,b]$.

Since we assumed a unit mass density, the model dynamics is completely determined
by the potential $\Psi_0(x)$. 
The trajectories of the sinks in this model have a tree structure;
we denote the respective shock tree by $S(\Psi_0)$. 

Our first goal is to construct the system trajectory, including the 
trajectories of the massive 
sinks, in the phase space 
\[(x,\psi(x,t))\in\mathbb{R}^2, x\in[a,b], t\in[0,t_{\rm max}]\]
and in the respective space-time domain $(x,t)$.
We do this below for consecutively more complicated potentials.
We focus on the shock wave tree $S(\Psi_0)$, and demonstrate
that this tree can be used to reconstruct the entire system dynamics.

{\bf V-shaped potential.}
Let $x_1=c=(a+b)/2$ be the center of the segment $[a,b]$.
Consider a simplest V-shaped potential that consists of a negative
segment on $[a,c]$ and a positive segment on $[c,b]$, see Fig.~\ref{fig:V}.
In this case, there exists a single sink that originates
at $t=0$ at the point $(c,\Psi_0(c))$.
In $x$-space, it remains at rest and accumulates mass at rate 2 during the time
interval of duration $t_{\rm max}$, which reflects accumulation of particles 
that merge into the sink from left and right.
After this, the mass of the sink is $(b-a)$, which reflects complete accumulation
of mass from the interval $[a,b]$.

In the phase space (Fig.~\ref{fig:V}, bottom panel), the trajectory of the sink 
corresponds to a vertical segment of length $\v_1 = t_{\rm max} = (b-a)/2$ between 
points $(x_1,\Psi_0(x_1))$ and $(x_1,\Psi_0(x_1)+\v_1)$.
The trajectory of each particle is a horizontal line from the particle's
initial position $(x,\psi(x,0))$ to the point of merging with the sink at 
$(x_1,\psi(x,0))$. 

In the space-time domain (Fig.~\ref{fig:V}, top panel) the trajectory of the 
sink is a vertical segment between points $(x_1,0)$ and $(x_1,\v_1)$.
The trajectories of particles, each of which moves with its initial velocity 
until merging with the sink, is shown by thin diagonal lines.

  \begin{figure}[hbt]
  	\centering
  	\includegraphics[width=0.8\textwidth]{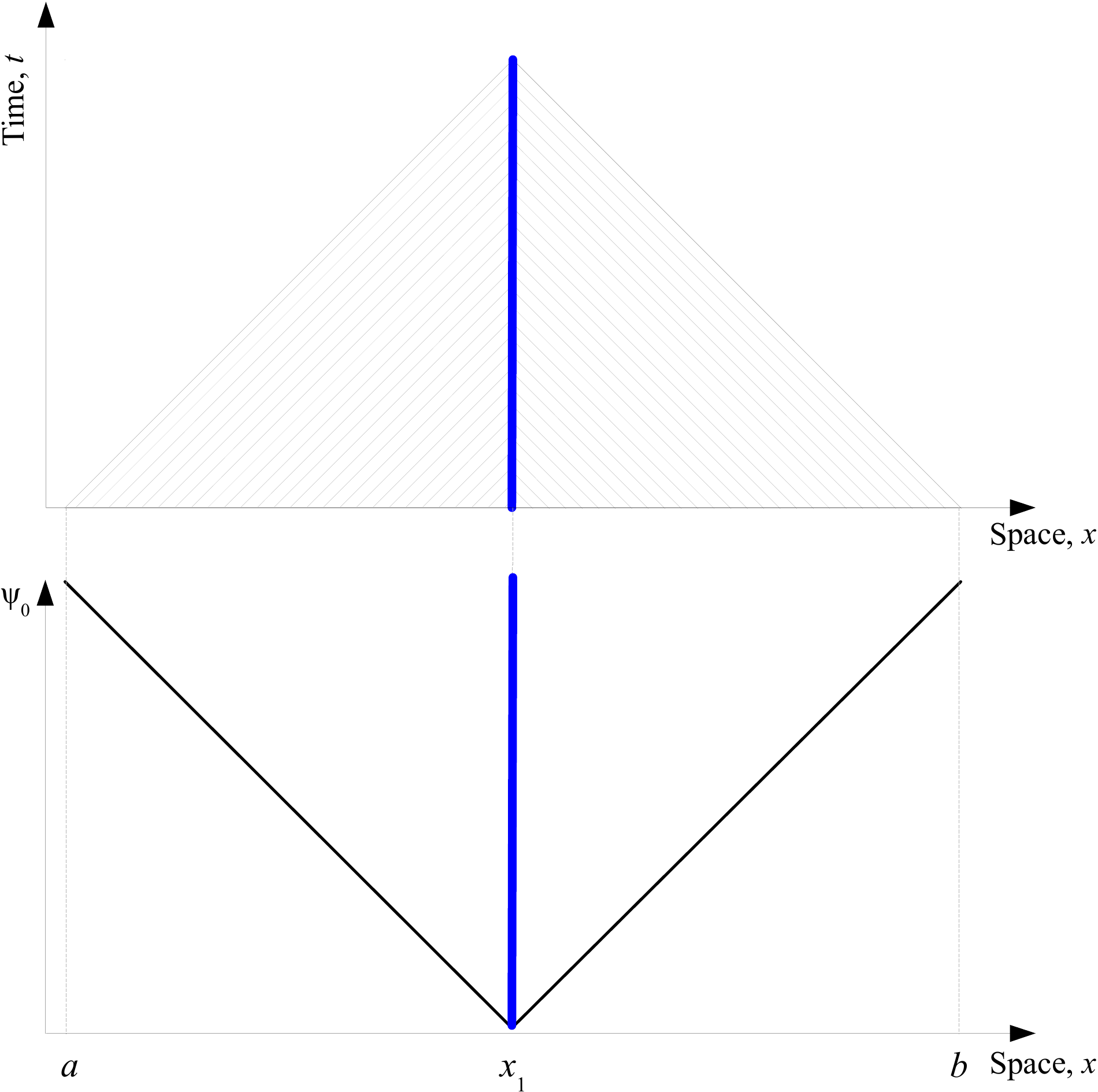}
  \caption{A V-shaped potential. (Bottom): The potential $\Psi_0$ (black line)
  and the shock wave in the phase space $(x,\psi)$ (blue segment).
  (Top): The space-time portrait.
  The system occupies a triangular (shaded) region in the $(x,t)$ space. 
  Thin hatching illustrates the trajectories of particles.
  Blue vertical segment show the trajectory of the sink.}
  \label{fig:V}
  \end{figure}

{\bf W-shaped potential.}
Consider now a negative excursion on $[a,b]$ with exactly two local minima
at $x_1, x_3$ and the only local maxima at $x_2$, with $a<x_1<x_2<x_3<b$, see
Fig.~\ref{fig:W}.
There exist two sinks that originate at $t=0$ at points $x_1$ and
$x_3$. 
The sink at $x_1$ remains at rest and accumulates mass at rate 2
during time interval of duration $\v_1 = \Psi_0(x_2) - \Psi_0(x_1)$.
At instant $t = \v_1$ the right neighborhood of the sink 
at $x_1$ becomes empty, and it starts moving at unit speed to the right.
Similarly, the sink at $x_3$ remains at rest and accumulates mass at rate 2
during time interval of duration $\v_3 = \Psi_0(x_2) - \Psi_0(x_3)$.
At instant $t = \v_3$ the left neighborhood of the sink 
at $x_3$ becomes empty, and it starts moving at unit speed to the left.
The two sinks move toward each other until they merge to
form a new sink of mass $2(\v_1+\v_3)$.
We denote by $\h_i$, $i=1,3$ the durations of these respective movements.
Since both right and left neighborhoods of the new sink are 
occupied by regular particles, the particle remains at rest for some time.

The following lemma describes the shape of the respective graphical shock 
trees 
$\cG^{(x,\psi)}(\Psi_0)$ and $\cG^{(x,t)}(\Psi_0)$ in the phase space and space-time domain,
respectively.

\begin{lem}[{{\bf Shock tree of a W-shaped potential}}]
\label{lem:W}
For a W-shaped potential described above (and illustrated in Fig.~\ref{fig:W}) 
we have:
\begin{itemize}
\item[(a)] The graphical shock tree $\cG^{(x,\psi)}(\Psi_0)$ in the phase 
space  
has the bracket shape that consists of two leaves and a root edge
(Fig.~\ref{fig:W}, bottom panel).
Each leaf corresponds to the dynamics of one of the two initial sinks;
the root edge corresponds to the dynamics of the final sink.
Each leaf consists of a vertical segment between
points $(x_i,\Psi_0(x_i))$ and $(x_i,\Psi_0(x_2))$,
and a horizontal segment between points
$(x_i,\Psi_0(x_2))$ and $(c,\Psi_0(x_2))$, for $i=1,3$.
The stem consists of a vertical segment between points
$(c,\Psi_0(x_2))$ and $(c,\Psi_0(a))$.
Here $c=(a+b)/2$ is the center of the interval $[a,b]$.

\item[(b)] In the space-time domain $(x,t)$, the system occupies 
a {\it cone} $\cC$ that has the shape of a right triangule with the hypothenuse 
on the interval $x\times t = [a,b]\times\{0\}$
and the catheti merging at the point $(c,c)$. 
The trajectories of the sinks form an inverted Y-shaped tree
shown in Fig.~\ref{fig:W} (top panel) that consists of two leaves and a stem.
Each leaf corresponds to the dynamics of one of the two initial sinks;
the stem corresponds to the dynamics of the final sink.
Each leaf consists of a vertical segment between
points $(x_i,0)$ and $(x_i,\v_i)$,
and a slanted segment between points
$(x_i,\v_i)$ and $(c,\v_1+\v_3)$, for $i=1,3$.
The stem consists of a vertical segment between points
$(c,\v_1+\v_3)$ and $(c,c)$.
There exists a rectangular empty region (no particles) with vertices at the points
(clock-wise from the bottom point): $(x_2,0)$, $(x_1,\v_1)$,
$(c,\v_1+\v_3)$, and 
$(x_3,\v_3)$.

\item[(c)] The particles move with their initial velocities
until they merge with a sink. 
A particle $x$ in the interval 
$[x_1-\v_1, x_1+\v_1 = x_2)$
merges with the sink at point $x_1$
at time instant $t = |x_1-x|$.
A particle $x$ in the interval 
$(x_3-\v_3=x_2, x_3+\v_3]$
merges with the sink at point $x_3$ at time instant $t=|x_3-x|$.
A particle $x$ in the intervals 
$[a,x_1-\v_1)$ and
$(x_3+\v_3,b]$
merges with the sink at point $(a+b)/2$ at time
instant $t = |(a+b)/2-x|$.
The particle at $x_2$ merges the sink
at $x_1$ ($x_3$) if the potential is left (right) continuous
at time instant $t=x_2 - x_1$ ($t=x_3-x_2$).

\end{itemize}
\end{lem}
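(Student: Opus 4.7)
\emph{Proof proposal.} I would establish the space-time picture of part~(b) first, deduce the particle assignment of part~(c) as a corollary, and then extract the phase-space picture of part~(a). Since $\Psi_0$ has slopes $\pm 1$, direct computation gives $\v_1 = x_2 - x_1$, $\v_3 = x_3 - x_2$, $\Psi_0(x_1) = -(x_1-a)$, $\Psi_0(x_3) = -(b-x_3)$, and summing signed segment lengths in $\Psi_0(b) - \Psi_0(a) = 0$ yields $c = x_1 + x_3 - x_2 = (a+b)/2$. The negativity $\Psi_0(x_2)<0$ forces $\v_1 < x_1 - a$ and $\v_3 < b - x_3$, so the initial rest phases described below terminate strictly inside the domain. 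At $t=0$ the only sink sources are the local minima $x_1, x_3$, where sinks $\sigma_1, \sigma_3$ appear; a stationary sink with unit density and unit adjacent speeds absorbs mass at rate $2$, so by time $t$ the sink at $x_i$ has swallowed $(x_i-t, x_i+t)$. Hence $\sigma_1$'s right neighborhood first becomes empty at time $\v_1$ and $\sigma_3$'s left neighborhood at time $\v_3$.

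Next I would apply the convention of Section~\ref{sec:shock} to a sink with one empty neighborhood and incoming particles of velocity $\pm 1$ on the opposite side: $\sigma_1$ starts moving at velocity $+1$ at time $\v_1$, giving $\sigma_1(t) = x_1 + (t-\v_1)$, and symmetrically $\sigma_3(t) = x_3 - (t-\v_3)$. Since each sink moves at the same speed as the particles feeding it, neither absorbs additional mass during this phase. Solving $\sigma_1(t) = \sigma_3(t)$ via the identity $\v_1 + \v_3 = x_3 - x_1$ yields coalescence at $(c, \v_1+\v_3)$. After coalescence the merged sink has both sides populated by inward-moving particles, so it remains at rest and absorbs at rate $2$ until $t_{\max} = (b-a)/2$. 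The four segments comprising the two stationary leaves and the two sink-motion legs have alternating slopes $\pm 1$, and their endpoints $(x_2,0)$, $(x_1,\v_1)$, $(c,\v_1+\v_3)$, $(x_3,\v_3)$ form a rectangle whose interior is not traversed by any particle trajectory, establishing part~(b).

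Part~(c) follows by computing collision times directly: a particle at $x \in [x_1-\v_1, x_2)$ reaches $x_1$ at time $|x_1 - x| \le \v_1$ and is absorbed by $\sigma_1$; analogously for $\sigma_3$; particles in $[a, x_1-\v_1) \cup (x_3+\v_3, b]$ miss the leaf sinks and are absorbed by the merged sink at $c$ at time $|c-x| \ge \v_1 + \v_3$. For part~(a) I would track the top of the $\psi$-jump at each sink in the Eulerian profile $x \mapsto \psi(x,t)$: during a rest phase the sink absorbs at rate $2$, which lifts the $\psi$-jump by $1$ per unit time, so the top rises from $(x_i, \Psi_0(x_i))$ to $(x_i, \Psi_0(x_i)+\v_i) = (x_i, \Psi_0(x_2))$; during lateral motion no mass is added, so the top is frozen at $\Psi_0(x_2)$ while the sink slides from $x_i$ to $c$, producing the horizontal legs; the merged sink then rises vertically from $(c, \Psi_0(x_2))$ to $(c, 0) = (c, \Psi_0(a))$ during its final rest phase.

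The main obstacle is part~(a): verifying that the $\psi$-level at the top of a \emph{moving} sink is genuinely frozen at $\Psi_0(x_2)$. This requires computing $\psi(\sigma_i(t)^\pm, t)$ from $v = -\partial_x\psi$ and handling the expanding empty region behind each moving sink, for which one must adopt a convention such as $\psi$ being constant on empty intervals adjacent to the boundary of the domain. All remaining assertions reduce to straight-line ballistic bookkeeping once the half-empty sink rule of Section~\ref{sec:shock} has been invoked.
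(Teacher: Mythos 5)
Your proposal is correct and follows essentially the same route as the paper, which proves the lemma simply by appeal to the explicit construction of the annihilation dynamics (and Fig.~\ref{fig:W}); you have merely written out in full the ballistic bookkeeping — sink rest/motion phases, the half-empty-neighborhood rule, the coalescence at $(c,\v_1+\v_3)$, and the tracking of $\psi$ at the sink — that the paper leaves implicit. The one subtlety you flag (the value of $\psi$ at a moving sink being frozen at $\Psi_0(x_2)$, and the convention for $\psi$ on empty intervals) is real but is equally elided by the paper's proof, so your treatment is, if anything, more complete.
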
 
\begin{proof}
By construction of the solution of the continuum annihilation dynamics for a potential 
$\Psi_0(x)\in\widetilde{\E}([a,b])$, see Fig.~\ref{fig:W}.
\end{proof}

\begin{figure}[hbt]
\centering
\includegraphics[width=0.8\textwidth]{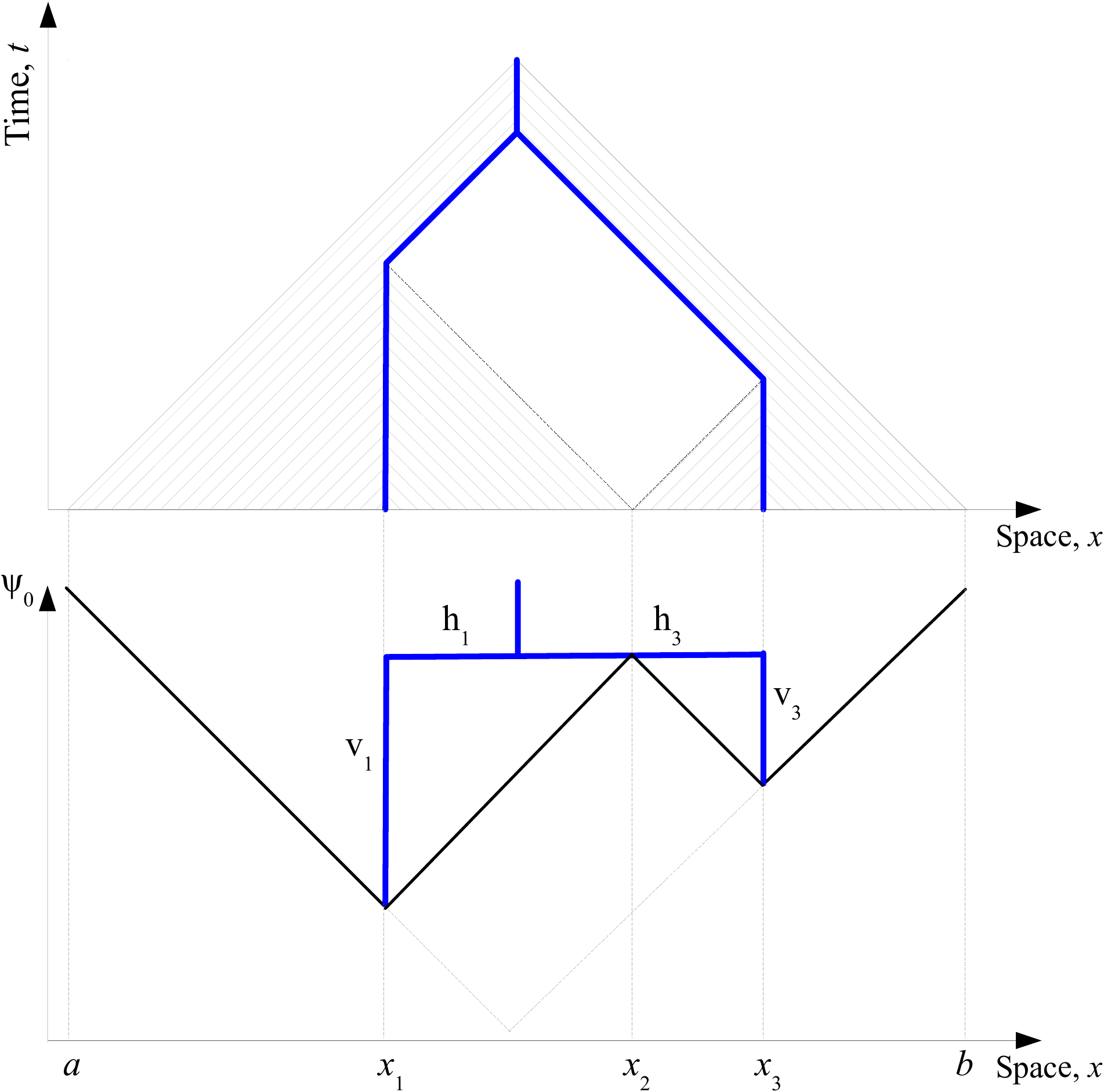}	
\caption{
  A W-shaped potential. (Bottom): The potential $\Psi_0$ (solid black)
  and the shock in the phase space $(x,\psi)$ (blue).
  (Top): The space-time portrait.
  The system occupies a shaded region in the $(x,t)$ space,
  bounded by a triangle that corresponds to the V-shaped potential
  on the interval $[a,b]$, as in Fig.~\ref{fig:V}.
  Notice the appearance of an empty rectangular region
  in the space-time portrait that corresponds to $(x,t)$ locations
  with no particles.
  Thin hatching illustrates the trajectories of particles.
  Blue lines show the trajectories of sinks.}
\label{fig:W}
\end{figure}  
  
We make now an important symmetry observation, which helps to extend
our geometric construction of the shock tree to an arbitrary potential
from $\widetilde{\E}$.
We define the {\it basin} $\cB_{j}$ for a local maximum 
at $x_j$ as the shortest interval that contains $x_j$ and supports 
a non-positive excursion in $\Psi_0(x)$.
Formally, $\cB_{j}=[x^{\rm left}_j, x^{\rm right}_j]$, where
\[x^{\rm right}_j = \inf(x:x>x_j\text{ and }\Psi_0(x)>\Psi(x_j)),\]
\[x^{\rm left}_j = \sup(x:x<x_j\text{ and }\Psi_0(x)>\Psi(x_j)).\]
The basin length is denoted by $|\cB_j| = x^{\rm right}_j-x^{\rm left}_j$.

\begin{lem}[{{\bf Symmetry lemma}}]
\label{lem:symmetry}
Let $\v_i, \h_i$ for $i=1,3$ be the lengths of the vertical and
horizontal segments, respectively, of the leaves of the shock tree for 
a W-shaped potential (see Fig.~\ref{fig:W}):
\[\v_i = \Psi_0(x_2)-\Psi_0(x_i),\quad
\h_i=|(a+b)/2 - x_i|.\]
Then
\[\h_1=\v_3,\quad \h_3=\v_1\] 
and
\[\v_i+\h_i = |\cB_2|/2 = (b-a)/2 - (\Psi_0(a) - \Psi_0(x_2)).\]
\end{lem}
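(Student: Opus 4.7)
The plan is to reduce the lemma to elementary bookkeeping using the unit-slope hypothesis. Since every linear segment of $\Psi_0$ has slope $\pm 1$, each horizontal gap between successive extrema equals the corresponding vertical gap in potential values. Writing $M:=\Psi_0(a)=\Psi_0(b)$ (the level of the excursion), the four monotone pieces of $\Psi_0$ give
\begin{align*}
x_1 - a &= M - \Psi_0(x_1), & x_2 - x_1 &= \Psi_0(x_2)-\Psi_0(x_1)=\v_1,\\
x_3 - x_2 &= \Psi_0(x_2)-\Psi_0(x_3)=\v_3, & b - x_3 &= M - \Psi_0(x_3).
\end{align*}
Using $\Psi_0(x_i)=\Psi_0(x_2)-\v_i$ for $i=1,3$ and summing these four identities I obtain
\[b-a = 2(\v_1+\v_3) + 2\bigl(M-\Psi_0(x_2)\bigr),\]
so that $(b-a)/2 = \v_1+\v_3 - \bigl(\Psi_0(a)-\Psi_0(x_2)\bigr)$, which is already the closed form on the right of the claimed identity.

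Next I would compute $c:=(a+b)/2$ and the leaf $x$-coordinates explicitly. From the displayed identities, $x_1 = a+M-\Psi_0(x_2)+\v_1$ and $x_3 = x_1+\v_1+\v_3$, while $c = a + \v_1+\v_3+M-\Psi_0(x_2)$. Subtracting gives $c-x_1=\v_3$ and $x_3-c=\v_1$. Since the excursion hypothesis forces $x_1<c<x_3$ (indeed, $x_1-a$ and $b-x_3$ are both at most $(b-a)/2$ and strictly less unless $\v_1=\v_3=0$), these differences are the definitions of $\h_1$ and $\h_3$, yielding $\h_1=\v_3$ and $\h_3=\v_1$. In particular $\v_1+\h_1=\v_3+\h_3=\v_1+\v_3$.

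For the middle equality $\v_i+\h_i=|\cB_2|/2$, I would unwind the definition of the basin at the local maximum $x_2$. Moving to the right of $x_2$, the potential drops with slope $-1$ to the value $\Psi_0(x_3)$ and then rises back with slope $+1$, so the first point where $\Psi_0$ strictly exceeds $\Psi_0(x_2)$ is $x_3+\v_3$; a symmetric argument places the left endpoint at $x_1-\v_1$. Hence
\[|\cB_2| = (x_3+\v_3)-(x_1-\v_1) = (x_3-x_1) + \v_1+\v_3 = 2(\v_1+\v_3),\]
so $|\cB_2|/2=\v_1+\v_3$, which agrees with both $\v_i+\h_i$ and with the closed form derived in the first step. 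There is no serious obstacle in the argument; the only subtlety worth flagging is that the full excursion condition $\Psi_0(a)=\Psi_0(b)$ is what makes the four slope identities collapse symmetrically, so that the final length formula can be written purely in terms of $\Psi_0(a)$ and $\Psi_0(x_2)$.
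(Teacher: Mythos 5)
Your proof is correct and takes essentially the same route as the paper, whose entire proof is a one-line appeal to the ``elementary geometric properties'' of Fig.~\ref{fig:W}; your slope bookkeeping simply makes those properties explicit, and all the key identities ($c-x_1=\v_3$, $x_3-c=\v_1$, $|\cB_2|=2(\v_1+\v_3)$) check out. One transcription slip to fix: from $b-a=2(\v_1+\v_3)+2\bigl(\Psi_0(a)-\Psi_0(x_2)\bigr)$ the displayed consequence should read $\v_1+\v_3=(b-a)/2-\bigl(\Psi_0(a)-\Psi_0(x_2)\bigr)$ rather than $(b-a)/2=\v_1+\v_3-\bigl(\Psi_0(a)-\Psi_0(x_2)\bigr)$, though the conclusion you actually draw from it (and everything downstream) is the correct one.
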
 
\begin{proof}
By elementary geometric properties of the tree for a W-shaped
potential illustrated in Fig.~\ref{fig:W}. 
\end{proof}

Lemma~\ref{lem:symmetry} implies that after instant $t=|\cB_2|/2$
when the sinks that originate at $x_1$ and $x_3$ merge,
the process dynamics is indistinguishable from that of
the V-shaped potential on $[a,b]$.
This allows us to construct the shock tree in a general case,
using a sequential construction by basin lengths.

{\bf General potential.}
Consider a potential $\Psi_0(x)\equiv\Psi_1(x)\in\widetilde{\E}([a,b])$.
The shock tree for the V-shaped potential was constructed above.
If the potential is not V-shaped, it has $n\ge 1$ local maxima
(see illustrations in Figs.~\ref{fig:unfold},\ref{fig:C}).
Consider the basins that correspond to the local
maxima of $\Psi_1(x)$ and index them according to their lengths, from shortest to longest:
\[|\cB_1| < |\cB_2| <\dots < |\cB_n|.\]
Let $t_i=|\cB_i|/2$.
For each basin $\cB_i$ we define the corresponding {\it space-time cone}
$\cC_i$ that has the shape of a right triangle with hypothenuse
$x\times t = \cB_i\times \{0\}$ and the catheti merging at 
the point $(x_i^{\rm left}+t_i,t_i)$.

It is readily seen that the shortest basin necessarily corresponds to a W-shaped potential.
We construct the shock tree for the regular particles within the space 
interval $\cB_1$ during the time interval $[0, t_1]$, 
using the W-shaped potential construction of Lemma~\ref{lem:W}.
Hence, we describe the system dynamics in the space-time cone
$\cC_1$.

Consider now an {\it unfolded} potential $\Psi_2(x)$ that coincides with
$\Psi_1(x)$ outside of $\cB_1$ and has a V-shaped form on $\cB_1$.
By construction, and using Lemma~\ref{lem:W}, the trees that correspond to the potentials
$\Psi_1$ and $\Psi_2$ coincide outside of the cone $\cC_1$
in the space-time domain.
The potential $\Psi_2(x)$ has $n-1$ local minima.
Its shortest basin is $\cB_2$; it necessarily corresponds 
to a W-shaped potential within $\Psi_2(x)$.
We use $\Psi_2(x)$ to construct the space-time tree on 
$\cC_2$, using the W-shaped potential algorithm of Lemma~\ref{lem:W}. 
The resulting tree is only considered within the space-time
subregion
$ \cC_2\setminus\cC_1.$
The union of this tree and the tree constructed in the initial step
within $\cC_1$ results in the tree within 
$\cC_1\cup\cC_2$.

    \begin{figure}[hbt]
  	\centering
	\includegraphics[width=0.7\textwidth]{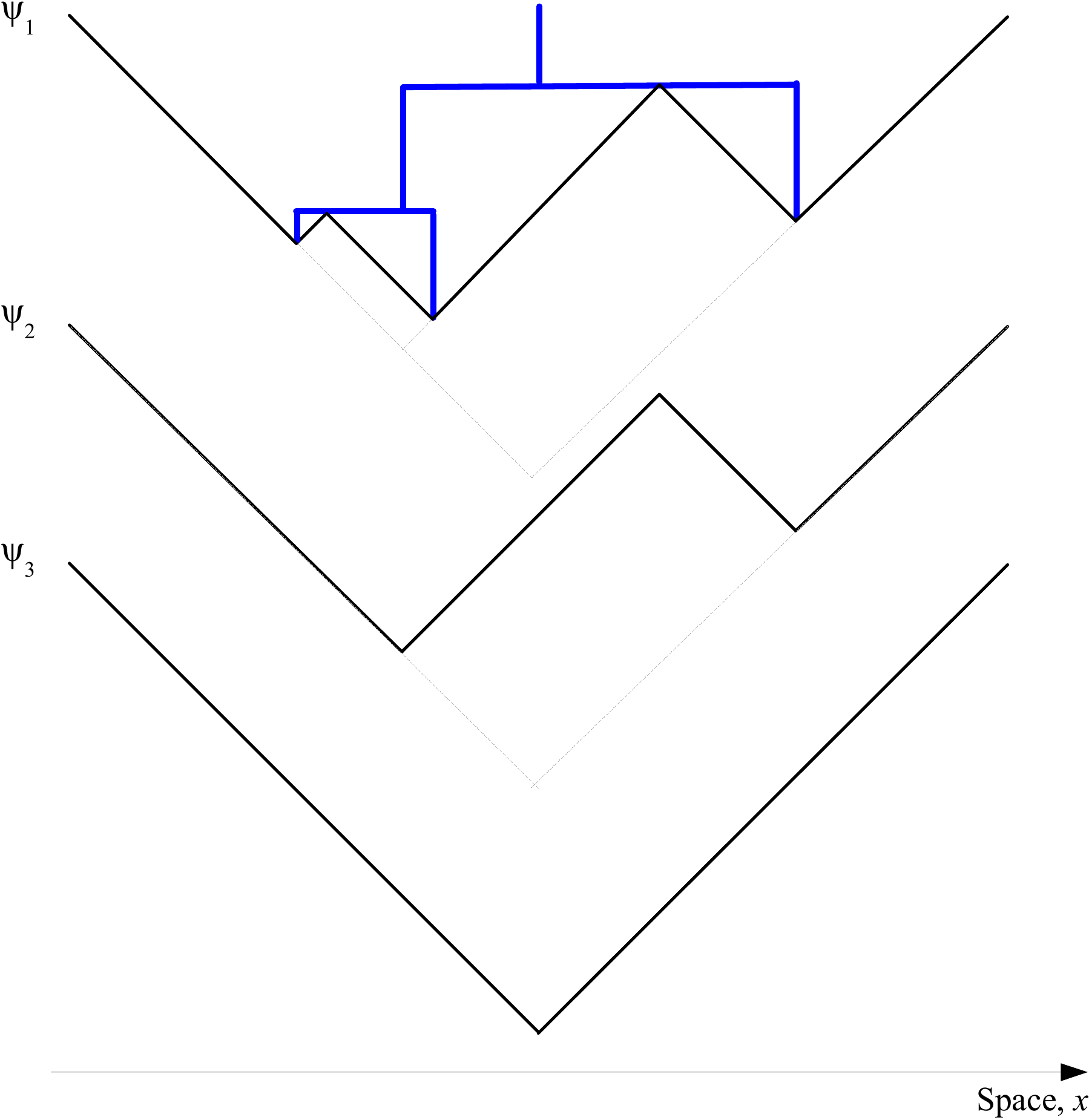}
  \caption{Potential unfolding: an illustration. The potential $\Psi_2(x)$
  is an unfolding of $\Psi_1(x)$, and $\Psi_3(x)$ is an unfolding of $\Psi_2(x)$.
  The shock tree in the phase space is shown (blue segments) next to the 
  initial potential
  $\Psi_1(x)$.}
  \label{fig:unfold}
  \end{figure}

Consider now a set of unfolded potentials $\Psi_i(x)$,
$i=3,\dots,n$, 
such that $\Psi_i(x)$ coincides with $\Psi_{i-1}(x)$ on
$[a,b]\setminus \cB_i$ and has a V-shaped negative excursion
on $\cB_{i-1}$, see Fig.~\ref{fig:unfold}.
By construction, the shortest basin within every $\Psi_i(x)$ is $\cB_i$; 
and this basin supports a W-shaped potential.
We apply the W-shaped potential algorithm to each
potential $\Psi_i(x)$ within the basin
$\cB_i$, hence consecutively
extending the shock tree construction to the space-time subsets 
\[\bigcup_{j=1}^{i}\cC_j,\quad i=3,\dots,n.\]
\noindent At instant $t_n$ there exists a single sink within
a V-shaped potential $\Psi_n(x)$ on $[a,b]$,
which is treated according to the V-shaped potential construction.
This completes the space-time tree construction.

Figure~\ref{fig:C} illustrates the above process for a 
potential with 4 local maxima.
The space-time cones $\cC_i$, $i=1,\dots,5$ are labeled in the figure.
Here, the largest cone $\cC_5$ corresponds to the entire space-time
system's domain.

Observe that the graphical shock trees $\cG^{(x,\psi)}$ and $\cG^{(x,t)}$ in 
the phase space 
and in the space-time domain have
the same combinatorial structure and planar embedding, coinciding with that of 
$S(\Psi_0)$ (recall that embedding
only involves ordering between the offspring of the same parent, and is 
different from a particular graphical representation of a tree);
see Figs.~\ref{fig:V},\ref{fig:W},\ref{fig:tree}.

  \begin{figure}[hbt]
  	\centering
	\includegraphics[width=0.7\textwidth]{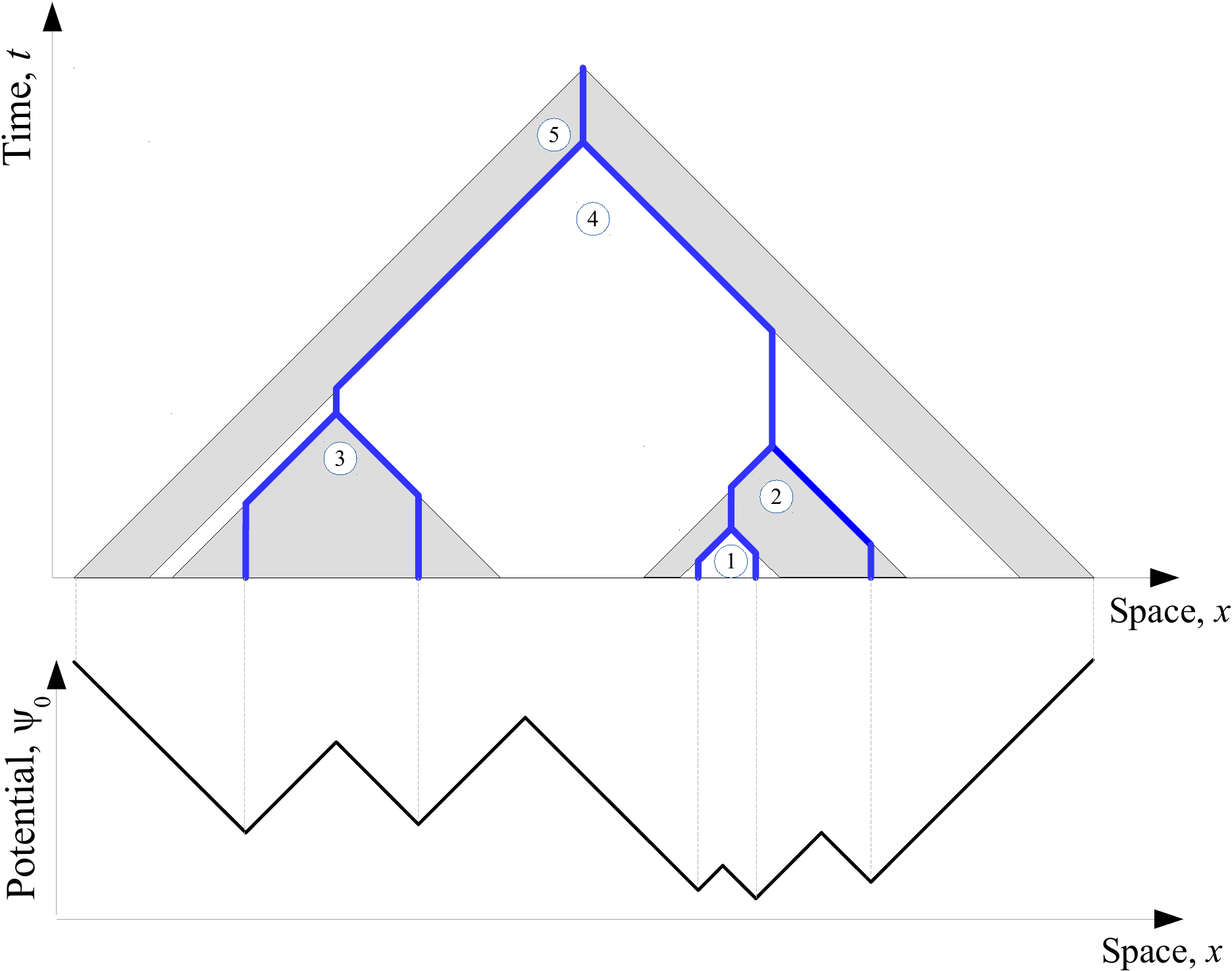}
  \caption{Iterative solution construction: 
  an illustration for a potential $\Psi_0(x)$ (bottom panel) with four local maxima. 
  (Top): Space-time cones $\cC_1,\dots,\cC_4$ that correspond to the basins $\cB_1,\dots,\cB_4$.
  Blue segments show the shock tree. 
  The cone $\cC_5$ corresponds to the V-shaped potential on the whole space interval.}
  \label{fig:C}
  \end{figure}

{\bf Combinatorial tree structure and planar embedding.}
By construction, there is one-to-one correspondence between internal local 
maxima of $\Psi_0(x)$ and internal non-root vertices of $S(\Psi_0)$. 
There is also a one-to-one correspondence between local minima and the leaves.
We label the tree vertices by indices $j$ that correspond to the 
local extrema $x_j$ of $\Psi_0(x)$.

Each internal non-root vertex $j$ has degree 2.
To find its offspring, consider the basin $\cB_j$ of a local maximum $x_j$.
It is sub-divided into two sub-basins by point $x_j$.
The right (left) offspring of $j$ corresponds to the internal 
local extrema with the maximal value within the right (left) sub-basin.
In case of a V-shaped potential, this is the only local minimum.
Otherwise, it is the internal local maximum with the maximal value.

We write ${\sf parent}(i)$ for the index of the single parent of the vertex $i$;
${\sf right}(i)$ and ${\sf left}(i)$ for the indices of the right and left
offspring of an internal non-root vertex $i$;
and ${\sf sibling}(i)$ for the index of the sibling of vertex $i$. 

{\bf Metric tree structure in the phase space.}
We let $c_i$ be the center of the basin $\cB_i$ and
\[\v_i = \Psi_0(x_{{\sf parent}(i)})-\Psi_0(x_i),\quad
\h_i = |\cB_{{\sf sibling}(i)}|/2.\]
The length $l_i$ of the parental edge of a non-root vertex $i$ 
within $S(\Psi_0)$ is given by 
$l_i = \v_i + \h_i.$
The shock tree $S(\Psi_0)$ can be considered a length metric space 
with the Euclidean length measured along the tree branches.
Figure~\ref{fig:tree} shows graphical shock trees $\cG^{(x,\psi)}$ 
and $\cG^{(x,t)}$ for an initial potential with two local maxima and
three local minima and illustrates the labeling of
vertical ($\v_i$) and horizontal ($\h_i$) segments of the tree.

{\bf Graphical shock tree in the phase space.}
Here we give a concise description of the graphical tree $\cG^{(x,\psi)}(\Psi_0)$.

The shortest description is the following:
\[\cG^{(x,\psi)}(\Psi_0) = \{(\sigma,\psi(\sigma,t))
\text{ such that there exists a sink } \sigma(t)\},\]
with the length of a segment between points $(x,\psi(x,t))$ and $(y,\psi(y,s))$
given by $|t-s|$.
Since all segments of the tree consist of horizontal and vertical lines,
the segment lengths can be measured in Euclidean metric on the plane.

A longer, albeit more constructive, description states that the tree
$\cG^{(x,\psi)}(\Psi_0)$
is the union of the following vertical and horizontal segments:
\begin{itemize}
\item[$(\v_{\rm min})$] For every local minimum $x_j$ of $\Psi_0(x)$ there exists a vertical 
segment from $(x_j,\Psi_0(x_j))$ to $(x_j,\Psi_0(x_j)+\v_j)$.
\item[$(\v_{\rm max})$] For every local maximum $x_j$ of $\Psi_0(x)$ there exists a vertical
segment from $(c_j,\Psi_0(x_j))$ to $(c_j,\Psi_0(x_j)+\v_j)$.
\item[(h)] For every local maximum $x_j$ of $\Psi_0(x)$ there exists a horizontal
segment of length $\h_j$ 
from $(c_{{\sf left}(j)},\Psi_0(x_j))$ to $(c_{{\sf right}(j)},\Psi_0(x_j))$.
\end{itemize}

The following statement summarizes the correspondence between
the dynamics of the sinks and the graphical tree $\cG^{(x,\psi)}(\Psi_0)$.
An analogous statement holds for the graphical shock tree 
$\cG^{(x,t)}(\Psi_0)$ 
in the space-time domain. 

\begin{lem}[{{\bf Shock tree}}]
\label{lem:SWT}
Let $\cG\equiv\cG^{(x,\psi)}(\Psi_0)$ be the graphical shock tree  
of a continuum annihilation dynamics with potential $\Psi_0(x)\in\widetilde{\E}([a,b])$. 
The following statements hold:
\begin{itemize}

\item[a)] There exists a one-to-one correspondence between points 
$z\in \cG$
and space-time locations $(x_z,t_z)$ of sinks.
In particular, there exists a one-to-one correspondence between the 
sinks at instant $t=0$ and the leaves of $\cG$, and
a one-to-one correspondence between the instants when two
sink merge (and hence a new sink creates) and the 
internal vertices of $\cG$.

\item[b)] Every sink at any time can either be at rest and 
accumulate mass at rate 2, or move with a unit speed with no mass 
accumulation. 
A point on any vertical segment of $\cG$
corresponds to a sink at rest. 
A point on any horizontal segment of $\cG$
corresponds to a sink in motion.

\item[c)] Suppose a point $z\in\cG$ corresponds
to a sink with mass $m_z$ at location $(x_z,t_z)$.
Then $t_z$ equals the length from $z$ to any descendant leaf within $\cG$.
The mass $m_z\le 2t_z$ equals double the total length of the 
vertical part of the tree $\cG$ descendant to $z$.
In particular, $m_z=2\,t_z$ if and only if $z$ is located on a vertical segment of $\cG$.

\item[d)] The horizontal segment of every vertex within $\cG$ has the length 
equal to the total length of the vertical segments within its sibling tree.
In other words, the time spent by a sink in motion equals
half the mass of the sink with which it collides.

\end{itemize}
\end{lem}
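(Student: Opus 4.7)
The plan is to prove Lemma~\ref{lem:SWT} by induction on the number $n$ of internal local maxima of $\Psi_0$, mirroring the iterative unfolding construction of $\cG(\Psi_0)$ described just above. The base case $n=0$ is the V-shape: the tree is a single vertical segment of length $t_{\max}=(b-a)/2$, a single sink sits at rest at $x=(a+b)/2$ accumulating mass at rate $2$ until time $t_{\max}$ with terminal mass $b-a=2\,t_{\max}$, and items (a)--(d) follow by inspection.

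For the inductive step, I would assume (a)--(d) hold for every $\Psi\in\widetilde{\E}$ with fewer than $n$ internal local maxima, and let $\Psi_1$ have $n\ge 1$ such maxima with shortest basin $\cB$ (necessarily W-shaped inside). Form the unfolded potential $\Psi_2$ that replaces $\Psi_1$ on $\cB$ by a V-shape of the same width; then $\Psi_2$ has $n-1$ local maxima, and the construction preceding the lemma shows that the sink dynamics of $\Psi_1$ and $\Psi_2$ coincide outside the cone $\cC$ over $\cB$, while inside $\cC$ the dynamics is exactly the W-shape described by Lemma~\ref{lem:W}. I would therefore verify (a)--(d) inside $\cC$ directly from Lemmas~\ref{lem:W} and~\ref{lem:symmetry}, and outside $\cC$ by invoking the inductive hypothesis on $\Psi_2$, whose graphical tree coincides with $\cG(\Psi_1)$ in that region.

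The gluing step is where the four statements must be checked jointly at the splice. The crucial algebraic input is that the total vertical length of the W-subtree of $\cG(\Psi_1)$ rooted inside $\cB$ equals the vertical length $|\cB|/2$ of the V-leaf that replaces it in $\cG(\Psi_2)$; indeed, Lemma~\ref{lem:symmetry} yields $\v_1+\v_3+|\Psi_0(x_2)|=|\cB|/2$. Given this, part (a) extends through the splice by matching sinks with tree points on each side; part (b) is local to each segment and already verified on both sides; for part (c), on every vertical segment both $t_z$ and the total descendant vertical length grow at rate $1$ as one moves up, on every horizontal segment both are constant, and at every merger vertex both $m_z$ (by conservation of mass) and the descendant vertical length are additive, so the identities $t_z=(\text{length from }z\text{ to any descendant leaf})$ and $m_z=2\cdot(\text{total vertical length descendant to }z)$ propagate from the leaves up to the root; part (d) is the W-shape symmetry $\h_i=\v_{{\sf sibling}(i)}$ promoted to arbitrary sibling subtrees by the inductive hypothesis together with the same algebraic identity.

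The main obstacle I anticipate is the bookkeeping in (d) and in the mass half of (c): showing that each unfolding step replaces a W-subtree by a V-leaf of the same total vertical length, and thereby preserves both the ``vertical mass descendant to a point'' and the sibling-subtree vertical totals. This is controlled entirely by the single identity $\v+\h=|\cB|/2$ of Lemma~\ref{lem:symmetry}, but extracting its global consequence requires a careful telescoping over the sequence of unfoldings. Parts (a) and (b) should be essentially automatic once the tree is constructed, and the length-to-leaf half of (c) follows immediately because absolute time decreases at unit rate as one moves downward along any segment of the tree.
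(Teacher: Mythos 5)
Your proposal is correct and takes essentially the same route as the paper: the paper's proof of Lemma~\ref{lem:SWT} is a one-line appeal to the iterative unfolding construction together with Lemmas~\ref{lem:W} and~\ref{lem:symmetry}, and your induction on the number of internal local maxima is exactly that argument written out in detail. One algebraic slip should be fixed, since you call it the crucial input to the gluing step: for the W-shaped sub-excursion supported on the shortest basin $\cB$, the local maximum sits at the same level as the basin's endpoints, so there is no residual stem inside the cone and Lemma~\ref{lem:symmetry} gives $\v_1+\v_3=|\cB|/2$ directly; the identity you wrote, with the extra term $|\Psi_0(x_2)|$, is instead the statement that the \emph{total} vertical length of the tree of a W-shaped excursion on the full interval $[a,b]$ equals $(b-a)/2$. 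With the basin-relative version of the identity in hand, the telescoping over unfoldings and the verification of parts (c) and (d) go through as you describe.
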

\begin{proof}
Follows from the construction of the tree
$\cG^{(x,\psi)}(\Psi_0)$ and Lemmas \ref{lem:W},\ref{lem:symmetry}.
\end{proof}

Consider a tree $\cV(\Psi_0)\in\L^{|}$ that has the same combinatorial 
structure as $S(\Psi_0)$, and with edge lengths given by $l_i=\v_i$.
Informally, this is a tree that consists of the vertical segments of 
the graphical tree $\cG^{(x,\psi)}(\Psi_0)$.

\begin{thm}[{\bf Shock tree is a level set tree}]
\label{thm:SWT}
Consider a potential $\Psi_0(x)\in\widetilde{\E}$
and the corresponding tree $\cV(\Psi_0)\in\L^{|}$. Then
the tree $\cV(\Psi_0)$ is isometric to the level set tree
of the negative potential $-\Psi_0(x)$:
\[\textsc{level}\left(-\Psi_0\right)=
\cV\left(\Psi_0\right).\]
\end{thm}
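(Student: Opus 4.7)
The plan is to identify $\cV(\Psi_0)$ with $\textsc{level}(-\Psi_0)$ as labeled planar planted trees by matching vertices, parents, edge lengths, and embedding. Both trees admit a canonical labeling of the non-root vertices by the internal local extrema of $\Psi_0$: local minima of $\Psi_0$ (equivalently, local maxima of $-\Psi_0$) are leaves, and internal local maxima of $\Psi_0$ (local minima of $-\Psi_0$) are internal non-root vertices. Both are planted: for $\textsc{level}(-\Psi_0)$ the global minimum of $-\Psi_0$ is attained at the boundary $\{a,b\}$ where $\Psi_0\equiv 0$; for $\cV(\Psi_0)$ the root represents the final sink at the top of the phase-space embedding ($\psi=0$). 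In both cases the stem connects the root to the vertex corresponding to the highest interior local maximum of $\Psi_0$.

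The crux will be matching the parent relation. For $\textsc{level}(-\Psi_0)$, Section~\ref{sec:level} prescribes the parent of a vertex $x_i$ via the nearest local minima of $-\Psi_0$ on each side strictly below $-\Psi_0(x_i)$, i.e., the nearest local maxima $l_i,r_i$ of $\Psi_0$ with $\Psi_0(l_i),\Psi_0(r_i)>\Psi_0(x_i)$; the parent is whichever of the two has the smaller $\Psi_0$-value. For $\cV(\Psi_0)$, which inherits the combinatorics of $S(\Psi_0)$, the parent of $x_i$ is the local maximum $x_j$ such that $x_i$ is the $\Psi_0$-maximal local extremum in one of the two sub-basins into which $x_j$ divides $\cB_j$. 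Taking without loss of generality $x_j=r_i$, I plan to verify (a) that $x_i\in(x^{\rm left}_j,x_j)$, which follows because by minimality of $r_i$ every local maximum of $\Psi_0$ in $(x_i,r_i)$ has value $\le\Psi_0(x_i)<\Psi_0(x_j)$, so $\Psi_0\le\Psi_0(x_j)$ on $[x_i,x_j]$; and (b) that $x_i$ is $\Psi_0$-maximal among local extrema of the left sub-basin, by contradiction: any competitor $x_k$ with $\Psi_0(x_k)>\Psi_0(x_i)$ forces an intermediate local maximum $M$ of $\Psi_0$ with $\Psi_0(M)>\Psi_0(x_i)$ strictly between $x_i$ and $x_k$, and a short case analysis places $M$ either closer to $x_i$ than $l_i$ (violating the defining minimality of $l_i$) or outside $\cB_j$ because $\Psi_0(M)>\Psi_0(x_j)$ (violating membership in the sub-basin).

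Once the parent relation matches, the edge lengths coincide automatically: in both trees the parental edge of a non-root $x_i$ has length $\Psi_0(x_{{\sf parent}(i)})-\Psi_0(x_i)=\v_i$; the stem length equals $|\Psi_0(x_j)|$ in both conventions, with the root value taken as $0=\Psi_0|_{\partial[a,b]}$ in $\textsc{level}(-\Psi_0)$ and as $\psi=0$ in the phase-space embedding of $\cV(\Psi_0)$. The planar embedding also agrees, because both constructions order siblings by their left-to-right spatial position on $[a,b]$.

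The main obstacle is the case analysis in step (b), which must handle $x_i$ being either a leaf (local minimum) or an internal vertex (local maximum), and competing extrema $x_k$ of either type. A slicker alternative I would attempt first is to invoke the reciprocity Lemma~\ref{lem:recip}: if one reads off the depth-first traversal of $\cV(\Psi_0)$ from the stack of vertical segments in $\cG^{(x,\psi)}(\Psi_0)$ and uses the Symmetry Lemma~\ref{lem:symmetry} to check that the resulting Harris path coincides with $-\Psi_0$ after a natural monotone time reparameterization, then $\cV(\Psi_0)=\textsc{level}(H_{\cV(\Psi_0)})=\textsc{level}(-\Psi_0)$ follows at once, sidestepping the combinatorial case analysis entirely.
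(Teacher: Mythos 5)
Your proposal is correct and follows essentially the same route as the paper, whose proof is a one-line appeal to the two constructions: you simply make explicit what that appeal requires, namely the identification of vertices with local extrema, the matching of the basin-based parent rule of $S(\Psi_0)$ with the adjacent-lower-minima parent rule of $\textsc{level}(-\Psi_0)$, the edge lengths $\v_i=\Psi_0(x_{{\sf parent}(i)})-\Psi_0(x_i)$, and the planar embedding. Your alternative shortcut via Lemma~\ref{lem:recip} is also sound, but it carries the same combinatorial content, since verifying that $H_{\cV(\Psi_0)}$ coincides with $-\Psi_0$ up to monotone reparameterization reduces to the same extremum-by-extremum comparison.
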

\begin{proof}
Follows from construction of the shock trees in this section
and construction of the level set tree in Sect.~\ref{sec:level}.
Considering a negative potential reflects the fact that the level set tree
is constructed top to bottom (leaves correspond to local maxima), and the 
shock tree is constructed bottom to top (leave correspond to local minima).
\end{proof}

\begin{figure}[h]
	\centering
	\includegraphics[width=0.7 \textwidth]{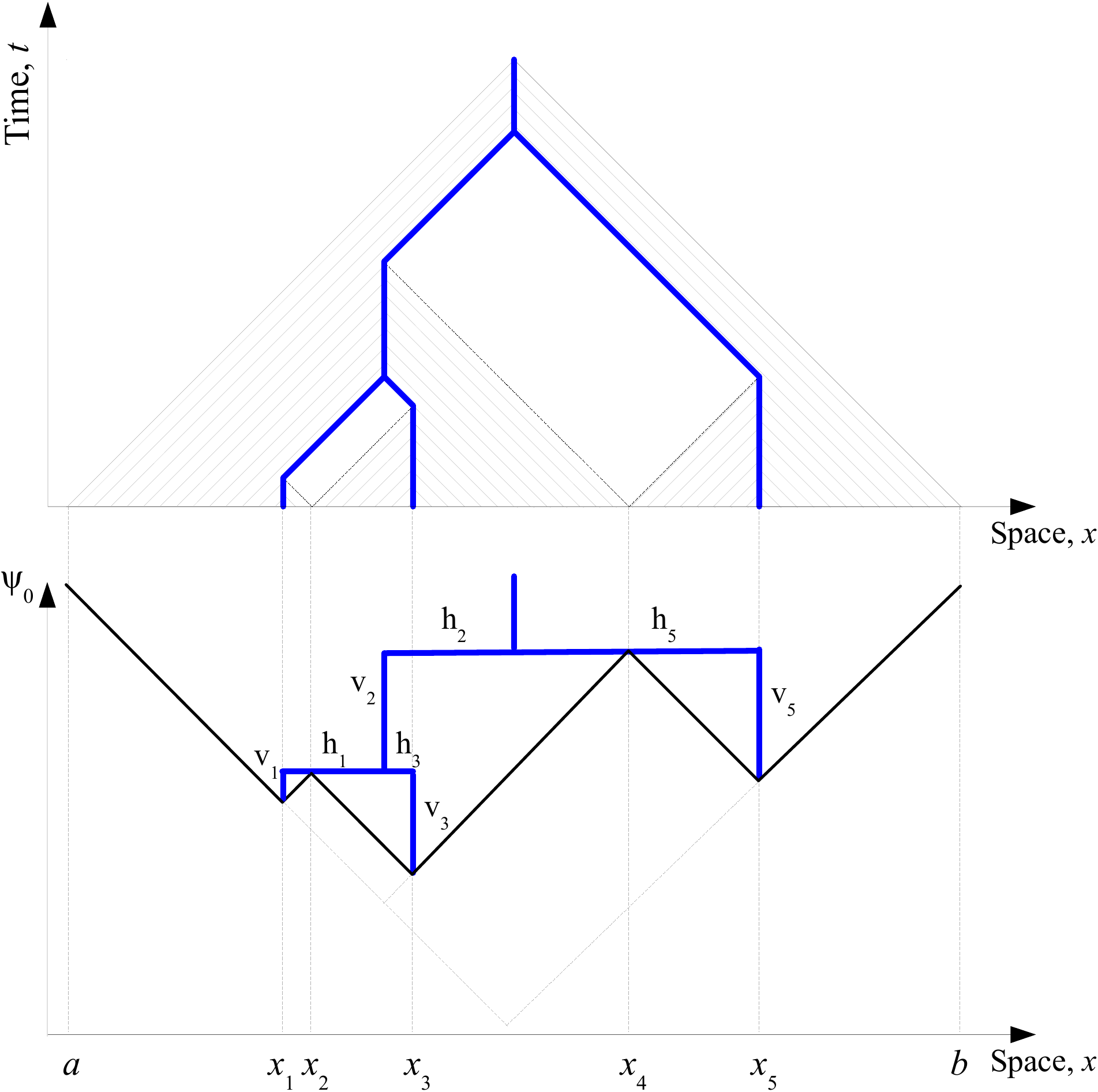}
	\caption{Shock tree for a piece-wise linear potential with two local 
	maxima.
	(Top): The shock tree in space-time domain (blue). Hatching illustrates 
	motion of 
	regular particles. There exist two empty rectangular areas, each corresponding to one
	of the local maxima.
	(Bottom): Potential $\Psi_0(x)$ (black) and the shock tree in the phase 
	space (blue).
	The figure illustrates the labeling of vertical ($\v_i$) and horizontal ($\h_i$) 
	segments of the tree.}
	\label{fig:tree}
\end{figure}

%%%%%%%%%%%%%%%%%%%%%%%%%%%%%%%%%
\subsection{Ballistic annihilation of a unit-slope potential}
\label{sec:Bdyn}
Suppose that a tree $T\in\L$ has a particular
graphical representation $\cG_T\in\mathbb{R}^2$ and let
$f:T\rightarrow\cG_T$ be a bijective root-preserving isometry.
We define the generalized dynamical pruning $\S(\varphi,\cG_T)$
for the graphical tree $\cG_T$ as the $f$-image of $\S(\varphi,T)$:
\[\S(\varphi,\cG_T) = f\left[\S(\varphi,T)\right].\]

Consider a natural isometry between the graphical shock tree
$\cG^{(x,t)}(\Psi_0)$ in the space-time domain and
tree $\cG^{(x,\psi)}(\Psi_0)$ in the phase space or 
the shock tree $S(\Psi_0)$. 
By construction (and using the natural isometry), the ballistic annihilation dynamics of 
the sinks corresponds to the continuous pruning (erasure at unit speed) of 
the shock  tree $S(\Psi_0)$, or its graphical representations, 
from the leaves.
Recall that this corresponds to selecting
$\varphi(T)=\textsc{height}(T)$ in the generalized dynamical pruning 
framework.

It is intuitive that the potential $\psi(x,t)$ at any given $t>0$ can be 
uniquely reconstructed from either of the pruned graphical trees 
$\cG^{(x,t)}(\Psi_0)$ and $\cG^{(x,\psi)}(\Psi_0)$.
Because of the strong symmetries (see Lemmas~\ref{lem:symmetry},
\ref{lem:SWT}d), 
the graphical trees possess significant redundant information.
We show below that the reduced tree $\cV(\Psi_0)$ equipped
with information about the sinks provides a minimal
description sufficient for reconstructing the entire continuum annihilation dynamics.

First, we expand the space $\L$ by equipping the trees with oriented 
massive sinks. 
Specifically, consider a tree $T\in\L$. 
A non-vertex point $z\in T$ can be equipped with an oriented mass
$m_{\rm L}>0$ (left) or $m_{\rm R}>0$ (right).
Every leaf vertex is necessarily equipped with either
a single (un-oriented) mass $m\ge0$, or
a pair of positive masses $(m_{\rm L},m_{\rm R})$.
Finally, we assume that that number of sinks within a tree
is finite.
The new space of mass-equipped trees is denoted by $\mL$.
Observe that any tree $T\in\L$ can be considered as an element 
of $\mL$ with no internal mass (no sinks attached to non-vertex points)
and zero mass at the leaves.

\medskip

\begin{con}[{{\bf Potential $\rightarrow$ tree}}]
\label{con1}
Suppose that $\psi(x,0)\in\widetilde{\E}$ and fix $t\in(0,t_{\rm max}]$. 
A mass-equipped tree $T_{\psi}(t)\in\mL$ for the potential $\psi(x,t)$ is 
constructed as follows:
\begin{itemize}
\item[(a)] The planar shape of the tree, as an element of $\L$, corresponds
to the level set tree of the potential restricted to the 
positive density domain: $\psi(x,t)|_{g(x,t)>0}$.
(This corresponds, for any given $t>0$, to cutting zero-density space intervals 
and glueing the potential segments from positive-density intervals to form a 
continuous function from $\widetilde{\E}$.)
\item[(b)] Every leaf that corresponds to a local minimum point of $\psi(x,t)$
is equipped with mass $m=2t$.
\item[(c)] Every leaf that corresponds to a local minimum plateau of length $\varepsilon$ in 
$\psi(x,t)$ is equipped with an arbitrary double mass $(m_{\rm L},m_{\rm R})$ that satisfies
\[m_{\rm L}+m_{\rm R}=\varepsilon+2t.\]
\item[(d)] Every internal point that corresponds to a plateau of length $\varepsilon$ that 
is not a local maximum is equipped with mass $m=v$. 
\end{itemize}
\end{con} 
\begin{rem}
The non-unique choices in item (c) reflect the memory loss of the ballistic annihilation dynamics.
\end{rem}

\begin{Def}
A tree $T\in\mL$ is called $t${\it-admissible} for a given $t\ge 0$ if and only if
its masses satisfy the following conditions:
\begin{itemize}
\item[(a1)] Any internal mass $m$ satisfies $m<2t$.
\item[(a2)] Any single mass $m$ at a leaf satisfies $m=2t$.
\item[(a3)] Any mass pair $(m_{\rm L},m_{\rm R})$ at a leaf satisfies 
$m_{\rm L}+m_{\rm R}>2t$.
\end{itemize}
\end{Def}

For any $t$-admissible tree $T\in\mL$ one can construct a unique corresponding  
potential $\psi_{T,t}(x)$.

\begin{con}[{{\bf Tree $\rightarrow$ potential}}]
\label{con2}
Suppose that $T\in\mL$ is a $t$-admissible tree for some $t>0$.
The corresponding potential $\psi_{T,t}(x)\in\widetilde{\E}$ is constructed in the following steps: 
\begin{itemize}
\item[(1)] Construct the Harris path $H_T(x)$ of $T$ as an element of $\L^{|}$.
\item[(2)] At every local minimum of $H_T(x)$ that corresponds to a double mass
$(m_{\rm L}, m_{\rm R})$, insert a horizontal plateau of length
\[\varepsilon = m_{\rm L}+m_{\rm R}-2t.\]
\item[(3)] At every monotone point of $H_T(x)$ that corresponds to an internal mass $m$, insert
a horizontal plateau of length $m$.
\item[(4)] At every local maxima, insert
a horizontal plateau of length $2t$.
\end{itemize}
\end{con}

We now extend the definition of the generalized dynamical pruning to
the mass-equipped trees.
\begin{Def}[{{\bf Cuts}}]
\label{def:cuts}
The set $\D(\varphi,T)$ of cuts in a pruned tree $\S(\varphi,T)$ is defined as
the boundary of the pruned part of the tree
\[\D(\varphi,T)=\partial\{x\in T~:~\varphi(\Delta_{x,T})<t\}.\]
\end{Def}
\noindent Accordingly, the set of tree cuts is a union of the leaves of the 
pruned tree and the vertices of the initial tree that became edge points in 
the pruned tree. 

Every cut point $z\in\S(\varphi,T)$ is equipped with either single or double mass.
If the root of the subtree $\Delta_{z,T}$ has degree 1, the point $z$
is equipped with mass $m_z$ equal to the value of the pruning function $\varphi$ 
at the subtree $\Delta_{z,T}$.
If the root of the subtree $\Delta_{z,T}$ has degree 2, 
the subtree $\Delta_{z,T}$ is comprised of two subtrees, $T_1$ and $T_2$,
that merge (and share the root) at $z$.
In this case, the point $z$
is equipped with mass(es) equal to the value(s) of $\varphi$
at every tree $T_i$ such that $\varphi(T_i)<t$.
We denote the mass-equipped generalized dynamical pruning by $\mS(\varphi,T)$. 

\medskip

The following theorem establishes the equivalence of the continuum annihilation dynamics and mass-equipped generalized dynamical pruning with respect 
to the tree length.

\begin{figure}[!h]
	\centering
	\includegraphics[width=0.7 \textwidth]{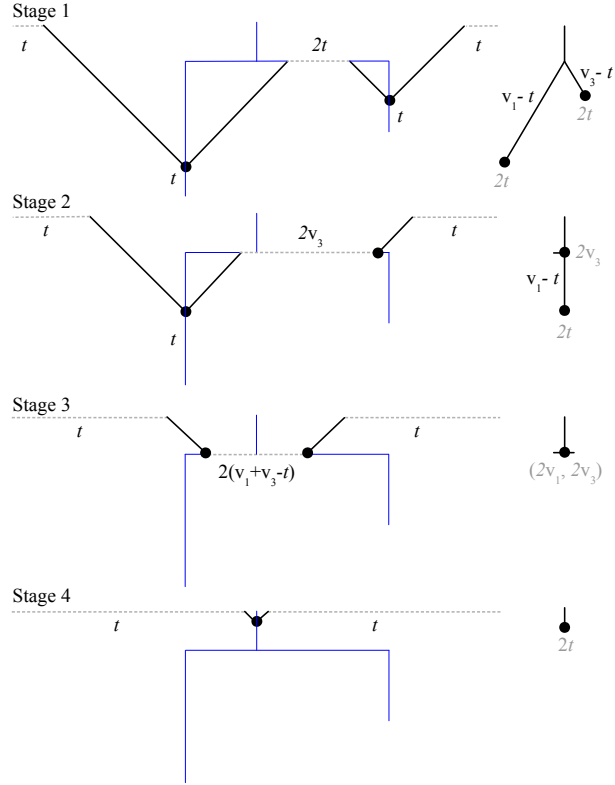}
	\caption{Four generic stages in the ballistic annihilation dynamics of a W-shaped potential (left), 
	and respective mass-equipped trees (right).
	(Left): Potential (solid black) $\psi(x,t)$.
	Each plateau (dashed gray) corresponds to an empty interval.
	The graphical shock tree $\cG^{(x,\phi)}(\Psi_0)$ (blue)
	and sinks (black circles) are shown for visual convenience.
	(Right): Mass-equipped trees. 
	Segment lengths are marked in black, point masses are indicated in gray.
	A tick next to the mass point in Stage 2 indicates its orientation (left).
	A double-tick next to the mass point in Stage 3 indicates two masses
	at the leaf.}
	\label{fig:Vtree}
\end{figure}

\begin{thm}[{{\bf Annihilation pruning}}]
\label{thm:pruning}
Suppose that $\psi(x,0)\in\widetilde{\E}$. 
For any fixed $t\ge 0$,
let $T_{\psi}(t)$ be the mass-equipped tree for the
potential $\psi(x,t)$.
Then the following statements hold: 
\begin{itemize}
\item[(a)] The tree $T_{\psi}(t)$ is $t$-admissible, and it uniquely 
reproduces the potential:
\[\psi(x,t) = \psi_{T_{\psi}(t),t}(x).\]
\item[(b)] The ballistic annihilation dynamics of the potential $\psi(x,t)$ corresponds
to a generalized mass-equipped dynamical pruning of the tree 
\[T_{\psi}(0)=\textsc{level}(\psi(x,0)),\]
considered as an element of $\mL$, with $\varphi(T)=\textsc{length}(T)$:
\[T_{\psi}(t) = \mS(\textsc{length},T_{\psi}(0)).\]
\end{itemize}
\end{thm}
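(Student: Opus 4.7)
The proof plan hinges on a key length--mass--time identity that I would establish first: for every vertex $v$ in $T_\psi(0) = \textsc{level}(\psi(x,0))$, the total length of the descendant subtree satisfies
\[
\textsc{length}(\Delta_{v,T_\psi(0)}) = \tfrac{1}{2}|\cB_v|,
\]
where $\cB_v$ is the basin associated with $v$. This is immediate from Lemma~\ref{lem:recip} and Theorem~\ref{thm:SWT}, since the restriction of $-\psi(x,0)$ to $\cB_v$ is, up to reparametrization, the Harris path of $\Delta_{v,T_\psi(0)}$, whose length equals $2\,\textsc{length}(\Delta_{v,T_\psi(0)}) = |\cB_v|$. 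By the space-time cone construction of Section~\ref{sec:solution}, this common value is exactly the time $t_v = |\cB_v|/2$ at which $\cB_v$ has fully collapsed into a single sink. Hence pruning $T_\psi(0)$ with $\varphi = \textsc{length}$ at threshold $t$ removes precisely those subtrees whose basins have already fully collapsed by time $t$.

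For part (a), I would verify the three admissibility conditions case by case using the mass--length accounting of Lemma~\ref{lem:SWT}(c),(d). A Construction~\ref{con1}(b) leaf encodes a sink that is still at rest on both sides, whose mass accumulated at rate $2$ from time $0$ equals $2t$, giving (a2). A Construction~\ref{con1}(c) leaf encodes two sinks that collided after both had started moving and created a joint empty gap of length $\varepsilon$; Lemma~\ref{lem:SWT}(d) equates each sink's horizontal travel time with half the mass of its sibling subtree, and summing these yields $m_{\rm L}+m_{\rm R} = \varepsilon + 2t > 2t$, giving (a3). A Construction~\ref{con1}(d) internal mass $m=\varepsilon$ records a sink currently in motion, which must have commenced its travel strictly before time $t$, hence $m<2t$, giving (a1). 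The identity $\psi(x,t) = \psi_{T_\psi(t),t}(x)$ then follows because Construction~\ref{con2} reinserts exactly the plateaus that Construction~\ref{con1} strips off and records as masses; the two constructions are mutually inverse on $t$-admissible trees once the Harris-path identification of Lemma~\ref{lem:recip} is applied.

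For part (b), set $\tilde T_t := \mS(\textsc{length}, T_\psi(0))$. The combinatorial shape of $\tilde T_t$ coincides with that of $T_\psi(t)$ by the key identity: the retained vertices correspond precisely to basins still supporting positive particle density at time $t$, which by Construction~\ref{con1}(a) are in bijection with the vertices of $T_\psi(t)$. To match masses, I would enumerate the cut types for $\mS$: a leaf cut on the interior of an edge, where $\textsc{length}(\Delta_{z,T_\psi(0)})$ has just crossed $t$, corresponds to a single mass assignment that after Harris-path normalization matches Construction~\ref{con1}(b); a leaf cut located at a degree-$3$ vertex of $T_\psi(0)$ with both sub-subtrees already below threshold receives a double mass whose components are $\textsc{length}(T_{1,2})$, and applying the key identity with $\varepsilon := |\cB_z|-2t$ recovers $m_{\rm L}+m_{\rm R} = \varepsilon+2t$ and the orientation assigned by the planar embedding, matching Construction~\ref{con1}(c); an on-edge mass located between a still-active junction and a pruned subtree matches the internal-plateau case of Construction~\ref{con1}(d).

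The main obstacle will be the bookkeeping at the transition instants where a sink switches from ``at rest'' to ``in motion'' and where double-mass orientations must be tracked consistently through the pruning. Because $\mS$ does not satisfy the semigroup property for $\varphi = \textsc{length}$ (Example~\ref{ex:L}), the claim cannot be reduced by induction on infinitesimal increments $t \to t+\delta t$; rather, for each fixed $t$ one must couple $\tilde T_t$ and $T_\psi(t)$ directly through the common graphical embeddings $\cG^{(x,\psi)}(\psi_0)$ and $\cG^{(x,t)}(\psi_0)$, reading off the sinks as cut points and extracting their masses from the vertical-length identity in Lemma~\ref{lem:SWT}(c). Once this coupling is in place, parts (a) and (b) close simultaneously.
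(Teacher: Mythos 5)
Your proposal is correct, but it takes a genuinely different route from the paper. The paper's proof is short and constructive: it verifies the four generic stages of the annihilation dynamics for a W-shaped potential by direct inspection against Constructions~\ref{con1} and~\ref{con2} (Fig.~\ref{fig:Vtree}), and then extends to a general potential by the same nested-basin/unfolding induction on the space-time cones $\cC_1\subset\cC_1\cup\cC_2\subset\cdots$ used in Section~\ref{sec:solution} to build the shock tree. You instead isolate a global identity --- $\textsc{length}(\Delta_{v,T_\psi(0)})=|\cB_v|/2=t_v$, the collapse time of the basin --- and then match the three cut types of $\mS$ (interior-of-edge leaf cut, degree-3 leaf cut with both subtrees gone, one-sided cut at a surviving junction) to the three mass cases of Construction~\ref{con1}, with admissibility read off from Lemma~\ref{lem:SWT}(c),(d). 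This is arguably more illuminating: it explains \emph{why} pruning by $\textsc{length}$ (rather than, say, by $\textsc{height}$ on the full tree $S(\Psi_0)$) is the right operation, via the symmetry $\v_1+\v_3=|\cB_2|/2$ of Lemma~\ref{lem:symmetry}, and it replaces the stage-by-stage verification with a single coupling valid for each fixed $t$; your remark that the lack of a semigroup property forbids induction in $t$ is exactly the reason the paper also argues at fixed $t$. The cost is heavier bookkeeping, and one detail you should pin down explicitly rather than bury in ``Harris-path normalization'': the annihilation mass of a sink is \emph{twice} the length of the corresponding pruned subtree (mass accumulates at rate $2$, while the edge lengths of $\cV(\Psi_0)$ are the potential increments $\v_i$), so the masses $2t$, $(2\v_1,2\v_3)$, $2\v_{\min}$ of Construction~\ref{con1} equal $2\varphi(\Delta_{z,T})$ for the cut subtrees, not $\varphi(\Delta_{z,T})$ as a literal reading of Definition~\ref{def:cuts} would give; this factor-of-two convention is left implicit in the paper as well, but your proof, being more explicit, must state it once and carry it consistently through cases (a1)--(a3).
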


\begin{proof}
We start by proving the statement for a W-shaped potential.
Consider a W-shaped potential $\Psi_0(x)$ with two local minima at $x_i$, $i=1,3$
and the local maximum at $x_2$ (see Fig.~\ref{fig:W}). 
We define (as before) $\v_i = \Psi_0(x_2)-\Psi_0(x_i)$ and
set $v_{\rm min} = \v_1\wedge\v_3$, $v_{\rm max} = \v_1\vee\v_3$. 
There exist four stages in the ballistic annihilation dynamics of a W-shaped potential,
illustrated in Fig.~\ref{fig:Vtree}.
\begin{itemize}

\item[{\bf 1:}] During the time interval $t\in (0,\v_{\rm min})$, there exist 
two sinks (at rest) located in the local minima of $\psi(x,t)$.
This corresponds to a Y-shaped tree $T_{\psi}(t)$ with no internal sinks
and mass $m=2t$ at each of the leaves.

\item[{\bf 2:}] During the time interval $t\in [\v_{\rm min},\v_{\rm max})$, 
there exist two sinks. 
One is at rest and is located in the (only) local minima of $\psi(x,t)$;
and the other moves and is located next to a plateau. 
This corresponds to an I-shaped tree $T_{\psi}(t)$ with an internal 
mass $m_{\rm R}=2\v_{\rm min}$
placed at distance $\v_{\rm max}-t$ from the leaf and mass $m=2t$ at the leaf.

\item[{\bf 3:}] During the time interval $t\in [\v_{\rm max},\v_1+\v_3)$, 
there exist two sinks in motion.
They both are located at the boundaries of the plateau.
This corresponds to an I-shaped tree $T_{\psi}(t)$ with no internal mass
and a double mass $(2\v_1,2\v_3)$ at the leaf.

\item[{\bf 4:}] During the time interval $t\in [\v_1+\v_3,t_{\rm max})$, 
there exists a single sink at rest.
It is located at the local minimum of the potential.
This corresponds to an I-shaped tree $T_{\psi}(t)$ with no internal mass
and a single mass $m=2t$ at the leaf.
\end{itemize}

The statements (a) and (b) for a W-shaped potential are verified by direct 
observation in each of the four stages, and using Constructions~\ref{con1},\ref{con2}. 

The general case is now treated by considering the nested W-shaped potentials via the unfolding procedure,
as in the sequential cone-based construction of the solution to the continuum annihilation dynamics in Section~\ref{sec:solution}.
\end{proof}

%%%%%%%%%%%%%%%%%%%%%%%%%%%%%%%%%%%%%%%%%%%%%%%%%%%%%%%%%%%%%%%%%%%%%%%%%%
\section{Real tree description of ballistic annihilation}
\label{sec:Rtree}
The main object of this study is a finite tree of sinks  (shock tree) produced by the
ballistic annihilation model, and its dynamical pruning. 
This section suggests a natural description of the model in terms of {\it real trees}
introduced in Sect.~\ref{sec:Rsetup}. 
A {\it real tree} that completely describes the model dynamics and is tightly connected 
to the shock tree is constructed in Sect.~\ref{sec:Rfull}.
Section Sect.~\ref{sec:Rdomain} introduces two complementary metric space representations of the system's domain $[a,b]$
that summarize the essential features of ballistic coalescence.
Section Sect.~\ref{sec:Rprune} discusses a natural approach to introducing 
prunings on $\mathbb{R}$-trees.

\begin{figure}[h]
	\centering
	\includegraphics[width=0.7 \textwidth]{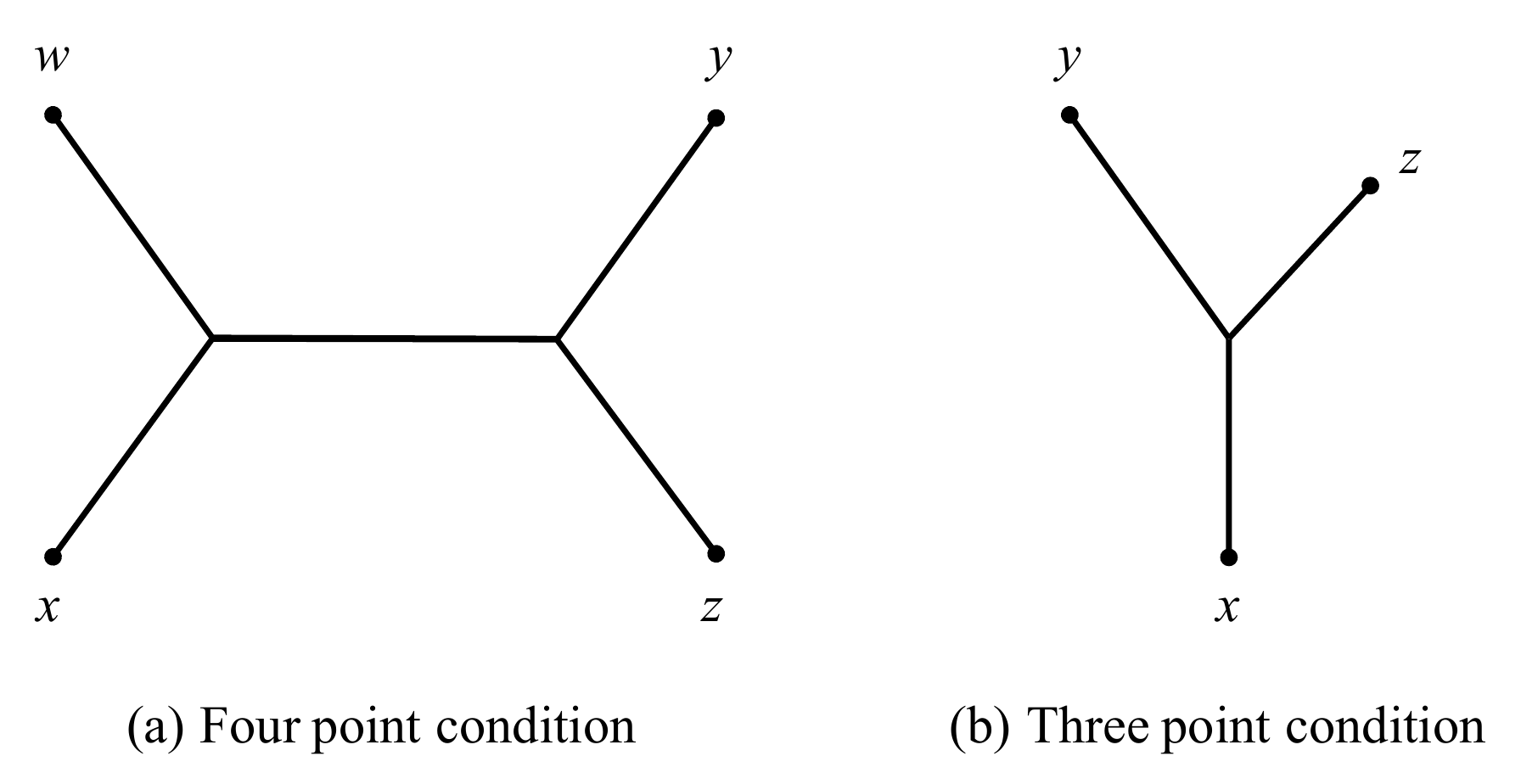}
	\caption{Equivalent conditions for $0$-hyperbolicity of a metric space $(X,d)$.
	(a) Four point condition: any quadruple $w,x,y,z\in X$ is geodesically connected as shown in the figure. 
	This configuration is algebraically expressed in Eq.~\eqref{four}.
	(b) Three point condition: any triplet $x,y,z\in X$ is geodesically connected as shown in the figure. }
	\label{fig:hyp}
\end{figure}

\subsection{Real trees}
\label{sec:Rsetup}
Recall that a metric space $(X,d)$ is called $0$-hyperbolic, if any quadruple
$w,x,y,z\in X$ satisfies the following {\it four point condition} \cite[Lemma 3.12]{Evans2005}:
\be
\label{four}
d(w,x)+d(y,z)\le\max\{d(w,y)+d(x,z),d(x,y)+d(w,z)\}.
\ee
The four point condition is an algebraic description of an intuitive geometric constraint
on geodesic connectivity of quadruples that is shown in Fig.~\ref{fig:hyp}(a).
An equivalent way to define $0$-hyperbolicity is the three point condition
illustrated in Fig.~\ref{fig:hyp}(b).
It is readily seen that the four point condition is satisfied by any finite tree with 
edge lengths (considered as a metric space with segment lengths induced by the edge lengths).
In general, a connected and $0$-hyperbolic metric space is called a {\it real tree}, 
or $\mathbb{R}$-tree \cite[Theorem 3.40]{Evans2005}.
We denote a real tree by $(T,d)$, referring to the underlying space $T$ and metric $d$, 
respectively.
A real tree $(T,d)$ is {\it geodesically linear}, which means that for any two points $x,y\in T$ 
there exists a unique segment (an isometry image) within $T$ with endpoints $\{x,y\}$
\cite[Definition 3.2]{Evans2005}.
We denote this segment by $[x,y]\subset T$. 
A real tree is called {\it rooted} if one of its points, denoted here by $\rho_T$, is selected as the
tree root.
Similarly to the case of finite trees, we say that a point $p\in T$ is an {\it ancestor} of
point $q\in T$ if the segment with endpoints $q$ and $\rho$ includes $p$: 
$p\in [p,\rho]\subset T$.
In this case, the point $q$ is called a {\it descendant} of point $p$. 
We denote by $\Delta_{p,T}$ the descendant tree at point $p$, that is the set 
of all descendants of point $p\in T$, including $p$ as the tree root.
The set of all descendant leaves of point $p$ is denoted by $\Delta^{\circ}_{p,T}$.

\subsection{$\mathbb{R}$-tree representation of ballistic annihilation}
\label{sec:Rfull}
We construct here a real tree 
representation of the continuum ballistic annihilation model.
As before, we assume a unit particle density 
$g(x)\equiv 1$ and initial potential $\Psi_0(x)\equiv \psi(x,0)\in\widetilde{\cE^{\rm ex}}$,
i.e. $\Psi_0(x)$ is a unit slope negative excursion on a finite interval $[a,b]$, as illustrated
in the bottom panel of Fig.~\ref{fig:shock_tree}. 
This model was discussed in Sect.~\ref{sec:solution}.
Recall that the interval $[a,b]$ completely annihilates by time $t_{\rm max}=(b-a)/2$,
producing a single sink at space-time location $((b+a)/2,t_{\rm max})$.

Consider the model's entire space-time domain
$\mathbb{T}=\mathbb{T}(\Psi_0)$
that consists of all points of the form $(x,t)$, $x\in[a,b]$, $0\le t\le t_{\rm max}$, 
such that there exists either a particle or a sink at location $x$ at time instant $t$. 
The shaded (hatched) regions in the top panels of Figs.~\ref{fig:shock_tree},\ref{fig:V},
\ref{fig:W} are examples of such sets of points.
For any pair of points $(x,t)$ and $(y,s)$ in $\mathbb{T}$, we define their unique
{\it earliest common ancestor} as a point 
\[{\sf A}_{\mathbb{T}}((x,t),(y,s)) = (z,w)\in\mathbb{T}\]
such that
%\[w=\inf\limits_{(z',w')\,:\, 
%\{(x,t),(y,s)\}\in{\sf D}_{\mathbb{T}}(z',w')} w'.\]
$w$ is the infimum over all $w'$ such that 
\[\exists\,z'\,:\, 
\{(x,t),(y,s)\}\in\Delta_{(z',w'),\mathbb{T}}.\]
The length of the unique segment between the points $(x,t)$ and $(y,s)$ is defined as 
\be
\label{T_len}
d\left((x,t),(y,s)\right)=\frac{1}{2}\left((w-t)+(w-s)\right)=\frac{1}{2}(2w-s-t),
\ee
where $w$ is the time component of
$(z,w)={\sf A}_{\mathbb{T}}((x,t),(y,s))$.

The tree $(\mathbb{T},d)$ for a simple initial potential is illustrated in 
the top panel of Fig.~\ref{fig:shock_tree} by gray lines.
The tree has a relatively simple structure.
There is a one-to-one
correspondence between the initial particles $(x,0)$, $x\in[a,b]$, and the leaf vertices
of $\mathbb{T}$.
There is a one-to-one correspondence between the ballistic runs of the initial particles
(runs before collision and annihilation) and the leaf edges of $\mathbb{T}$.
Four of such runs are shown by green arrows in Fig.~\ref{fig:Rtree}.
There is one-to-one correspondence between the sink points $(\sigma(t),t)$ and the non-leaf part
of $\mathbb{T}$.
In particular, the tree root corresponds to the final sink $((a+b)/2,t_{\rm max})$.
The sink points are shown by blue line in Fig.~\ref{fig:shock_tree}. 
It is now straightforward to check that the tree $(\mathbb{T},d)$ satisfies the
four point condition.

Consider again the sink subspace of $\mathbb{T}$, which consists of the points 
$\{\sigma(t),t)\}$ such that there exists a sink at location $\sigma(t)$ at
time instant $t$, equipped with the distance \eqref{T_len}.
This metric subspace is also a tree, as a connected subspace of 
an $\mathbb{R}$-tree \cite{Evans2005}.
This tree is isometric to 
the shock wave tree $S(\Psi_0)$ and hence to either of its graphical 
representations $\cG^{(x,t)}(\Psi_0)$ or 
$\cG^{(x,\psi)}(\Psi_0)$ that are
illustrated in Fig.~\ref{fig:shock_tree} (top and bottom panels, respectively).

From the above construction, it follows that all leaves $(x,0)$ are located at the
same depth (distance from the root) $t_{\rm max}$.
To see this, consider the segment that connect a leaf and the root and apply \eqref{T_len}.
Moreover, each {\it time section} at a fixed instant $t_0$, 
${\sf sec}(\mathbb{T},t_0)=\{(x,t_0)\in\mathbb{T}\}$,
is located at the same depth $(t_{\max}-t_0)$. 
This implies, in particular, that for any fixed $t_0\ge 0$, the metric induced 
by $\mathbb{T}$ on ${\sf sec}(\mathbb{T},t_0)$ is an {\it ultrametric}, which means 
that $d_1(p,q)\le d_1(p,r)\vee d_1(r,q)$ for 
any triplet of points $p,q,r\in{\sf sec}(\mathbb{T},t_0)$.
Accordingly, each triangle $p,q,r\in{\sf sec}(\mathbb{T},t_0)$
is an {\it isosceles}, meaning that at least two of the three pairwise distances 
between $p,q$ and $r$ are equal and not greater than the third \cite[Definition 3.31]{Evans2005}.
The length definition \eqref{T_len} implies that the distance between
any pair of points from any fixed section ${\sf sec}(\mathbb{T},t_0)$ equals
the time until the two points (each of which can be either a particle or a sink) 
collide.

We notice that the collection of leaf vertices $\Delta^{\circ}_{p,\mathbb{T}}$ descendant 
to a point $p\in\mathbb{T}$ can be either a single point $(x_p,0)$, if $p$ is within
a leaf edge and represents the ballistic run of a particle, or 
an interval $\{(x,0): x_{\rm left}(p) \le x \le x_{\rm right}(p)\}$,
if $p$ is a non-leaf point that represents a sink.
We define the {\it mass} $m(p)$ of a point $p\in\mathbb{T}$ as
\[m(p)=\int\limits_{x:(x,0)\in\Delta^{\circ}_{p,\mathbb{T}}}g(z)dz 
= x_{\rm right}(p)-x_{\rm left}(p),\]
where the last equality reflects the assumption $g(z)\equiv 1$.
The mass $m(p)$ generalizes the quantity ``number of descendant leaves'' 
(see Example~\ref{ex:numL}) to the $\mathbb{R}$-tree situation
with an uncountable set of leaves.
We observe that
(i) a point $p\in\mathbb{T}$ represents a ballistic run if and only if $m(p)=0$;
(ii) a point $p\in\mathbb{T}$ represents a sink if and only if $m(p)>0$. 
This means that the shock wave tree, which is isometric to
the sink part of the tree $(\mathbb{T},d)$, can be extracted from  
$(\mathbb{T},d)$ by the condition $\{p:m(p)>0\}$.

%A useful alternative metric on $\mathbb{T}$ is given in terms
%of point masses:
%\[d_2(p,q) = 2m\left({\sf A}_{\mathbb{T}}(p,q)\right)-m(p)-m(q)\quad \forall p,q\in\mathbb{T}.\]
%The metric $d_2$ assigns zero length to the leaf edges of $\mathbb{T}$ and to
%the internal parts of $\mathbb{T}$ that correspond to the sink movements 
%without mass accumulation; these elements are shown by tilted gray and blue lines in the top panel
%of Fig.~\ref{fig:shock_tree}.
%A non-zero length is assigned to sinks that accumulate mass with no motion; these 
%tree segments are shown by blue vertical lines in the top panel of Fig.~\ref{fig:shock_tree}.
%The tree $(\mathbb{T},d_2)$ is isometric
%to the (finite) level set tree of the potential $\Psi_0(x)$ on $[a,b]$
%and hence to the shock wave tree $\cV(\Psi_0)$ (by Theorem~\ref{thm:SWT}).
%Moreover, for any $p\in\mathbb{T}$, $m(p)=\textsc{length}(\Delta_{p,(\mathbb{T},d_2)})$.
%This means that for any fixed $t>0$, 
%the pruning $\cS_t(\varphi,(\mathbb{T},d_2))$ of the $\mathbb{R}$-tree $(\mathbb{T},d_2)$ 
%with pruning function $\varphi(T)=m(\rho_T)$ is isometric 
%to the pruning $\cS_t(\varphi,S(\Psi_0))$ of the (finite) shock wave tree $S(\Psi_0)$ 
%with pruning function $\varphi(T)=\textsc{length}(T)$.

\subsection{Metric spaces on the set of initial particles}
\label{sec:Rdomain}
In this section we discuss two metrics on the system's domain $[a,b]$,
which is isometric to the set $\{(x,0): x\in[a,b]\}$, of initial particles.
These spaces contain the key information about 
the system dynamics and, unlike the complete tree $(\mathbb{T},d)$,  
can be readily constructed from the potential $\Psi_0(x)$.
 
Metric $h_1(x,y)$ reproduces the ultrametric induced by 
$(\mathbb{T},d)$ on $[a,b]$.
Below we explicitly connect this metric to $\Psi_0(x)$. 
For any pair of points $x,y\in[a,b]$ 
we define a basin ${\sf B}_{\Psi_0}(x,y)$ as the interval that supports the 
minimal negative excursion within $\Psi_0(x)$ 
that contains the points $x,y$.
Formally, assuming (without loss of generality) that $x<y$ we find the maximum 
of $\Psi_0$ on $[x,y]$:
\[{\sf m}_{\Psi_0}(x,y) = \sup\limits_{z\in[x,y]}\Psi_0(z)\]
and use it to define the basin
\[{\sf B}_{\Psi_0}(x,y) = \left[\sup\limits_{z\le x, \Psi_0(z)\ge {\sf m}_{\Psi_0}(x,y)} z,
\quad\inf\limits_{z\ge y, \Psi_0(z)\ge {\sf m}_{\Psi_0}(x,y)} z\right].\]
The metric is now defined as
\[h_1(x,y)=\frac{1}{2}|{\sf B}_{\Psi_0(x,y)}|.\]
It is straightforward to check that 
\[h_1(x,y) = \text{the time until collision of the particles } (x,0) \text{ and } (y,0),\]
where the collision is understood as either collision of particles, collision of
sinks that annihilated the particles, or collision between a sink that annihilated
one of the particles 
and the other particle.
For instance, the claim is readily verified, by examining the bottom panel of Fig.~\ref{fig:Rtree},
for any pair of points from the set $\{x,x',y,y'\}$.
The metric space $([a,b],h_1)$ is not a tree.
Moreover, this space is {\it totally disconnected}, since there only exists a finite number
of points (local minima of $\Psi_0(x)$) that have a neighborhood of arbitrarily small size.
Any other point at the Euclidean distance $\epsilon$ from the nearest local minimum is separated
from other points by at least $\epsilon/2$.

Metric $h_2(x,y)$ describes the mass accumulation by sinks during the annihilation process.
Specifically, we introduce an equivalence relation among the annihilating
particles, by writing $x\sim_{\Psi_0}y$ 
if the particles with initial coordinates $x$ and $y$ collide and annihilate with each other.
For example, in Fig.~\ref{fig:Rtree} we have $x \sim_{\Psi_0} x'$ 
and $y \sim_{\Psi_0}y'$.
The following metric is now defined on the quotient space $[a,b]|_{\sim_{\Psi_0}}$:
\[h_2(x,y) = 2\sup\limits_{z\in[x,y]}\left[\Psi_0(z)\right]-\Psi_0(x)-\Psi_0(y).\]
In words, the distance $h_2(x,y)$ between particles $x$ and $y$ equals to the 
total mass accumulated by the sinks to which the particles belong during the
time intervals between the instants when the particles joined the respective
sinks and the instant of particle (or respective sink) collision.
Another interpretation is that $h_2(x,y)$ equals to the minimal Euclidean 
distance between points $x,y\in[a,b]|_{\sim_{\Psi_0}}$ in the quotient space; one can travel in this quotient space
as along a regular real interval, with a possibility to jump (with no distance accumulation)
between equivalent points.  
This $\mathbb{R}$-tree construction is know as the {\it tree in continuous path}
\cite[Definition 7.6]{Pitman},\cite[Example 3.14]{Evans2005}.

The metric space $([a,b]|_{\sim_{\Psi_0}},h_2)$ is a tree that is 
isometric to the level set tree of the potential $\Psi_0(x)$ on $[a,b]$
and hence to the (finite) shock wave tree $\cV(\Psi_0)$ (by Theorem~\ref{thm:SWT}), with 
the convention that the root is placed 
in $a\sim_{\Psi_0} b$.
This means, in particular, that prunings of these two trees,  with the same pruning function
and pruning time, coincide.

\subsection{Other prunings on $\mathbb{T}$}
\label{sec:Rprune}
One can introduce a large class of prunings on an $\mathbb{R}$-tree $(\mathbb{T},d)$ 
following the approach used above to define the point mass $m(p)$.
Specifically, consider a measure $\eta(\cdot)$ on $[a,b]$
and define $m_{\eta}(p) = \eta(\Delta^{\circ}_{p,\mathbb{T}})$.
The function $m_{\eta}(p)$ is non-decreasing along each segment
that connect a leaf and the root $\rho_{\mathbb{T}}$ of $\mathbb{T}$.
Hence, one can define a pruning with respect to $m_{\eta}$ on $\mathbb{T}$
by cutting all points $p$ with $m_{\eta}(p)<t$
for a given $t\ge 0$.
Clearly, the function $m_{\eta}(p)$ typically has discontinuities along a
path between a leaf and the root of $\mathbb{T}$.
This means that pruning with respect to $m_{\eta}$ typically does not
have the semigroup property.

%%%%%%%%%%%%%%%%%%%%%%%%%%%%%%%%%%%%%%%%%%%%%%%%%%%%%%%%%%%%%%%%%%%%%%%%%%%%
\section{Ballistic annihilation of an exponential excursion}
\label{sec:BDEE}

This section analyses a special case of a piece-wise
linear potential with unit slopes: a negative exponential excursion.
Consider potential $\psi(x,0)=-H_{{\sf GW}(\lambda)}(x)$ 
that is the negative Harris path of an exponential critical binary 
Galton-Watson tree with parameter $\lambda$.
In words, the potential is a negative finite excursion with linear segments of
alternating slopes $\pm 1$, such that the lengths of all segments except
the last one are i.i.d. exponential random variables with parameter $\lambda/2$.
This means that the initial particle velocity $v(x,0)$ 
alternates between the values $\pm1$ at epochs of a stationary 
Poisson point process on $\mathbb{R}$ with rate $\lambda/2$, starting with $+1$ and until the 
respective potential crosses the zero level.

\begin{cor}[{{\bf Exponential excursion}}]
\label{cor:GW}
Let potential $\Psi_0(x)\in\widetilde{\E}$
be a negative exponential excursion,
$\Psi_0(x)=-H_{{\sf GW}(\lambda)}(x)$. 
Then the corresponding shock tree $\cV(\Psi_0)\in\L^{|}$
is an exponential binary critical Galton-Watson tree 
${\sf GW}(\lambda)$.
\end{cor}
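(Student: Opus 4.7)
The plan is to derive the corollary as a direct consequence of two results already established in the paper: the identification of the shock tree with a level set tree (Theorem~\ref{thm:SWT}) and the reciprocity between Harris paths and level set trees (Lemma~\ref{lem:recip}). In particular, no new probabilistic computation should be needed; the content of the statement is already encoded in the deterministic correspondences of Section~\ref{sec:GWT} and Section~\ref{sec:solution}.

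First, I would verify that the corollary's hypothesis places us in the setting where Theorem~\ref{thm:SWT} applies. Because $H_{{\sf GW}(\lambda)}$ is, with probability one, a piecewise linear positive excursion on a finite interval with alternating slopes $\pm 1$ and atomless (exponential) segment lengths, we have $-H_{{\sf GW}(\lambda)} \in \widetilde{\E}$ almost surely. Thus $\Psi_0 \in \widetilde{\E}$ and Theorem~\ref{thm:SWT} gives
\[
\cV(\Psi_0) \;=\; \textsc{level}\left(-\Psi_0\right).
\]

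Second, I would substitute $-\Psi_0 = H_{{\sf GW}(\lambda)}$ and apply the reciprocity statement of Lemma~\ref{lem:recip}, which asserts that $\textsc{level}(H_T) \equiv T$ for every $T \in \L^{|}$. Since ${\sf GW}(\lambda) \in \L^{|}$ almost surely, this yields
\[
\cV(\Psi_0) \;=\; \textsc{level}\left(H_{{\sf GW}(\lambda)}\right) \;=\; {\sf GW}(\lambda),
\]
which is precisely the claim of the corollary. The argument is essentially a one-line chain of identifications once the two earlier theorems are in hand.

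The only subtle point, and the one I would be most careful about, is making sure that the measure-theoretic conventions match: Theorem~\ref{thm:SWT} is phrased for deterministic potentials in $\widetilde{\E}$, while the corollary is a distributional statement about a random potential. The obstruction, if any, is purely formal: one must check that the map $\Psi_0 \mapsto \cV(\Psi_0)$ is measurable with respect to the natural $\sigma$-algebras on $\widetilde{\E}$ and $\L^{|}$, so that the almost-sure identity $\cV(\Psi_0) = \textsc{level}(-\Psi_0)$ transfers to an equality in distribution. Given the explicit finite combinatorial construction of $\cV$ and $\textsc{level}$ from the sequence of local extrema of $\Psi_0$, this measurability is routine, so I do not expect any genuine difficulty.
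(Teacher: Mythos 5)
Your argument is correct, and it differs from the paper's proof in the second step. Both proofs begin identically, invoking Theorem~\ref{thm:SWT} to identify $\cV(\Psi_0)$ with $\textsc{level}(-\Psi_0)$. The paper then finishes by appealing to Theorem~\ref{Pit7_3}, the distributional characterization stating that the level set tree of a random excursion is ${\sf GW}(\lambda)$ if and only if its rises and falls (excluding the last fall) are i.i.d. exponential with parameter $\lambda/2$; this applies because $-\Psi_0 = H_{{\sf GW}(\lambda)}$ is exactly such an excursion. You instead finish with the deterministic reciprocity of Lemma~\ref{lem:recip}, $\textsc{level}(H_T)\equiv T$, applied pathwise to $T={\sf GW}(\lambda)$. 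Your route is arguably the more economical one here and yields a slightly stronger conclusion: it shows that $\cV(\Psi_0)$ is pathwise isometric to the very tree $T$ whose Harris path generates the potential, not merely equal to ${\sf GW}(\lambda)$ in distribution. The paper's route, by contrast, only needs the distributional properties of the rises and falls of the potential and so would survive verbatim if the potential were specified directly as an excursion of a symmetric exponential random walk rather than as the Harris path of a prescribed tree. Your remark on measurability is a fair formal point that both proofs leave implicit; as you note, it is routine given that both maps are determined by the finite sequence of local extrema.
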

\begin{proof}
By Theorem~\ref{thm:SWT}, the shock tree $\cV(\Psi_0)$ is the level 
set tree of the negative potential $-\Psi_0(x)$.
The statement now follows from Theorem~\ref{Pit7_3}.
\end{proof}

By Theorem~\ref{thm:pruning} the ballistic annihilation dynamics of an initial 
potential $\psi(x,0)$ is equivalent to a generalized dynamical pruning of the 
respective mass-equipped tree $T_{\psi}(0)$ with pruning function $\varphi(T)=\textsc{length}(T)$.
Here we give a complete description of the ballistic annihilation dynamics for a mass-equipped exponential 
critical binary Galton-Watson tree $T_{\psi}(t)\in\mL$. 

Recall that if $T={\sf GW}(\lambda)$ and $\varphi(T) = \textsc{length}(T)$, then by (\ref{eq:pDelta1}),
$$p_t:={\sf P}(\varphi(T)>t)=e^{-\lambda t}\Big[ I_0(\lambda t)+ I_1(\lambda t) \Big].$$

\begin{thm}[{{\bf Ballistic annihilation dynamics of an exponential excursion}}] 
\label{thm:annihilation}
Suppose the initial potential $\psi(x,0)$ is the negative Harris path 
of an exponential critical binary Galton-Watson tree 
with parameter $\lambda$, $T=T_\psi(0)\stackrel{d}{=}{\sf GW}(\lambda)$.
Then, at any instant $t>0$ the mass-equipped shock tree 
$T_{\psi}(t) =  \mS(\textsc{length},T_\psi(0))$ conditioned on surviving,
$T_{\psi}(t) \not= \phi$, is distributed according to the 
following rules.
\begin{enumerate}
  \item[(i)] $T_{\psi}(t)\stackrel{d}{=}{\sf GW}(\lambda_t)$ with
  $\lambda_t:=\lambda p_t$.
  
  \item[(ii)] A single or double mass points are placed independently in each leaf 
  with the probability of a single mass being
  \[{2 \over \lambda}{\ell(t) \over p_t^2}.\]
  %$${2 \over \lambda}{\ell(t) \over p_t^2}={2e^{\lambda t} \over \lambda t}{I_1 \big(\lambda t\big) %\over \Big( I_0(\lambda t)+ I_1(\lambda t) \Big)^2}~.$$

 \item[(iii)] Each single mass at a leaf has mass $m=2t$. 
 For a double mass point, the masses $(m_{\rm L},m_{\rm R})$ has the following joint p.d.f.
  $$f(a,b)={{1\over 4}\ell({a\over2})\ell({b\over2}) \over p_t^2 -{2  \over \lambda}\ell(t)}$$
%   where $p(t)=e^{-\lambda t}\Big[ I_0(\lambda t)+ I_1(\lambda t) \Big]$, 
% and $f(a)={1 \over 2}\ell(a/2)$ as in (\ref{eq:pdfs}).
  for $a,b>0$, $ a\vee b ~\leq 2t ~< a+b$.
  
  \item[(iv)] The number of mass points placed in the interior of any edge 
  is distributed geometrically with the probability of placing $k$ masses being
  $$p_t\big(1-p_t\big)^k, \qquad k=0,1,2,\hdots.$$
  The locations of $k$ mass points are independent uniform in the interior of the edge. 
  The orientation of each mass is left or right independently with probability $1/2$.
  
  \item[(v)]The edge masses are i.i.d. random variables with the following common p.d.f.
  $${\ell({a\over 2}) \over 2(1-p_t)}, \qquad a \in (0,2t).$$
  %\item[(iv)]
\end{enumerate}
\end{thm}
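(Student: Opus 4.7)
The plan is to combine Theorem~\ref{thm:pruning} (ballistic annihilation equals the mass-equipped pruning $\mS(\textsc{length},\cdot)$) with Theorem~\ref{main} (invariance of ${\sf GW}(\lambda)$ under any admissible pruning), and then unwind the recursive structure of ${\sf GW}(\lambda)$ to identify the marks. Part~(i) follows at once: since $T_\psi(t)=\mS(\textsc{length},T_\psi(0))$ and $T_\psi(0)\stackrel{d}{=}{\sf GW}(\lambda)$, Theorem~\ref{main} yields the shape and edge-length law of ${\sf GW}(\lambda p_t)$ upon survival, with $p_t$ supplied by Theorem~\ref{pdelta}(a).

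For parts~(iv) and (v), I would follow a single edge of $T_\psi(t)$ as a path inside $T$. At every junction of $T$ encountered along the path, the sibling subtree (the one not continuing the path) is an independent ${\sf GW}(\lambda)$; with probability $1-p_t$ its total length is $<t$, so it is pruned and contributes an interior single mass, and with probability $p_t$ it survives and terminates the edge at a junction of $T_\psi(t)$. This immediately yields a geometric count $\Pr(K=k)=p_t(1-p_t)^k$. Conditional on being pruned, the sibling has tree-length density $\ell(l)/(1-p_t)$ on $(0,t)$; converting to the corresponding plateau mass $a=2l$ gives the density $\ell(a/2)/(2(1-p_t))$ on $(0,2t)$. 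The uniform location of the $K$ interior masses is the familiar order-statistic property when $K+1$ i.i.d.\ $\text{Exp}(\lambda)$ increments are conditioned to sum to an $\text{Exp}(\lambda p_t)$ random variable, and the independent left/right orientation reflects the symmetric planar embedding of ${\sf GW}(\lambda)$.

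For parts~(ii) and (iii), I would analyze the descendent vertex of the root of $T_\psi(t)$ in the style of the proof of Theorem~\ref{main}, classifying it as a single-mass leaf, a double-mass leaf, or a junction. A single-mass leaf arises either from a stem cut when the descendent of the root in $T$ is pruned (progenitor a leaf of $T$ with stem $X>t$, or a junction with $L_1+L_2<t$ but $X+L_1+L_2>t$), or recursively through a junction of $T$ with exactly one surviving subtree. Writing $\sigma_t$ for the conditional probability that the descendent is a single-mass leaf, the recursion reads
\[
p_t\,\sigma_t=\tfrac{1}{2}\,e^{-\lambda t}+p_t(1-p_t)\,\sigma_t+\tfrac{1}{2\lambda}\bigl(\phi_\lambda\ast\ell\ast\ell\bigr)(t).
\]
The convolution identity $\phi_\lambda\ast\ell\ast\ell=2\ell-\phi_\lambda$, which is just~\eqref{recursionEll} rewritten, reduces the last term to $\ell(t)/\lambda-\tfrac{1}{2}e^{-\lambda t}$, so $\sigma_t=\ell(t)/(\lambda p_t^2)$. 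Since by Theorem~\ref{main} the descendent is a leaf with conditional probability exactly $1/2$, the probability of single mass \emph{at a leaf} is $2\sigma_t=2\ell(t)/(\lambda p_t^2)$. For part~(iii), conditional on a double-mass leaf, the pruned sibling lengths $(L_1,L_2)$ have joint density $\ell(L_1)\ell(L_2)$ on $\{L_1,L_2<t,\ L_1+L_2\ge t\}$; the change of variables $(m_L,m_R)=(2L_1,2L_2)$ contributes the factor $1/4$, and the normalizing constant is
\[
\Pr(L_1,L_2<t,\ L_1+L_2\ge t)=(1-p_t)^2-\Pr(L_1+L_2\le t)=p_t^2-\tfrac{2}{\lambda}\ell(t),
\]
where $\Pr(L_1+L_2\le t)=1-2p_t+\tfrac{2}{\lambda}\ell(t)$ is read off the Laplace transform $\hat\ell^{\,2}/s$ using the same recursive identity.

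Independence of the masses across distinct leaves and edges is inherited from the independence of disjoint subtrees in ${\sf GW}(\lambda)$ together with the Galton-Watson structure of $T_\psi(t)$ furnished by Theorem~\ref{main}. The main obstacle is the bookkeeping in the single-mass recursion: one must correctly compose the factor of $2$ coming from conditioning on the leaf case (Theorem~\ref{main}) with the factor of $2$ that translates a tree length into a potential mass, and must recognize that the convolution $\phi_\lambda\ast\ell\ast\ell$ arising in the stem-cut integral collapses via~\eqref{recursionEll} — without this collapse the explicit form of $\sigma_t$ (and hence of the normalizers in~(ii) and~(iii)) would not be visible.
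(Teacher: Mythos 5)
Your proposal is correct and follows essentially the same route as the paper: both rest on the same case analysis at the descendent vertex of the root (leaf with long stem, junction with zero/one/two surviving subtrees, and the sub-case $\h_1\vee\h_2\le t<\h_1+\h_2$ for double masses), the same key identities $\phi_\lambda\ast\ell\ast\ell=2\ell-\phi_\lambda$ and $F_{\h_1+\h_2}(t)=1-2p_t+\tfrac{2}{\lambda}\ell(t)$, and the same factor-of-two conversions. The only cosmetic difference is that you obtain part (ii) from an explicit recursion for $\sigma_t$, whereas the paper reads the same answer off as the ratio of case probabilities $\tfrac{1}{\lambda}\ell(t)\big/\tfrac{p_t^2}{2}$ (which implicitly relies on the same recursive self-similarity you spell out).
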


\begin{proof}
Part (i) follows directly from Theorem \ref{pdelta}(a).
\medskip

To establish the other parts, we first introduce a particular
representation of the survival event $\S(\varphi,T) \not= \phi$. 
Let $X$ denote the length of the edge of $T$ adjacent to the root and let $x$ be the  descendent vertex (a junction or a leaf) to the root in $T$. 
If ${\sf deg}_T(x)=3$, let $\h_1$ and $\h_2$ represent the lengths of the two subtrees descendent from $x$. 
Then the event 
$\S(\varphi,T) \not= \phi$ can be written as the union of the following five non-overlapping events, illustrated in Fig.~\ref{fig:thm8},
\begin{align}\label{5events}
\Big\{\S(\varphi,T) \not= \phi\Big\} = & \{{\sf deg}_T(x)=3 ~\text{ and }~t\leq~\h_1 \wedge \h_2\} \nonumber \\
& \cup \{{\sf deg}_T(x)=3 ~\text{ and }~\h_1 \wedge \h_2 \leq ~t<~ \h_1 \vee \h_2\} \nonumber \\
& \cup \{{\sf deg}_T(x)=3 ~\text{ and }~ \h_1 \vee \h_2 \leq ~t<~ \h_1+\h_2\} \nonumber \\
& \cup \{{\sf deg}_T(x)=3 ~\text{ and }~ \h_1 +\h_2 \leq ~t<~ X+\h_1+\h_2\} \nonumber \\
& \cup \{{\sf deg}_T(x)=1 ~\text{ and }~t<~X\}.
\end{align}
The probabilities of the five events in (\ref{5events}) are computed below.

\begin{figure}[h] %[p] [t]
\centering\includegraphics[width=0.9\textwidth]{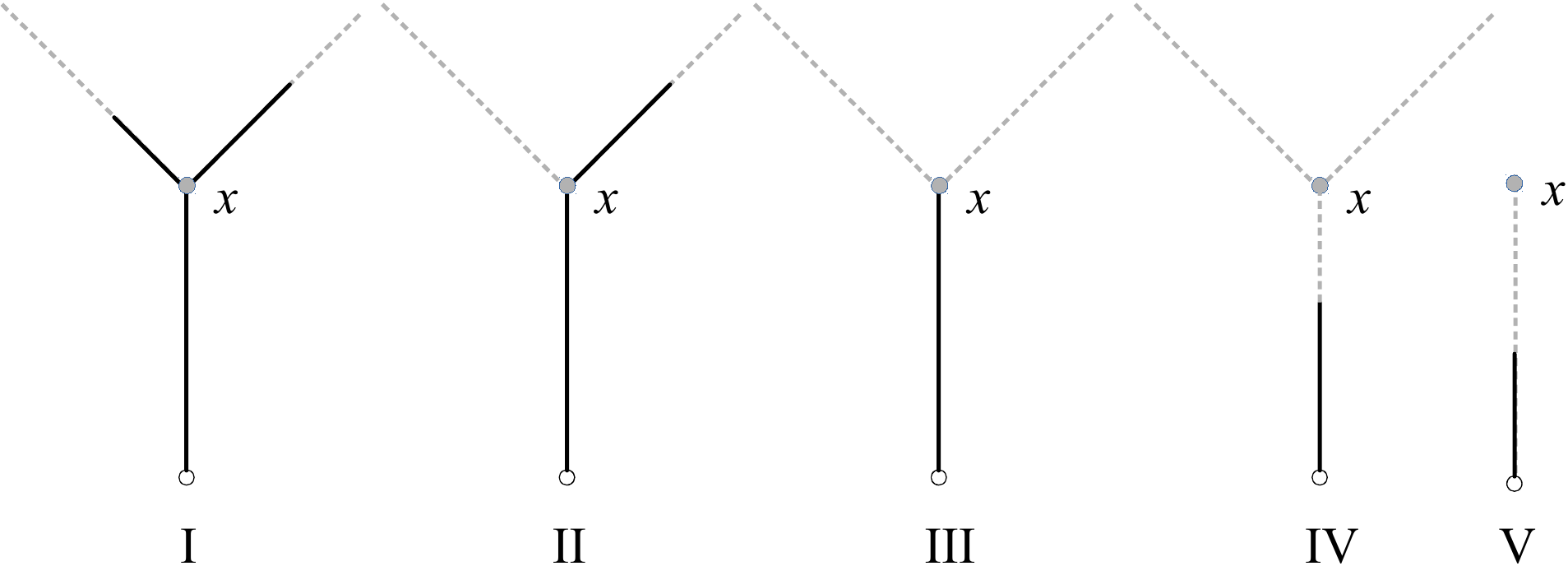}
\caption[Theorem 8: illustration]
{Subevents used in the proof of Theorem~\ref{thm:annihilation}.
Solid line depicts (a part of) pruned tree $\S(\varphi,T)$.
Dashed line depicts (a part of) initial tree $T$.}
\label{fig:thm8}
\end{figure} 

\begin{itemize}
  \item[Case I] 
  
  \begin{align}\label{eq:case1}
  {\sf P}({\sf deg}_T(x)=3 ~\text{ and }~t\leq~\h_1 \wedge \h_2) 
  & ={1 \over 2}{\sf P}(t\leq~\h_1 \wedge \h_2~|{\sf deg}_T(x)=3)\nonumber \\
  &={p_t^2 \over 2}.
  \end{align}
  
 \bigskip
 
  \item[Case II]  
  
  \begin{align}\label{eq:case2}
  {\sf P}({\sf deg}_T(x)=3 &~\text{ and }~\h_1 \wedge \h_2 \leq ~t<~ \h_1 \vee \h_2)\nonumber \\
  &={1 \over 2}{\sf P}(\h_1 \wedge \h_2 \leq ~t<~ \h_1 \vee \h_2~|{\sf deg}_T(x)=3)\nonumber \\
  &={\sf P}(\h_1 ~t<~ \h_2~|{\sf deg}_T(x)=3)\nonumber \\
  &=p_t\big(1 -p_t\big).
  \end{align}
  
 \bigskip
  
  \item[Case III] 
  
   \begin{align*}
  {\sf P}({\sf deg}_T(x)=3 &~\text{ and }~\h_1 \vee \h_2 \leq ~t<~ \h_1+\h_2) \\
  &={1 \over 2}{\sf P}(\h_1 \vee \h_2 \leq ~t<~ \h_1+\h_2~|{\sf deg}_T(x)=3) \\
  &={1 \over 2}{\sf P}(\h_1 \vee \h_2 \leq ~t~|{\sf deg}_T(x)=3) -{1 \over 2}{\sf P}(t<~ \h_1+\h_2~|{\sf deg}_T(x)=3)\\
  &={1 \over 2}\big(1 -p_t\big)^2 -{1 \over 2}F_{\h_1+\h_2}(t),
  \end{align*}
  
  where
  \[F_{\h_1+\h_2}(t):={\sf P}( \h_1+\h_2<~t~|{\sf deg}_T(x)=3)
  =\int\limits_0^t \ell \ast \ell(y) \,dy.\]
  
    The Laplace transform of $p_t$ is known to be
\begin{align*}
\mathcal{L}p(s) &= {1 \over \sqrt{(\lambda +s)^2 -\lambda^2}}+{\lambda \over \sqrt{(\lambda +s)^2 -\lambda^2}\big(\lambda +s +\sqrt{(\lambda +s)^2 -\lambda^2}\big)}\\
&={1 \over s}-{\lambda \over s\big(\lambda +s +\sqrt{(\lambda +s)^2 -\lambda^2}\big)} \quad ={1 \over s}-{1 \over s}\mathcal{L}\ell(s).
\end{align*}
Thus, by (\ref{recursionLaplace}),
\begin{equation}\label{eq:pL}
\mathcal{L}p(s)={1 \over 2s}+{1 \over \lambda}\mathcal{L}\ell(s)-{1 \over 2s}\big(\mathcal{L}\ell(s)\big)^2.
\end{equation}

Hence, the Laplace transform of $F_{\h_1+\h_2}(t)$ is
  \begin{align*}
  \mathcal{L}F_{\h_1+\h_2}(s)&={1 \over s}\int\limits_0^\infty e^{-st}\ell \ast \ell(t) \,dt={1 \over s}\big(\mathcal{L}\ell(s)\big)^2\\
  &={1 \over s}+{2 \over \lambda}\mathcal{L}\ell(s)-2\mathcal{L}p(s).
  \end{align*}

  Therefore,
  $$F_{\h_1+\h_2}(t)=1+{2 \over \lambda}\ell(t)-2p_t$$
  and
  \begin{align}\label{eq:case3}
   {\sf P}({\sf deg}_T(x)=3 ~\text{ and }~\h_1 \vee \h_2 \leq ~t<~ \h_1+\h_2) &={1 \over 2}\big(1 -p_t\big)^2 -{1 \over 2}F_{\h_1+\h_2}(t) \nonumber \\
   &= {p_t^2 \over 2}-{1  \over \lambda}\ell(t).
  \end{align}

\bigskip  
  
  \item[Case IV]  
  
  \begin{align}\label{eq:case4}
  {\sf P}({\sf deg}_T(x)=3 &~\text{ and }~\h_1 +\h_2 \leq ~t<~ X+\h_1+\h_2)\nonumber \\
  &={1 \over 2}{\sf P}(\h_1 +\h_2 \leq ~t<~ X+\h_1+\h_2~|{\sf deg}_T(x)=3)\nonumber \\
  &={1 \over 2}\int\limits_0^t e^{-\lambda(t-y)} \ell \ast \ell(y) \,dy \nonumber \\
  &={1 \over 2\lambda}\phi_\lambda \ast \ell \ast \ell(t) \quad ={1  \over \lambda}\ell(t) - {1 \over 2\lambda}\phi_\lambda(t)
  \end{align}
  by (\ref{recursionEll}).

\bigskip
  
  \item[Case V]  
  
  \begin{equation}\label{eq:case5}
  {\sf P}({\sf deg}_T(x)=1 ~\text{ and }~t<~X)={1 \over 2}{\sf P}(t<~X)={1 \over 2}e^{-\lambda t}={1 \over 2\lambda}\phi_\lambda(t).
  \end{equation}
  
\end{itemize}

Alternatively, the sum of the probabilities in cases IV and V can be computed as
$$\int\limits_0^\infty e^{-\lambda y}\ell(t)dy={1  \over \lambda}\ell(t),$$
which is consistent with (\ref{eq:case4}) and (\ref{eq:case5}).

Observe that the probabilities in (\ref{eq:case1}), (\ref{eq:case2}), (\ref{eq:case3}), (\ref{eq:case4}), and (\ref{eq:case5}) add up to
$$p_t={\sf P}\Big(\S(\varphi,T) \not= \phi\Big).$$
\bigskip
\noindent

To prove part (ii), observe that the probabilities in (\ref{eq:case4}) and (\ref{eq:case5}) add up to ${1  \over \lambda}\ell(t)$, while the probabilities in (\ref{eq:case3}), (\ref{eq:case4}), and (\ref{eq:case5}) add up to ${p_t^2 \over 2}$. 
Thus the fraction of leaves with single sink is 
${1  \over \lambda}\ell(t)\Big/ {p_t^2 \over 2}$.

By construction, each single mass at a leaf has mass $2t$. 
For a double mass, (\ref{eq:case3}) implies the following cumulative distribution function for positive $a$ and $b$ satisfying $a\vee b ~\leq 2t ~< a+b$.
\begin{align*}
F&(a,b) \\ 
&={{\sf P}\left({\sf deg}_T(x)=3,~\h_1 \leq a/2, ~\h_2 \leq b/2,~t< \h_1+\h_2~|\S(\varphi,T) \not= \phi\right) \over {p_t^2 \over 2}-{1  \over \lambda}\ell(t)}\\
&= {\int\limits_0^{a/2} {\sf P}\left(t-y < \h_2 \leq b/2~|~\S(\varphi,T) \not= \phi ~\text{ and }~{\sf deg}_T(x)=3 \right)\, \ell(y) \, dy \over p_t^2 -{2  \over \lambda}\ell(t)}\\
&= {\int\limits_0^{a/2} \Big(p_{t-y}-p_{b/2}\Big)\, \ell(y) \, dy \over p_t^2 -{2  \over \lambda}\ell(t)}\\
&= {\int\limits_0^{a/2} p_{t-y}\ell(y) \, dy  -p_{b/2}\big(1-p_{a/2}\big) \over p_t^2 -{2  \over \lambda}\ell(t)}.
\end{align*}
Differentiating, we obtain the statement of part (iii):
$$f(a,b)={\partial^2 \over \partial a \partial b}F(a,b)=
{{1 \over 4}\ell(a/2)\ell(b/2) \over p_t^2 -{2  \over \lambda}\ell(t)}.$$

\bigskip
\noindent
For part (iv) observe that by (\ref{eq:case2}),
$${\sf P}({\sf deg}_T(x)=3 ~\text{ and }~\h_1 \wedge \h_2 \leq ~t<~ \h_1 \vee \h_2 ~|\S(\varphi,T) \not= \phi)=1-p_t.$$
 Thus each edge in $\S(\varphi,T)$ is partitioned into subintervals whose lengths are independent exponential random variables with parameter $\lambda$. 
 The number of the subintervals is a geometric random variable with parameter $p_t$. 
 At every point that separates a pair of adjacent subintervals 
 there exists a mass, which can
 have either left or right orientation independently with probability $1/2$.
 
 \noindent
 Finally, for $a \in (0,2t)$, (\ref{eq:case2}) implies
 \begin{align*}
 &{{\sf P}({\sf deg}_T(x)=3 ~\text{ and }~\h_1 \wedge \h_2 \leq ~a/2 <~t<~ \h_1 \vee \h_2~|\S(\varphi,T) \not= \phi) \over p_t(1 -p_t)}\\
 &={{\sf P}(\h_1  \leq ~a/2 <~t<~ \h_2~|~\S(\varphi,T) \not= \phi ~\text{ and }~{\sf deg}_T(x)=3) \over p_t(1 -p_t)}\\
 &={p_t(1 -p_{a/2}) \over p_t(1 -p_t)} \quad ={1 -p_{a/2} \over 1-p_t}.
 \end{align*}
Next, we differentiate to obtain the p.d.f. for the mass of an interior sink, as in part (v),
$${d \over da}{1 -p_{a/2} \over 1-p_t}={\ell(a/2) \over 2(1 -p_t)}.$$

\end{proof}

%%%%%%%%%%%%%%%%%%%%%%%%%%%%%%%%%%%
\section{Random sink in an infinite exponential potential}
\label{sec:rand_mass}
Here we focus on the dynamics of a random sink
in the case of a negative exponential excursion potential.
To avoid subtle conditioning related to a finite potential from $\widetilde{\E}([a,b])$,
we consider space $\widetilde{\E}(\mathbb{R})$ of infinite potentials.
Specifically, we consider here an infinite exponential potential $\Psi^{\rm exp}_0(x)$,
$x\in\mathbb{R}$, constructed as follows.
Let $x_i$, $i\in\mathbb{Z}$ be the epochs of a Poisson point process on $\mathbb{R}$ with rate $\lambda/2$,
indexed so that $x_0$ is the epoch closest to the origin.
The initial velocity $v(x,0)$ is a piece-wise constant continuous function 
that alternates between values $\pm1$ within the intervals 
$(x_i-1,x_i]$ and with $v(x_0,0)=1$. 
Accordingly, the initial potential $\Psi^{\rm exp}_0(x)$ is a piece-wise 
linear continuous function with a local minimum at $x_0$ and alternating slopes 
$\pm1$ of independent exponential duration.
The results in this section refer to the sink $\cM_0$ with 
initial Lagrangian coordinate $x_0$.
We refer to $\cM_0$ as a {\it random sink}, using
translation invariance of Poisson point process.

\begin{figure}[h]
	\centering
	\includegraphics[width=0.7 \textwidth]{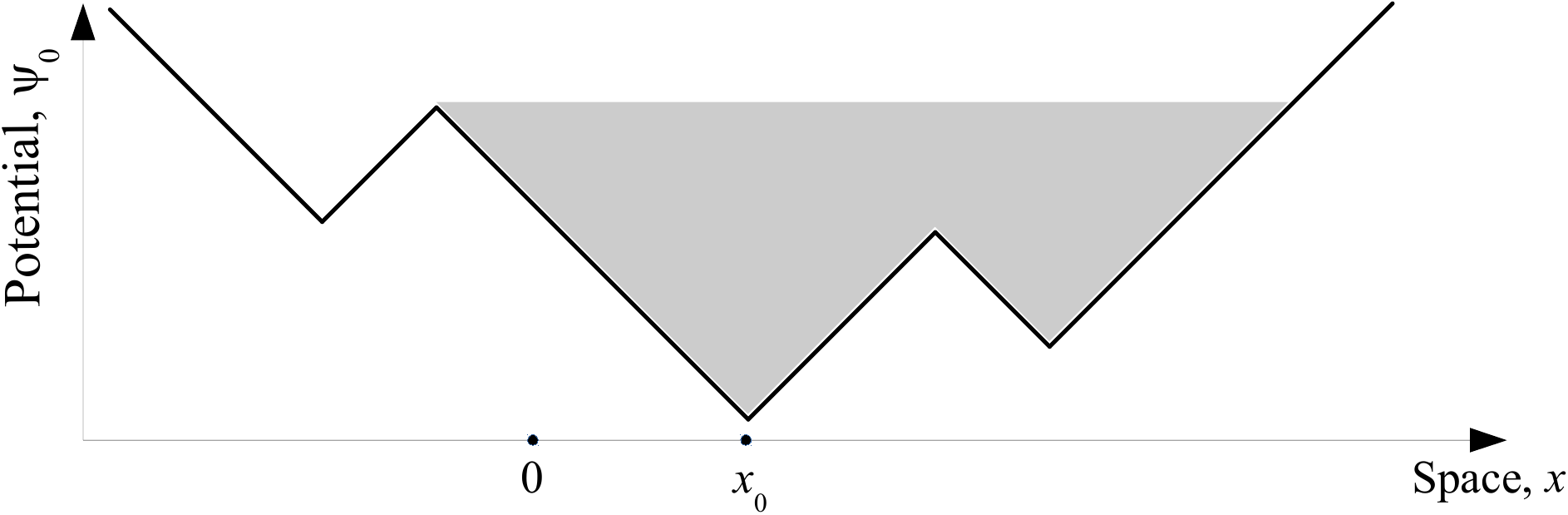}
	\caption{Random sink $\cM_0$. 
	The particle $\cM_0$ originates at point $x_0$ -- the local minimum 
	closest to the origin.
	Its dynamics during the time interval $[0,t]$ is completely specified 
	by the finite negative excursion
	$\cB_0^t$ highlighted in the figure.}
	\label{fig:rand_mass}
\end{figure}

Observe that for any fixed $t>0$, the dynamics of $\cM_0$ is completely
specified by a finite excursion within $\Psi^{\rm exp}_0(x)$.
For instance, one can consider the shortest negative excursion of $\Psi^{\rm exp}_0(x)$
within interval $\cB_0^t$ such that
$x_0\in\cB_0^t$, $|\cB_0^t|>2t$, and one end of $\cB_0^t$ is a local
maximum of $\Psi^{\rm exp}_0(x)$ (see Fig.~\ref{fig:rand_mass}).
The respective Harris path is an exponential Galton-Watson tree ${\sf GW}(\lambda)$.  
Lemma~\ref{lem:SWT} implies that
the dynamics of $\cM_0$ consists of alternating
intervals of mass accumulation (vertical segments of $\cG^{(x,\psi)}$) and
motion (horizontal segments of $\cG^{(x,\psi)}$), starting with a mass accumulation
interval.
Label the lengths $\v_i$ of the vertical segments and the lengths $\h_i$ of the
horizontal segments in the order of appearance in the 
examined trajectory. 
Corollary~\ref{cor:GW} implies that $\v_i,\h_i$ are independent;
the lengths of $\v_i$ are independent
identically distributed exponential random variables with parameter $\lambda$;
and the lengths of $\h_i$ equal the total lengths of independent
Galton-Watson trees ${\sf GW}\left(\lambda\right)$.
This description, illustrated in Fig.~\ref{fig:mass}, allows us to find
the mass dynamics of a random sink.

\begin{figure}[h]
	\centering
	\includegraphics[width=0.7 \textwidth]{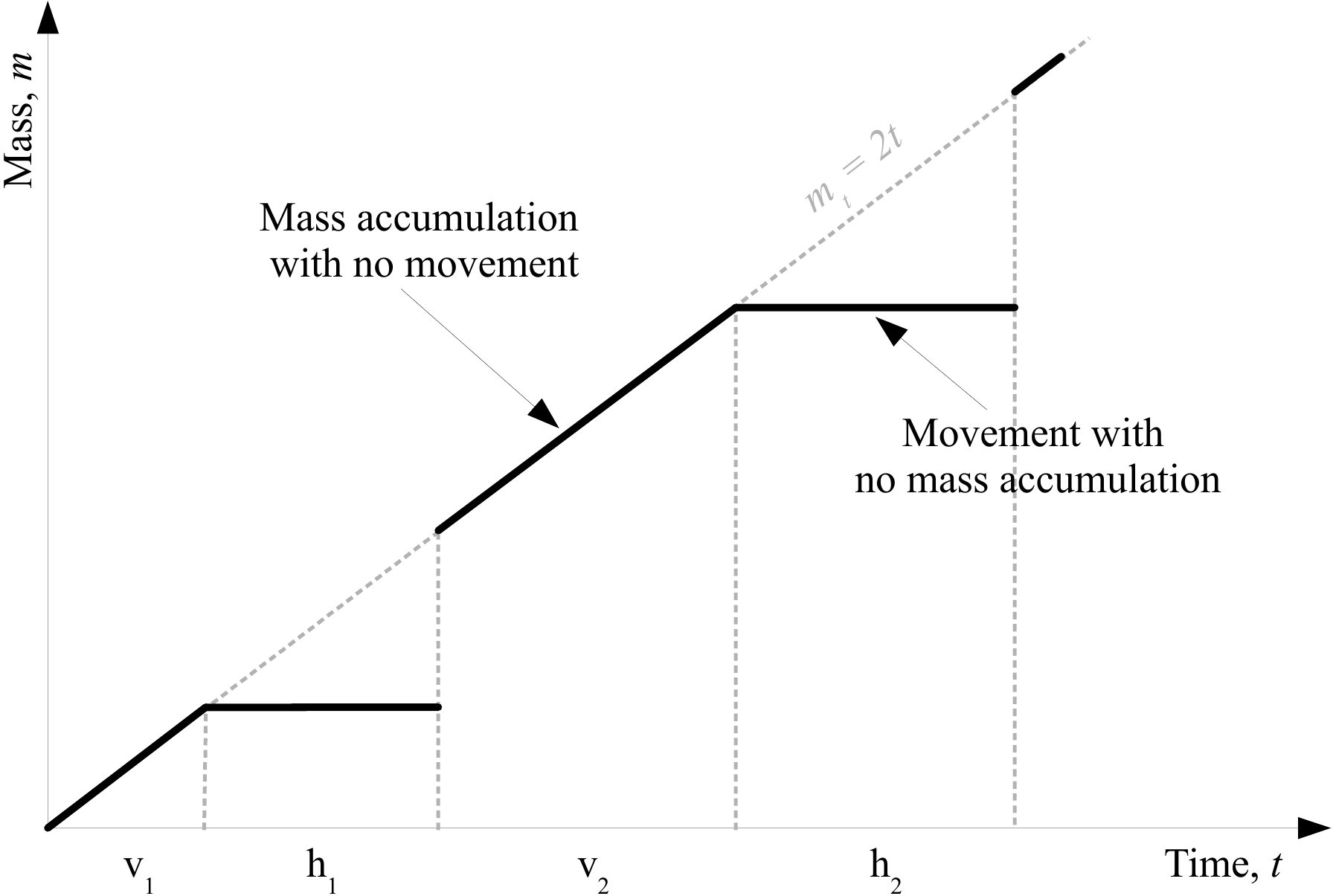}
	\caption{Dynamics of a random sink: an illustration. 
	The trajectory of a sink is partitioned into alternating 
	intervals of mass accumulation of duration $\v_i$ and intervals 
	of movement with no mass accumulation of duration $\h_i$. 
	Each $\v_i$ is an exponential random variable with parameter $\lambda$.
	Each $\h_i$ is distributed as the total length of a critical
	Galton-Watson tree with exponential edge lengths with parameter $\lambda$. }
	\label{fig:mass}
\end{figure}

\begin{thm}[{{\bf Growth probability of a random sink}}]
\label{thm:rand_mass}
The probability $\xi(t)$ that a random sink $\cM_0$ is growing at a given 
instant $t>0$ (that is, it is at rest and accumulates mass) is given by 
\begin{equation}\label{eq:growth2}
\xi(t)=e^{-\lambda t}I_0(\lambda t).
\end{equation}
\end{thm}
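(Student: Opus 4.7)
The plan is to compute $\xi(t)$ via its Laplace transform, using the decomposition of the sink trajectory into alternating intervals $\v_i$ (rest/mass accumulation) and $\h_i$ (motion) described just before the theorem statement. By the discussion preceding the theorem (via Corollary~\ref{cor:GW} and Lemma~\ref{lem:SWT}), the $\v_i$ are i.i.d.\ exponential with parameter $\lambda$ and the $\h_i$ are i.i.d.\ with density $\ell$ from Lemma~\ref{lem:ell}, and the two sequences are independent. Setting $V_1=0$ and $V_{k+1}=V_k+\v_k+\h_k$, the event that $\cM_0$ is growing at time $t$ is the disjoint union over $k\ge 1$ of the events $\{V_k\le t<V_k+\v_k\}$, so
\[
\xi(t)=\sum_{k=1}^{\infty}{\sf P}(V_k\le t<V_k+\v_k)=\sum_{k=1}^{\infty}\bigl(f_{V_k}\ast \bar\Phi_\lambda\bigr)(t),
\]
where $\bar\Phi_\lambda(u)=e^{-\lambda u}$ is the exponential survival function.

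First I would apply the Laplace transform term by term. Independence gives $\widehat{f_{V_k}}(s)=\bigl(\tfrac{\lambda}{\lambda+s}\,\mathcal L\ell(s)\bigr)^{k-1}$, and $\mathcal L\bar\Phi_\lambda(s)=\tfrac{1}{\lambda+s}$. Summing the resulting geometric series gives
\[
\mathcal L\xi(s)=\frac{1}{\lambda+s}\sum_{k=0}^\infty\left(\frac{\lambda\,\mathcal L\ell(s)}{\lambda+s}\right)^k=\frac{1}{\lambda+s-\lambda\,\mathcal L\ell(s)}.
\]
Next, I would simplify using the explicit form \eqref{eq:LaplaceL}, namely $\mathcal L\ell(s)=\lambda/\bigl(\lambda+s+\sqrt{(\lambda+s)^2-\lambda^2}\bigr)$. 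Writing $u=\lambda+s$ and $w=\sqrt{u^2-\lambda^2}$, the denominator becomes
\[
u-\frac{\lambda^2}{u+w}=\frac{u^2+uw-\lambda^2}{u+w}=\frac{w^2+uw}{u+w}=w,
\]
so
\[
\mathcal L\xi(s)=\frac{1}{\sqrt{(\lambda+s)^2-\lambda^2}}.
\]
Finally, I would invert using the standard identity $\mathcal L\{I_0(\lambda t)\}(s)=1/\sqrt{s^2-\lambda^2}$ together with the shift $\mathcal L\{e^{-\lambda t}f(t)\}(s)=\mathcal L f(s+\lambda)$, which immediately yields $\xi(t)=e^{-\lambda t}I_0(\lambda t)$.

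The only non-routine point is the justification of the trajectory decomposition itself: one must verify that the dynamics of the particular sink originating at $x_0$ on the full line is indistinguishable, up to time $t$, from the dynamics governed by the finite excursion $\cB_0^t$, so that the lengths $\v_i,\h_i$ are precisely the vertical and horizontal segments of the graphical shock tree $\cG^{(x,\psi)}$ of a ${\sf GW}(\lambda)$ tree as described in Lemma~\ref{lem:SWT}. This follows from the fact that $\cB_0^t$ is chosen to fully enclose the ancestral subtree of $\cM_0$ up to time $t$, hence nothing outside $\cB_0^t$ can affect $\cM_0$ by time $t$; the i.i.d.\ exponential structure of $\v_i$ and the i.i.d.\ ${\sf GW}(\lambda)$-length structure of $\h_i$ then follow from Corollary~\ref{cor:GW} and Lemma~\ref{lem:SWT}(d). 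Everything after that is the Laplace-transform computation above.
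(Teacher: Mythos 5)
Your proposal is correct and follows essentially the same route as the paper: the same decomposition of the growth event into $\{V_k\le t<V_k+\v_k\}$, the same geometric-series Laplace transform computation yielding $\mathcal L\xi(s)=1/\sqrt{(\lambda+s)^2-\lambda^2}$, and the same inversion to $e^{-\lambda t}I_0(\lambda t)$ (the paper cites formula 29.3.93 of Abramowitz--Stegun where you invoke the standard $I_0$ transform plus the shift rule). Your added remark justifying the reduction to the finite excursion $\cB_0^t$ is a point the paper handles in the discussion preceding the theorem rather than in the proof itself.
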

\begin{proof}
Let $\v_i$, $i\ge 1$ be independent exponential random variables with 
parameter $\lambda$, and
$\h_i$, $i\ge 1$ be the total lengths of independent ${\sf GW}\left(\lambda\right)$ trees.
The sum $\v_1+\dots+\v_k$ has the gamma density $\gamma_{\lambda,k}(x)={1 \over (k-1)!}\lambda (\lambda x)^{k-1}e^{-x}~$.
The probability $\xi(t)$ that a random sink is growing  
at a given instant $t> 0$ is
\begin{align} \label{eq:growth}
\xi(t) &:= {\sf P}(\text{ a random sink is growing at instant }t~) \nonumber \\
& = \sum\limits_{k=1}^\infty {\sf P}\left(\sum\limits_{i=1}^{k-1} [\v_i+\h_i] < t <\v_k+\sum\limits_{i=1}^{k-1} [\v_i+\h_i] \right) \nonumber \\
& = \sum\limits_{k=1}^\infty \int\limits_0^t \left(~\int\limits_{t-x}^\infty \lambda e^{-\lambda y} dy \right)  \gamma_{k-1}\ast \ell_{k-1}(x) dx \nonumber \\
& = \sum\limits_{k=1}^\infty \int\limits_0^t e^{-\lambda (t-x)}  \gamma_{k-1}\ast \ell_{k-1}(x) dx  ~~= {1 \over \lambda} \sum\limits_{k=1}^\infty \gamma_k\ast \ell_{k-1}(t),
\end{align}
where $$\ell_k(x)=\underbrace{\ell \ast  \hdots \ast \ell}_{k \text{ times}}(x).$$

\medskip
\noindent
We calculate the Laplace transform $\mathcal{L}\xi(s)$ of the probability $\xi(t)$ in (\ref{eq:growth}) as follows. We use the formula for the Laplace transform of $\ell(x)$ derived in (\ref{eq:LaplaceL}) and (\ref{eq:growth}) to obtain
\begin{align}\label{eq:LTvarphi}
\mathcal{L}\xi(s) &= {1 \over \lambda} \sum\limits_{k=1}^\infty \left({\lambda \over \lambda +s}\right)^k  \Big(\mathcal{L}\ell(s)\Big)^{k-1}={1 \over \lambda +s-\lambda \mathcal{L}\ell(s)} \nonumber\\
&={1 \over \lambda +s-{\lambda^2 \over \lambda + s+\sqrt{(\lambda +s)^2 -\lambda^2}}} ={1 \over \sqrt{(\lambda +s)^2 -\lambda^2}}.
\end{align}
Finally, we use formula 29.3.93 in \cite{AS1964} to invert the Laplace transform in (\ref{eq:LTvarphi}), and obtain
\[\xi(t)=e^{-\lambda t}I_0(\lambda t).\]
\end{proof}

\begin{thm}[{{\bf Mass distribution of a random sink}}]
\label{thm:mass}
The mass of a random sink $\cM_0$ at instant $t>0$ has
probability distribution
\begin{align}
\label{mass}
\mu_t(a)
= {\bf 1}_{(0,2t)}(a) &\cdot {\lambda \over 2} e^{-\lambda t}\Big[ I_0\big(\lambda (t-a/2)\big) + I_1\big(\lambda (t-a/2)\big) \Big] \cdot I_0(\lambda a/2)\nonumber\\
&+e^{-\lambda t}I_0(\lambda t) \delta_{2t}(a),
\end{align}
where $\delta_{2t}$ denotes Dirac delta function (point mass) at $2t$.
\end{thm}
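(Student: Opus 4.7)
The plan is to split the mass distribution $\mu_t$ according to whether the sink $\cM_0$ is at rest or in motion at time $t$. Recall from the discussion preceding Theorem~\ref{thm:rand_mass} that the trajectory of $\cM_0$ alternates through i.i.d.\ vertical lengths $\v_i \sim \mathrm{Exp}(\lambda)$ and i.i.d.\ horizontal lengths $\h_i$, each distributed as the total length of an independent ${\sf GW}(\lambda)$ tree, with the two families mutually independent. At time $t$, the sink occupies either the $k$-th vertical segment $\v_k$ at some position $u \in [0,\v_k)$, or the $k$-th horizontal segment $\h_k$ at some position $u \in [0,\h_k)$, for a unique index $k \ge 1$. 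By Lemma~\ref{lem:SWT}(c) the sink's mass equals twice the total vertical length in the subtree descendant from the sink's position, and by Lemma~\ref{lem:SWT}(d) each sibling subtree attached at a trajectory vertex contributes a vertical length equal to the adjacent horizontal segment.

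In the growing case, with $z$ located on $\v_k$ at position $u$ (so $t = \sum_{i=1}^{k-1}(\v_i+\h_i)+u$), the descendant vertical content splits as $u + \sum_{i=1}^{k-1}\v_i$ along the trajectory plus $\sum_{i=1}^{k-1}\h_i$ from the $k-1$ sibling subtrees below $z$, and these sum to $t$. Hence the mass is identically $2t$, independent of $k$ and $u$. Since the probability of being on some vertical segment at time $t$ equals $\xi(t)=e^{-\lambda t}I_0(\lambda t)$ by Theorem~\ref{thm:rand_mass}, this case contributes precisely the singular term $e^{-\lambda t}I_0(\lambda t)\,\delta_{2t}$ in \eqref{mass}.

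In the moving case, with $z$ on $\h_k$ at position $u$, the analogous accounting gives a vertical content of $\sum_{i=1}^{k}\v_i + \sum_{i=1}^{k-1}\h_i = t-u$, so the mass is $a = 2(t-u) \in (0,2t)$. By independence, the event $\{\text{on }\h_k,\text{ position}\in du\}$ carries density $(\gamma_{\lambda,k}\ast\ell_{k-1})(t-u)\,p_u$, where $p_u = P(\h_k>u)=e^{-\lambda u}[I_0(\lambda u)+I_1(\lambda u)]$ by Theorem~\ref{pdelta}(a) and $\gamma_{\lambda,k}\ast\ell_{k-1}$ is the density of $\sum_{i=1}^{k}\v_i + \sum_{i=1}^{k-1}\h_i$. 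Summing over $k$ via the identity $\sum_{k=1}^{\infty}(\gamma_{\lambda,k}\ast\ell_{k-1})(s)=\lambda\xi(s)=\lambda e^{-\lambda s}I_0(\lambda s)$, which is essentially the content of the calculation in the proof of Theorem~\ref{thm:rand_mass}, and then changing variables from $u$ to $a$ with Jacobian $1/2$, I recover the continuous density in \eqref{mass}.

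The main technical obstacle is not the series summation, which is a recycled identity from Theorem~\ref{thm:rand_mass}, but rather the geometric accounting of the mass as twice the vertical content of a descendant subtree. The key claim, that the growing mass is always $2t$ regardless of how many cycles have elapsed, rests on a careful application of Lemma~\ref{lem:SWT}(c)--(d) and the underlying $\v$/$\h$ symmetry of Lemma~\ref{lem:symmetry}; once it is in place, the moving case is a routine renewal-type density computation.
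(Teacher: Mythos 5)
Your proposal is correct and follows essentially the same route as the paper's proof: the same decomposition into the growing case (atom $\xi(t)\,\delta_{2t}$ via Theorem~\ref{thm:rand_mass}) and the moving case handled by a renewal sum over the cycle index $k$, using the identity $\sum_{k}\gamma_k\ast\ell_{k-1}=\lambda\xi$ together with $p_u$ from \eqref{eq:pDelta1}. The only cosmetic difference is that you compute the density directly in the position variable $u$ along $\h_k$ and change variables to $a=2(t-u)$, whereas the paper writes down ${\sf P}(m_t\le a)$ and differentiates in $a$; the two calculations are identical term by term.
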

\begin{proof}
Let $m_t$ denote the mass of a random sink at a fixed instant $t>0$. 
When the sink is not growing, its mass $m_t$ is strictly smaller than $2t$. 
Then for any positive $a < 2t$,
\begin{align*} %\label{eq:growth3}
{\sf P}(m_t \leq a ) & = \sum\limits_{k=1}^\infty {\sf P}\left( -\h_k+\sum\limits_{i=1}^{k} [\v_i+\h_i] ~\leq {a \over 2} < t < \sum\limits_{i=1}^{k} [\v_i+\h_i]\right) \nonumber \\
& = \sum\limits_{k=1}^\infty \int\limits_0^{a/2} \left(~\int\limits_{t-x}^\infty \ell(y)dy \right)  \gamma_k\ast \ell_{k-1}(x) dx,
\end{align*}
and the corresponding density will be
\begin{align*}
{d \over da} &{\sf P}(m_t \leq a ) ={1 \over 2}\int\limits_{t-a/2}^\infty \ell(y)dy \cdot \sum\limits_{k=1}^\infty \gamma_k\ast \ell_{k-1}(a/2)\\
&={\lambda \over 2}\int\limits_{t-a/2}^\infty \ell(y)dy \cdot \xi(a/2)\\
&={\lambda \over 2} e^{-\lambda (t-a/2)}\Big[ I_0\big(\lambda (t-a/2)\big)+ I_1\big(\lambda (t-a/2)\big) \Big] \cdot e^{-\lambda a/2}I_0(\lambda a/2)\\
&={\lambda \over 2} e^{-\lambda t}\Big[ I_0\big(\lambda (t-a/2)\big) + I_1\big(\lambda (t-a/2)\big) \Big] \cdot I_0(\lambda a/2)
\end{align*}
by (\ref{eq:pDelta1}) and (\ref{eq:growth2}).
Thus, the distribution of the mass of a random sink at instant $t$ is given by
\begin{align*}
\mu_t(a)={\bf 1}_{(0,2t)}(a) & \cdot {d \over da} {\sf P}(m_t \leq a )+\xi(t) \delta_{2t}(a)\\
= {\bf 1}_{(0,2t)}(a) &\cdot {\lambda \over 2} e^{-\lambda t}\Big[ I_0\big(\lambda (t-a/2)\big) + I_1\big(\lambda (t-a/2)\big) \Big] \cdot I_0(\lambda a/2)\\ 
&+e^{-\lambda t}I_0(\lambda t) \delta_{2t}(a),
\end{align*}
where $\delta_{2t}$ denotes Dirac delta function (point mass) at $2t$.
\end{proof}

\section{Discussion}
\label{sec:discussion}
This paper introduces a generalized dynamical pruning of binary rooted trees
(Section~\ref{sec:pruning}) that encompasses several pruning operations discussed
in the probability literature, notably including the tree erasure from leaves at 
a constant rate of Example~\ref{ex:height}
\cite{Neveu86,Evans2005,Winkel2012} 
and Horton pruning of Example~\ref{ex:H} \cite{Pec95,BWW00,KZ18}.
Curiously, these two examples seem to be the only cases when the pruning 
satisfies the semigroup property (either in discrete or continuous time).
The other natural pruning operations, like pruning by the total tree length  
(Example~\ref{ex:L}) or by the number of leaves (Example~\ref{ex:numL}),
do not have the semigroup property.
The absence of semigroup property is related to the existence of discontinuities of a respective 
pruning function $\varphi(T)$ along a tree $T\in\L$.
It would be interesting to find the necessary and sufficient conditions on $\varphi(T)$
for the existence or absence of the semigroup property.

The presented results naturally complement an existing modeling framework for 
finite binary self similar trees \cite{KZ17ahp,KZ18a,KZ18}; and are tailored for 
a particular application considered in this work (Sect.~\ref{sec:annihilation}).
However, the generalized dynamical pruning is readily applicable to more 
general $\mathbb{R}$-trees on uncountable spaces as discussed in Sect.~\ref{sec:Rtree}.
For instance, continuum ballistic annihilation with a general continuous potential is a natural 
object to be studies in a real tree framework.

Measures invariant with respect to the generalized pruning (Section~\ref{sec:pi},
Definition~\ref{def:pi}) seem to be abundant on $\L$.
A natural example is a critical binary Galton-Watson tree ${\sf GW}(\lambda)$ 
with i.i.d. exponential edge lengths,
a traditional subject of invariance studies, that is shown here to be prune invariant
under an arbitrary choice of the pruning function 
(Section~\ref{sec:PI}, Theorem~\ref{main}).
The work \cite{KZ18} shows how to construct a variety of measures invariant
with respect to the Horton pruning (on combinatorial trees); 
it is very likely that this approach, modified to include edge lengths, can be 
used for generating measures invariant with respect to
other versions of the generalized dynamical pruning.
An interesting problem is finding measures invariant with respect to
multiple versions of pruning.
At the moment the only known solution is the exponential 
critical binary Galton-Watson tree ${\sf GW}(\lambda)$, invariant with respect to all admissible prunings.
It seems that a family of critical Tokunaga trees, which is shown in \cite{KZ18}
to be invariant with respect to the Horton pruning, is a promising candidate 
to be invariant with respect to other prunings.

Pruning might play a role in the analysis of dynamical systems, including the
problem of finding self-similar or time-invariant solutions. 
As a motivational example, we show here (Section~\ref{sec:annihilation})
that the dynamics of a ballistic annihilation model $A+A\to\zeroslash$, well known
in the physics literature, is equivalent to the generalized dynamical pruning 
of a level set tree representation of the model potential (Section~\ref{sec:Bdyn}, Theorem~\ref{thm:pruning}).
This tree-based representation of the model dynamics opens a way for a complete probabilistic
description of model solutions (Section~\ref{sec:BDEE}, Theorem~\ref{thm:annihilation}),
and finding the time evolution of selected statistics (Section~\ref{sec:rand_mass},
Theorem~\ref{thm:rand_mass}).
It seems promising to expand the proposed analysis to other initial potentials,
as well as to other particle systems known to be critically dependent on the shock 
dynamics.
We observe, for instance, that the dynamics of particles in the ballistic annihilation 
model, prior to particle collision, coincides with that in the famous 1-D inviscid Burgers equation 
$$\partial_t v(x,t)+v \partial_x v(x,t)=0,\quad 
x\in\mathbb{R}, t\in\mathbb{R}_+,$$
which also describes the evolution of a velocity field $v(x,t)$.
The Burgers dynamics appears in a surprising variety of problems, ranging from cosmology 
to fluid dynamics and vehicle traffic models; see \cite{BK07,FrischBec,Gurbatov}
for comprehensive review.
The Burgers equation produces 
shock waves that correspond to the appearance of massive particles and can
be represented as trees (analogously to the shock waves of sinks considered in this work).
It would be interesting to explore if there exists a manageable relation 
between the initial Burgers velocity field $v(x,0)$ and the respective shock wave tree,
at least for special initial velocity fields, and if such trees
are prune invariant for a suitable pruning function $\varphi(T)$.

\section*{Acknowledgements}
We are grateful to Maxim Arnold for numerous discussions related 
to this work and to
Ed Waymire for his continuing support and encouragement. 
This research is supported by the NSF awards DMS-1412557 (Y.K.) and EAR-1723033 (I.Z.).

%++++++++++++++++++++++++++++++++++++++++++++++++++++++++++++++++++++++++++++++++++++++++++++


\begin{thebibliography}{10}
\bibitem{AJH12}
R.~Abraham, J.-F. Delmas, H. He,
\newblock{\em{Pruning Galton-Watson trees and tree-valued Markov processes}}
\newblock Ann. Inst. H. Poincar\'{e} Probab. Statist., {\bf 48}(3) (2012) 688--705. 

\bibitem{AS1964}
M.~Abramowitz and I.~A.~ Stegun,
\newblock{\em {Handbook of mathematical functions: with formulas, graphs, and mathematical tables}}
\newblock Courier Corporation, {\bf 55} (1964).

\bibitem{AP98}
D.J.~Aldous, J. Pitman,
\newblock{\em{Tree-valued Markov chains derived from Galton-Watson processes}} 
\newblock Ann. Inst. H. Poincar\'{e} Probab. Statist., {\bf 34}(5) (1998) 637--686.

\bibitem{BK07}
J.~Bec and K.~Khanin,
\newblock{\em Burgers turbulence}
\newblock Physics Reports, {\bf 447}(1) (2007) 1--66.

\bibitem{BNRL93}
E. Ben-Naim, S. Redner, and F. Leyvraz,
\newblock{\em Decay kinetics of ballistic annihilation}
\newblock Physical Review Letters, {\bf 70}(12) (1993) 1890--1893.

\bibitem{BNRK96}
E. Ben-Naim, S. Redner, and P. L. Krapivsky,
\newblock{\em Two scales in asynchronous ballistic annihilation}
\newblock J. Phys. A: Math. Gen., {\bf 29} L561 (1996).

\bibitem{Belitsky1995}
V.~Belitsky and P.\,A. ~Ferrari,
\newblock{\em Ballistic annihilation and deterministic surface growth}
\newblock Journal of Statistical Physics, {\bf 80}(3-4) (1995) 517--543.

\bibitem{Blythe2000}
R.\,A. ~Blythe, M.\,R. ~Evans, and Y. ~Kafri,
\newblock{\em Stochastic ballistic annihilation and coalescence}
\newblock Physical Review Letters, {\bf 85}(18) (2000) 3750--3753.

\bibitem{BWW00}
G.\,A.~Burd, E.\,C. ~Waymire and R.\,D. ~Winn,
\newblock{\em A self-similar invariance of critical binary Galton-Watson trees}
\newblock Bernoulli, {\bf 6} (2000) 1--21.

\bibitem{Winkel2012}
T. ~Duquesne and M. ~Winkel,
\newblock{\em Hereditary tree growth and Levy forests}
\newblock arXiv:1211.2179 (2012) preprint.

\bibitem{DR99}
P. S. Dodds, D. H. Rothman, 
\newblock{\em Unified view of scaling laws for river networks} 
\newblock Phys. Rev. E, {\bf 59}(5) (1999) 4865.

\bibitem{Droz95}
M.~Droz, P.-A. ~Rey, L. ~Frachebourg, and J. ~Piasecki,
\newblock{\em Ballistic-annihilation kinetics for a multivelocity one-dimensional ideal gas}
\newblock Phys. Rev. E  {\bf 51}(6) (1995) 5541--5548.

\bibitem{EF85}
Y.~Elskens and H. L. Frisch,
\newblock{\em Annihilation kinetics in the one-dimensional ideal gas}
\newblock Physical Review A, {\bf 31}(6) (1985) 3812--3816.

\bibitem{Ermakov1998}
A.~Ermakov, B. Toth, and W. ~Werner,
\newblock{\em On some annihilating and coalescing systems}
\newblock Journal of Statistical Physics, {\bf 91}(5-6) (1998) 845--870.

\bibitem{Evans2005}
S.~N. ~Evans,
\newblock{\em Probability and real trees: Ecole d'\'{e}t\'{e} de probabilités de Saint-Flour}
\newblock {\textit Lectures on Probability Theory and Statistics}. Springer (2008).

\bibitem{EPW06}
S. N. Evans, J. Pitman, and A. Winter, 
\newblock{\em Rayleigh processes, real trees, and root growth with re-grafting}
\newblock Probability Theory and Related Fields {\bf 134}(1) (2006) 81--126.

\bibitem{FRV79}
P.~Flajolet, J.-C. Raoult, J. Vuillemin, 
\newblock{\em The number of registers required for evaluating arithmetic expressions} 
\newblock Theoretical Computer Science {\bf 9}(1) (1979) 99--125.

\bibitem{FrischBec}
U.~Frisch, J.~Bec, and B.~Villone,
\newblock{\em Singularities and the distribution of density in the Burgers/adhesion model }
\newblock Physica D, {\bf 152/153} (2001) 620--635.

\bibitem{Gurbatov}
S.~Gurbatov, A.~Malakhov, and A.~Saichev,
\newblock{ \em{ Nonlinear random waves and turbulence in nondispersive media: waves, rays, 
particles}}
\newblock{Manchester University Press, Manchester, (1991)}

\bibitem{Harris}
T. E. Harris,
\newblock{\em{First passage and recurrence distribution}}
Trans. Amer. Math. Soc., 
{\bf 73} (1952) 471--486.

\bibitem{KH97}
A.~Katok and B. ~Hasselblatt, 
\newblock{\em Introduction to the modern theory of dynamical systems} 
\newblock Vol. 54. Cambridge university press (1997).

\bibitem{Kirchner93}
J. W. ~Kirchner, 
\newblock{\em Statistical inevitability of Horton's laws and the apparent randomness of stream channel networks} 
\newblock Geology, {\bf 21}(7) (1993) 591--594.

\bibitem{KZ16}
Y.~Kovchegov and I.~Zaliapin,
\newblock{\em{Horton Law in Self-Similar Trees}}
\newblock  Fractals, Vol. 24, No. 2 (2016) 1650017.

\bibitem{KZ17ahp}
Y.~Kovchegov and I.~Zaliapin,
\newblock{\em{Horton self-similarity of Kingman's coalescent tree}}
\newblock  Ann. Inst. H. Poincar\'{e} Probab. Statist., {\bf 53}(3) (2017) 1069--1107.

\bibitem{KZ18a}
Y.~Kovchegov and I. ~Zaliapin, 
\newblock{\em{Tokunaga self-similarity arises naturally from time invariance}} 
\newblock Chaos: An Interdisciplinary Journal of Nonlinear Science, 28(4) (2018) 041102.

\bibitem{KZ18}
Y.~Kovchegov and I. ~Zaliapin,
\newblock{\em{Random self-similar trees and a hierarchical branching process}}
\newblock  Stochastic Processes and Their Applications, to appear (2018) arXiv:1608.05032.

\bibitem{KRBN2010}
P. L. Krapivsky, S. Redner, and E. Ben-Naim, 
\newblock{\em A kinetic view of statistical physics}
\newblock Cambridge University Press (2010).

\bibitem{LeGall93}
J. F. Le Gall,
\newblock{\em {The uniform random tree in a Brownian excursion}}
\newblock Probab. Theory Relat. Fields, 
{\bf 96} (1993) 369--383.

\bibitem{Neveu86}
J.~Neveu, 
\newblock {\em Erasing a branching tree} 
\newblock Advances in applied probability, {\bf 1} (1986) 101--108.

\bibitem{NP}
J.~Neveu and J. Pitman,
\newblock{\em {Renewal property of the extrema and tree property of the excursion of a one-dimensional Brownian motion}}
S\'eminaire de Probabilit\'es XXIII, 
{\bf 1372} of the series Lecture Notes in Mathematics, (1989) 239--247.

\bibitem{Pec95}
S. D. ~Peckham,
\newblock{\em New results for self-similar trees with applications to river networks}
\newblock Water Resources Res. {\bf 31} (1995) 1023--1029.

\bibitem{Piasecki95}
J. ~Piasecki,
\newblock{\em Ballistic annihilation in a one-dimensional fluid}
\newblock Phys.Rev. E  {\bf 51}(6) (1995) 5535--5540.

\bibitem{Pitman}
J.~Pitman,
\newblock{\em Combinatorial Stochastic Processes: 
Ecole d'\'{e}t\'{e} de probabilit{\'e}s de Saint-Flour XXXII-2002}
\newblock {\textit Lectures on Probability Theory and Statistics}.  Springer.  
Available at http://stat.berkeley.edu/users/pitman/621.pdf, (2006).

\bibitem{Sidoravicius2017}
V.~Sidoravicius and L. ~Tournier,
\newblock{\em Note on a one-dimensional system of annihilating particles}
\newblock Electron. Commun. Probab., {\bf 22}(59) (2017) 1--9.

\bibitem{Tarboton96}
D. G. ~Tarboton,
\newblock{\em Fractal river networks, Horton's laws and Tokunaga cyclicity}
\newblock Journal of hydrology, {\bf 187}(1) (1996) 105--117.

\bibitem{ZK12}
I.~Zaliapin and Y. Kovchegov,
\newblock{\em {Tokunaga and Horton self-similarity for level set trees of Markov chains}}
\newblock Chaos, Solitons $\And$ Fractals, {\bf 45}(3) (2012) 358--372.

\bibitem{ZZF13}
S. Zanardo, I. Zaliapin, and E. Foufoula-Georgiou,
\newblock{\em Are American rivers Tokunaga self-similar? 
New results on fluvial network topology and its climatic dependence} 
\newblock J. Geophys. Res.,  {\bf 118} (2013) 166--183.
 
\end{thebibliography}
\end{document}